\newtheorem{theorem}{Theorem}[section]
\newtheorem{lemma}{Lemma}[section]
\numberwithin{equation}{section}
\begin{document}

\title{Extremal Polynomials and Entire Functions of Exponential Type}
\author{Michael Revers}
\maketitle

\begin{abstract}
In this paper, we discuss asymptotic relations for the approximation of
$\left\vert x\right\vert ^{\alpha},\alpha>0$ in $L_{\infty}\left[
-1,1\right] $ by Lagrange interpolation polynomials based on the zeros of the
Chebyshev polynomials of first kind. The limiting process reveals an entire
function of exponential type for which we can present an explicit formula. As
a consequence, we further deduce an asymptotic relation for the approximation
error when $\alpha\rightarrow\infty$. Finally, we present connections of our
results together with some recent work of Ganzburg \cite{Ganzburg1} and Lubinsky \cite{Lubinsky1}, by presenting
numerical results, indicating a possible constructive way towards a
representation for the Bernstein constants.

\textbf{MSC Classification (2010):} 41A05, 41A10, 41A60, 65D05

\textbf{Keywords:} Bernstein constant, Chebyshev nodes, Entire functions of
exponential type, Higher order asymptotics, Watson lemma, Best uniform approximation.

\end{abstract}

\section{The Bernstein Constant}

Let $\alpha>0$ be not an even integer. Starting in year 1913 for the case
$\alpha=1$, and later in 1938 for the general case $\alpha>0$, S.N. Bernstein
\cite{Bernstein1}, \cite{Bernstein3} established the limit%
\[
\Delta_{\infty,\alpha}=\lim_{n\rightarrow\infty}n^{\alpha}E_{n}\left(
\left\vert x\right\vert ^{\alpha},L_{\infty}\left[ -1,1\right] \right) ,
\]
where%
\[
E_{n}\left( f,L_{p}\left[ a,b\right] \right) =\inf\left\{ \left\Vert
f-p\right\Vert _{L_{p}\left[ a,b\right] }:\deg\left( p\right) \leq
n\right\}
\]
denotes the error in best $L_{p}$ approximation of a function $f$ on the
interval $\left[ a,b\right] $ by polynomials of degree less or equal $n$.
The proofs in \cite{Bernstein1}, \cite{Bernstein3} are highly difficult and
long, missing many non-trivial technical details. In his 1938 paper, Bernstein
made essential use of the homogeneity property of $\left\vert x\right\vert
^{\alpha}$, namely that for $c>0$ one has $\left\vert cx\right\vert ^{\alpha
}=c^{\alpha}\left\vert x\right\vert ^{\alpha}$. Using this property, one gets
for $a,b>0$ and all $1\leq p\leq\infty$ the relation (see \cite{Lubinsky1},
Lemma 8.2)%
\begin{equation}
E_{n}\left( \left\vert x\right\vert ^{\alpha},L_{p}\left[ -b,b\right]
\right) =\left( \frac{b}{a}\right) ^{\alpha+\frac{1}{p}}E_{n}\left(
\left\vert x\right\vert ^{\alpha},L_{p}\left[ -a,a\right] \right) .
\label{RelateEn}%
\end{equation}
This enabled Bernstein to relate the uniform best approximating error on
$\left[ -1,1\right] $ to that on $\left[ -n,n\right] $. A routine argument
shows that identity (\ref{RelateEn}) sends the best approximating polynomials
$P_{n}^{\ast}$ of order $n$ with respect to $\left[ -1,1\right] $ into a sequence
$\left\{ n^{\alpha}P_{n}^{\ast}\left( \frac{\cdot}{n}\right): n=1, 2, \ldots \right\}$ 
of scaled polynomials in $\left[ -n,n\right] $. Bernstein
also established a formulation of the limit as the error in approximation on
the real line by entire functions of exponential type, namely,%
\begin{align*}
\Delta_{\infty,\alpha} & =\lim_{n\rightarrow\infty}n^{\alpha}E_{n}\left(
\left\vert x\right\vert ^{\alpha},L_{\infty}\left[ -1,1\right] \right) \\
& =\lim_{n\rightarrow\infty}E_{n}\left( \left\vert x\right\vert ^{\alpha
},L_{\infty}\left[ -n,n\right] \right) \\
& =\lim_{n\rightarrow\infty}\left\Vert \left\vert x\right\vert ^{\alpha
}-n^{\alpha}P_{n}^{\ast}\left( \frac{\cdot}{n}\right) \right\Vert _{L_{\infty
}\left[ -n,n\right] }\\
& =\inf\left\{ \left\Vert \left\vert x\right\vert ^{\alpha}-H\right\Vert
_{L_{\infty}\left( \mathbb{R}\right) }:H\text{ is entire of exponential
type}\leq1\right\} .
\end{align*}
Recall that an entire function $f$ is of exponential type $A\geq0$ means that
for each $\varepsilon>0$ there is $z_{0}=z_{0}\left( \varepsilon\right) $, such that
\begin{equation}
\left\vert f\left( z\right) \right\vert \leq\exp\left( \left\vert
z\right\vert \left( A+\varepsilon\right) \right) ,\quad\forall
z\in\mathbb{C}:\left\vert z\right\vert \geq\left\vert z_{0}\right\vert .
\label{Entire1}%
\end{equation}
Moreover, $A$ is taken to be the infimum over all possible numbers for which
(\ref{Entire1}) holds. The elegant formulation which introduces now functions
of exponential type extends to spaces other than $L_{\infty}$. Ganzburg
\cite{Ganzburg1} and Lubinsky \cite{Lubinsky1} have shown that for all $1\leq
p\leq\infty$ positive constants $\Delta_{p,\alpha}$ exists, where
$\Delta_{p,\alpha}$ is defined by%
\begin{align}
\Delta_{p,\alpha} & =\lim_{n\rightarrow\infty}n^{\alpha+\frac{1}{p}}%
E_{n}\left( \left\vert x\right\vert ^{\alpha},L_{p}\left[ -1,1\right]
\right) \nonumber\\
& =\inf\left\{ \left\Vert \left\vert x\right\vert ^{\alpha}-H\right\Vert
_{L_{p}\left( \mathbb{R}\right) }:H\text{ is entire of exponential type
}\leq1\right\} . \label{Entire2}%
\end{align}
From now on $\Delta_{p,\alpha}$ are called the Bernstein constants.

\medskip\noindent
Only for $p=1,2$ are the values $\Delta_{p,\alpha}$ known.
In 1947, Nikolskii \cite{Nikolskii} proved that%
\[
\Delta_{1,\alpha}=\frac{\left\vert \sin\frac{\alpha\pi}{2}\right\vert }{\pi
}8\Gamma\left( \alpha+1\right) \sum_{n=0}^{\infty}\frac{\left( -1\right)
^{n}}{\left( 1+2n\right) ^{\alpha+2}},\quad\alpha>-1,
\]
and in 1969, Raitsin \cite{Raitsin} established%
\[
\Delta_{2,\alpha}=\frac{\left\vert \sin\frac{\alpha\pi}{2}\right\vert }{\pi
}2\Gamma\left( \alpha+1\right) \sqrt{\frac{\pi}{2\alpha+1}},\quad
\alpha>-\frac{1}{2}.
\]
In contrast to the case of the $L_{\infty}$ norm, no single value of
$\Delta_{\infty,\alpha}$ is known. Bernstein speculated that%
\[
\Delta_{\infty,1}=\lim_{n\rightarrow\infty}nE_{n}\left( \left\vert
x\right\vert ,L_{\infty}\left[ -1,1\right] \right) =\frac{1}{2\sqrt{\pi}%
}=0.28209~47917\ldots
\]
Over the years the speculation became known as the Bernstein conjecture in
approximation theory. Some 70 years later Varga and Carpenter \cite{Varga1}, 
using sophisticated high precision scientific computational methods, calculated
the quantity numerically to%
\[
\Delta_{\infty,1}=0.28016~94990~23869\ldots
\]
Further extensive numerical explorations for the computation of $\Delta
_{\infty,\alpha}$ have been provided later by Varga and Carpenter
\cite{Varga2}. Their numerical work gave an enormous impact into the
analytical investigation of approximation problems, not only restricted to the
Bernstein constants. We would also like to mention the numerical work of
Pachón and Trefethen (\cite{PachonTrefethen}, Figure 4.4) from 2008, when they
recomputed $\left\{ nE_{n}\left( \left\vert x\right\vert ,L_{\infty}\left[
-1,1\right] \right) :n=1,\ldots,10^{4}\right\} $ again and provided an
graphical illustration indicating a monotonic growth behavior. As the story
continued, the approximation of entire functions of exponential type became a
much studied topic in function approximation, see \cite{Dryanov},
\cite{Trigub}, but also in connection to problems in number theory, see for
instance \cite{Vaaler}. As an further application in number theory, we would
like to mention a recent paper of Ganzburg \cite{Ganzburg3}, where he
discusses new asymptotic relations between Zeta-, Dirichlet- and Catalan
functions in connection with the asymptotics of Lagrange-Hermite interpolation
for $\left\vert x\right\vert ^{\alpha}$.

\medskip\noindent
Turning back to the Bernstein constants $\Delta_{p,\alpha}$,
intensive emphasis has been placed on the structure of those entire functions
of exponential type which minimize (\ref{Entire2}). For $p=1$, the (unique)
minimizing entire function of exponential type $1$ may be expressed as an
interpolation series at the nodes $\left\{ \left( j-\frac{1}{2}\right)
\pi:j=1,2,\ldots\right\} $, see (\cite{Ganzburg1}, p. 197) or
(\cite{Lubinsky2}, Formula 1.8). For $p=\infty$ an analogous interpolation
series at unknown interpolation nodes was derived by Lubinsky in
(\cite{Lubinsky2}, Theorem 1.1). In (\cite{Lubinsky1}, Theorem 1.1) he proved
the following result.

\medskip\noindent
Denote by $P_{n}^{\ast}$ the best approximating polynomial of order
$n$ to $\left\vert x\right\vert ^{\alpha}$ in the $L_{p}$ norm. Then, for all 
$1\leq p\leq\infty$, $\alpha>-\frac{1}{p}$ not an even integer, one has%
\begin{align}
\Delta_{p,\alpha} & =\lim_{n\rightarrow\infty}n^{\alpha+\frac{1}{p}%
}\left\Vert \left\vert x\right\vert ^{\alpha}-P_{n}^{\ast}\right\Vert _{L_{p}\left[
-1,1\right] }\nonumber\\
& =\lim_{n\rightarrow\infty}n^{\alpha+\frac{1}{p}}E_{n}\left( \left\vert
x\right\vert ^{\alpha},L_{p}\left[ -1,1\right] \right) \nonumber\\
& =\lim_{n\rightarrow\infty}E_{n}\left( \left\vert x\right\vert ^{\alpha
},L_{p}\left[ -n,n\right] \right) \label{Entire3}\\
& =\lim_{n\rightarrow\infty}\left\Vert \left\vert x\right\vert ^{\alpha
}-n^{\alpha}P_{n}^{\ast}\left( \frac{\cdot}{n}\right) \right\Vert _{L_{p}\left[
-n,n\right] }\nonumber\\
& =\left\Vert \left\vert x\right\vert ^{\alpha}-H_{\alpha}^{\ast}\right\Vert
_{L_{p}\left( \mathbb{R}\right) }\nonumber\\
& =\inf\left\{ \left\Vert \left\vert x\right\vert ^{\alpha}-H\right\Vert
_{L_{p}\left( \mathbb{R}\right) }:H\text{ is entire of exponential type}%
\leq1\right\} .\nonumber
\end{align}
Moreover, uniformly on compact subsets of $\mathbb{C}$,%
\[
\lim_{n\rightarrow\infty}n^{\alpha}P_{n}^{\ast}\left( \frac{z}{n}\right)
=H_{\alpha}^{\ast}\left( z\right) ,
\]
and there is exactly one entire function $H$ of exponential type $\leq1$ which
minimizes (\ref{Entire3}). While various versions of this equality and relations (\ref{Entire3}) have been discussed by Bernstein, Raitsin and Ganzburg, the uniqueness of $H_{\alpha}^{\ast}$ proved in \cite{Lubinsky1} is a highly nontrivial result.

\medskip\noindent
From the Chebyshev alternation theorem it follows that for
each integer $n$ the best approximating polynomial $P_{n}^{\ast}$ of order $n$ to
$\left\vert x\right\vert ^{\alpha}$ in the in $L_{\infty}$ norm can be
represented as an interpolating polynomial with unknown consecutive nodes in
$\left[ -1,1\right] $. Thus, if one can find something about the nature of
those best approximating interpolation nodes in $\left[ -1,1\right] $, then
we would successfully find an approach for a constructive analytical
approximation towards some representations for the Bernstein constants
$\Delta_{\infty,\alpha}$. Since $\left\vert x\right\vert ^{\alpha}$ is an even
function a standard argument allows us to restrict ourselves to interpolation
polynomials of even order $n=2m$. It is not surprising that Bernstein
\cite{Bernstein2} himself, in 1937, studied the interpolation process to
$\left\vert x\right\vert ^{\alpha}$ by using the modified Chebyshev system%
\begin{align*}
x_{0}^{\left( 2n\right) } & =0,\\
x_{j}^{\left( 2n\right) } & =\cos\frac{\left( j-1/2\right) \pi}%
{2n},\quad j=1,2,\ldots2n,
\end{align*}
where the $x_{j}^{\left( 2n\right) }$ are the zeros of the Chebyshev
polynomial $T_{2n}$ of first kind, defined by $T_{n}\left( x\right)
=\cos\left( n\arccos x\right) $. However, $x_{0}^{\left( 2n\right) }$ is
an additional choice, but not a zero of $T_{2n}$, in order to obtain the
corresponding interpolation polynomial $P_{2n}^{\left( 1\right) }$ of order
$2n$ for $\left\vert x\right\vert ^{\alpha}$. The final answer for its limit
relation was given not before 2002 by Ganzburg (\cite{Ganzburg1}, Formula
2.7). For $\alpha>0$ one has%
\begin{equation}
\lim_{n\rightarrow\infty}\left( 2n\right) ^{\alpha}\left\Vert \left\vert
x\right\vert ^{\alpha}-P_{2n}^{\left( 1\right) }\right\Vert _{L_{\infty}
\left[ -1,1\right] }=\frac{2}{\pi}\left\vert \sin\frac{\pi\alpha}%
{2}\right\vert \int_{0}^{\infty}\frac{t^{\alpha-1}}{\cosh\left( t\right)
}dt. \label{Formel1}%
\end{equation}
Let us give some remarks on equation (\ref{Formel1}). Firstly, we mention that
in \cite{Bernstein2} Bernstein himself established a slightly weaker solution
compared to formula (\ref{Formel1}). Secondly, an extension of limit relation
(\ref{Formel1}) to complex values for $\alpha$ was obtained recently in
\cite{Ganzburg2}.

\medskip\noindent It is remarkable that, since the beginning with Bernstein,
no one has studied in detail the interpolation process by using the node
system consisting of the $2n+1$ zeros of $T_{2n+1}$, since this node system
automatically includes $x=0$ as a node and apparently it seems to be the more
natural choice. To go into detail, let
\[
x_{j}^{\left( 2n+1\right) }=\cos\frac{\left( j-1/2\right) \pi}{2n+1},\quad
j=1,2,\ldots2n+1,
\]
to be the zeros of $T_{2n+1}$ and let us denote by $P_{2n}^{\left( 2\right)
} $ the corresponding interpolation polynomial of order $2n$ for $\left\vert
x\right\vert ^{\alpha}$. There is one paper \cite{Zhu}, dealing with this
node system and presenting the result that the approximation order $\left\Vert
\left\vert x\right\vert ^{\alpha}-P_{2n}^{\left( 2\right) }\right\Vert
_{L_{\infty}\left[ -1,1\right] }=O\left( 1\right) /n^{\alpha}$ when
$\alpha\in\left( 0,1\right) $. In other words, the interpolation process
attains the Jackson order. We also would like to mention a recent monograph by
Ganzburg (\cite{Ganzburg4}, Theorem 4.2.3, Corollary 4.3.2 and Remark 4.3.3)
for a more general approach to pointwise asymptotic relations within this topic.

\medskip\noindent In 2013, the author \cite{Revers1} established a strong
asymptotic formula, valid for all $\alpha>0$, from which he established an
upper estimate for the error term, see (\cite{Revers1}, Corollary 2), by
showing that%
\begin{equation}
\overline{\lim_{n\rightarrow\infty}}\left( 2n\right) ^{\alpha}\left\Vert
\left\vert x\right\vert ^{\alpha}-P_{2n}^{\left( 2\right) }\right\Vert
_{L_{\infty}\left[ -1,1\right] }\leq\frac{2}{\pi}\left\vert \sin\frac
{\pi\alpha}{2}\right\vert \int_{0}^{\infty}\frac{t^{\alpha}}{\sinh\left(
t\right) }dt, \label{Formel2}%
\end{equation}
introducing an integral of similar nature to that in formula (\ref{Formel1}).
In this paper we continue the investigation into the precise limiting quantity
of $\left( 2n\right) ^{\alpha}\left\Vert \left\vert x\right\vert ^{\alpha
}-P_{2n}^{\left( 2\right) }\right\Vert _{L_{\infty}\left[ -1,1\right] }$
for all $\alpha>0$.

\medskip\noindent
The paper is organized as follows.

\medskip\noindent
In section 2 we collect some definitions for several
constants and functions together with some standard results for later use.

\medskip\noindent
In section 3 we establish the precise limit relation
(Theorem \ref{Theorem31}) and we show that the scaled polynomials $n^{\alpha
}P_{n}^{\left( 2\right) }\left( \frac{\cdot}{n}\right) $ uniformly
converge on compact subsets of the real line to an entire function $H_{\alpha
}$ of exponential type $1$ (Theorems \ref{Theorem32} and \ref{Theorem33}). We
may also present an explicit expansion for $H_{\alpha}$ as an interpolating
series for $\left\vert x\right\vert ^{\alpha}$ (Theorem \ref{Theorem33}). As
it can be seen later from the representation for the explicit limiting error
term, i.e. from%
\begin{equation}
\lim_{n\rightarrow\infty}\left( 2n\right) ^{\alpha}\left\Vert \left\vert
x\right\vert ^{\alpha}-P_{2n}^{\left( 2\right) }\right\Vert _{L_{\infty}
\left[ -1,1\right] }=\left\Vert H\left( \alpha,\cdot\right) \right\Vert
_{L_{\infty}\left[ 0,\infty\right) }, \label{Limitresult}%
\end{equation}
the exact determination of the quantity on the right-hand side in
(\ref{Limitresult}) for individual values for $\alpha$ appears to be a rather
difficult challenge.

\medskip\noindent
In section 4 we study a certain envelope function
$H_{1}\left( \alpha,\cdot\right) $ with respect to $\left\vert H\left(
\alpha,\cdot\right) \right\vert $. We then present in Theorem \ref{Theorem41}
an asymptotic formula for $\left\Vert H_{1}\left( \alpha,\cdot\right)
\right\Vert _{L_{\infty}\left[ 0, \infty\right) }$, when $\alpha
\rightarrow\infty$, involving again the integral in formula (\ref{Formel2}).

\medskip\noindent
In section 5, by using an higher order asymptotics and
investigating into an (itself) interesting integral inequality, see Theorem
\ref{Theorem51}, we finally arrive in Theorem \ref{Theorem53} at an
asymptotic connection between $\left\Vert H_{1}\left( \alpha,\cdot\right)
\right\Vert _{L_{\infty}\left[ 0, \infty\right) }$ and $\left\Vert H\left(
\alpha,\cdot\right) \right\Vert _{L_{\infty}\left[ 0, \infty\right) }$,
when $\alpha\rightarrow\infty$.

\medskip\noindent
In Section 6, to emphasize the importance of
the interpolation formulas based on the $P_{n}^{\left( 1\right) }$ and
$P_{n}^{\left( 2\right) }$ polynomials, we present a compilation of
numerical results involving some non-trivial linear combinations of the just
mentioned polynomials together with their corresponding Chebyshev polynomials
$T_{n}$, in order to present explicit formulas for near best approximation polynomials in the $L_{\infty}$ norm, 
see formula (\ref{Nearbest}), together with their corresponding entire functions of exponential type, see formula (\ref{BestEntire}). 
Possibly and hopefully these formulas could indicate a feasible direction towards some explicit asymptotic representations of best
approximation polynomials for $\left\vert x\right\vert ^{\alpha}$ in the
$L_{\infty}$ norm and thus for the Bernstein constants $\Delta_{\infty,\alpha
}$ themselves.

\section{Notation}

In this section we record the following constants and functions, together with
properties which are used later in the paper. We denote by $\Gamma\left(
\cdot\right) $ the usual Gamma function. The Chebyshev polynomials of first kind are denoted by $T_{n}$, where $T_{n}\left( x\right) =\cos\left( n\arccos x\right) $. 
For $x\in\mathbb{R}$, let $\left[
x\right] $ to be the floor function, namely $\left[ x\right] =\max\left\{
m\in\mathbb{Z}:m\leq x\right\} $. Obviously, then $x-1<\left[ x\right] \leq
x$. We define the following constants.
\[
\begin{array}
[c]{ll}
C\left( \alpha\right) ={
{\displaystyle\int_{0}^{\infty}}
}\dfrac{t^{\alpha}}{\sinh\left( t\right) }dt, & \alpha>0,\\
Z\left( \alpha\right) =
{\displaystyle\sum_{n=1}^{\infty}}
\dfrac{1}{n^{\alpha}}, & \alpha>1.
\end{array}
\]
Next, we define the following functions.
\[
\begin{array}
[c]{ll}
H\left( \alpha,x\right) =
{\displaystyle\int_{0}^{\infty}}
\dfrac{t^{\alpha}}{\sinh\left( t\right) }\dfrac{x\sin\left( x\right)
}{x^{2}+t^{2}}dt, & \alpha>0, x>0,\\
H_{1}\left( \alpha,x\right) =
{\displaystyle\int_{0}^{\infty}}
\dfrac{t^{\alpha}}{\sinh\left( t\right) }\dfrac{x}{x^{2}+t^{2}}dt, &
\alpha>0, x>0,\\
H_{2}\left( \alpha,x\right) =
{\displaystyle\int_{0}^{\infty}}
\dfrac{t^{\alpha}}{\sinh\left( t\right) }\dfrac{x^{2}}{x^{2}+t^{2}}dt, &
\alpha>0, x>0.
\end{array}
\]
Note that $H\left( \alpha,\cdot\right) $ should not be mixed up with the
subsequent following definition of $H_{\alpha}$. We proceed further with:
\[
\begin{array}
[c]{ll}%
F\left( \alpha,x\right) =
{\displaystyle\int_{0}^{\infty}}
\dfrac{t^{\alpha}}{\sinh\left( xt\right) }\dfrac{1}{1+t^{2}}dt, &
\alpha>0,x>0,\\
G\left( \alpha,x\right) =
{\displaystyle\int_{0}^{\infty}}
t^{\alpha}e^{-xt}\dfrac{1}{1+t^{2}}dt, & \alpha>0, x>0,\\
R\left( \alpha,x\right) =\left( x/ \alpha \right) F\left( \alpha+1,x\right)
-F\left( \alpha,x\right) , & \alpha>0, x>0,\\
S\left( \alpha,x\right) =\left( \alpha x^{\alpha-1}/ 2 \right) \left( x^{2}
+\alpha^{2}\right) R\left( \alpha,x\right) , & \alpha>0, x>0,\\
F_{1}\left( \alpha,x\right) =\left( 2-\dfrac{1}{2^{\alpha}}\right) Z\left(
\alpha+1\right) G\left( \alpha,x\right) , & \alpha>0, x>0,\\
F_{2}\left( \alpha,x\right) =\left( 2-\dfrac{1}{2^{\alpha-2}}\right)
Z\left( \alpha-1\right) G\left( \alpha,x\right) , & \alpha>2, x>0.
\end{array}
\]
We collect the following easy to establish properties.
\begin{equation}
\begin{array}
[c]{lll}
\text{(a)} & H_{1}\left( \alpha,x\right) =x^{\alpha}F\left( \alpha
,x\right) , & \alpha>0, x>0,\\
\text{(b)} & H_{2}\left( \alpha,x\right) =x^{\alpha+1}F\left(
\alpha,x\right) , & \alpha>0, x>0,\\
\text{(c)} & 0\leq H_{2}\left( \alpha,x\right) \leq C\left( \alpha\right)
, & \alpha>0, x>0,\\
\text{(d)} & \left\vert H\left( \alpha,x\right) \right\vert \leq
H_{2}\left( \alpha,x\right) , & \alpha>0, x>0,\\
\text{(e)} & C\left( \alpha\right) =\alpha^{\alpha+1}
{\displaystyle\int_{0}^{\infty}}
\dfrac{t^{\alpha}}{\sinh\left( \alpha t\right) }dt, & \alpha>0,\\
\text{(f)} & C\left( \alpha-1\right) =\alpha^{\alpha}
{\displaystyle\int_{0}^{\infty}}
\dfrac{t^{\alpha-1}}{\sinh\left( \alpha t\right) }dt, & \alpha>1.
\end{array}
\label{Properties1}
\end{equation}

Note that (\ref{Properties1}f) is not an easy consequence of
(\ref{Properties1}e). We also remark, that for $\alpha\geq1$ equation
(\ref{Properties1}a) remains also valid for $x=0$, by interpreting both sides
as their $\lim_{x\rightarrow0^{+}}$. The same holds true for
(\ref{Properties1}b) and (\ref{Properties1}d) for $\alpha>0$. We then have%
\begin{align}
H_{1}\left( \alpha,0\right)  & =\left\{
\begin{array}
[c]{ll}%
\frac{\pi}{2}, & \alpha=1,\\
0, & \alpha>1,
\end{array}
\right. \nonumber\\
H\left( \alpha,0\right)  & =H_{2}\left( \alpha,0\right) =0,\quad\alpha>0.
\label{limitproperty}%
\end{align}
Then, using (\ref{limitproperty}), we define
\begin{equation}
H_{\alpha}\left( x\right) =\left\vert x\right\vert ^{\alpha}-\dfrac{2}{\pi
}\sin\dfrac{\pi\alpha}{2}H\left( \alpha,x\right) ,\quad\alpha>0,x\geq0.
\label{Entire4}
\end{equation}

Next, we record
\begin{equation}
\begin{array}
[c]{lll}
\text{(a)} &
{\displaystyle\int_{0}^{c}}
x^{\alpha-1}e^{-\alpha x}\left( 1-x\right) dx & \\
& =
{\displaystyle\int_{c}^{\infty}}
x^{\alpha-1}e^{-\alpha x}\left( x-1\right) dx=\dfrac{c^{\alpha}e^{-\alpha
c}}{\alpha}, & \alpha>0,c\geq0,\\
\text{(b)} &
{\displaystyle\int_{0}^{\infty}}
x^{\alpha-2}e^{-\alpha x}dx=\dfrac{\Gamma\left( \alpha-1\right) }%
{\alpha^{\alpha-1}}, & \alpha>1,\\
\text{(c)} &
{\displaystyle\int_{0}^{\infty}}
x^{\alpha-1}e^{-\alpha x}dx=
{\displaystyle\int_{0}^{\infty}}
x^{\alpha}e^{-\alpha x}dx=\dfrac{\Gamma\left( \alpha\right) }{\alpha
^{\alpha}}, & \alpha>0,\\
\text{(d)} & \Gamma\left( \alpha\right) >\sqrt{\dfrac{2\pi}{\alpha}}\left(
\dfrac{\alpha}{e}\right) ^{\alpha}, & \alpha\geq1.
\end{array}
\label{Properties2}%
\end{equation}

\begin{proof}
Both equations in (\ref{Properties2}c) as well as (\ref{Properties2}b) are
derived directly from (\cite{Gradshteyn}, 3.381.4). The equations
(\ref{Properties2}a) are then an easy consequence of (\ref{Properties2}c)
combined together with (\cite{Gradshteyn}, 3.381.3 and 8.356.2). Inequality
(\ref{Properties2}d) can be derived from (\cite{Gradshteyn}, 8.327).
\end{proof}

\noindent
Finally, we apologize for the repulsive notation $\left\Vert f\left( x \right) \right\Vert$ instead of $\left\Vert f \right\Vert$ that we occasionally use in this paper.

\section{The limiting error term}

Let $\alpha>0$ and $n\in\mathbb{N}$. We recall the definition of the nodes
$x_{j}^{\left( 2n+1\right) }=\cos\frac{\left( j-1/2\right) \pi}{2n+1}$ for
$j=1,2,\ldots2n+1$ to be the zeros of the Chebyshev polynomial $T_{2n+1}$.
Further denote by $P_{2n}^{\left( 2\right) }$ the unique Lagrange
interpolation polynomial for $\left\vert x\right\vert ^{\alpha}$ in the
interval $\left[ -1,1\right] $.

\medskip\noindent Then, for $2n>\alpha>0$ and all $x\in\left[ -1,1\right] $, 
we simply derive from (\cite{Revers1}, Theorem 1) the asymptotic formula%
\[
\left( 2n\right) ^{\alpha}\left( \left\vert x\right\vert ^{\alpha}%
-P_{2n}^{\left( 2\right) }\left( x\right) \right) =\left( -1\right)
^{n}\frac{2}{\pi}\sin\frac{\pi\alpha}{2}\left( 1-\frac{1}{2n+1}\right)
\]%
\begin{equation}
\cdot T_{2n+1}\left( x\right) \int_{0}^{\infty}\frac{t^{\alpha}}%
{\sinh\left( t\right) }\frac{2nx}{\left( 2nx\right) ^{2}+t^{2}}dt+o\left(
1\right) ,\quad n\rightarrow\infty, \label{mikeasymptotics}%
\end{equation}
where o$\left( 1\right) $ is independent of $x$.

\medskip\noindent
The objective now is to find its limiting error term in the
$L_{\infty}$ norm. Since the error term is symmetric in $\left[ -1,1\right]
$ we prove the following

\begin{theorem}
\label{Theorem31}
Let $\alpha>0.$ Then we have%
\begin{align*}
\lim_{n\rightarrow\infty}\left( 2n\right) ^{\alpha}\left\Vert \left\vert
x\right\vert ^{\alpha}-P_{2n}^{\left( 2\right) }\right\Vert _{L_{\infty
}\left[ 0,1\right] } & =\frac{2}{\pi}\left\vert \sin\frac{\pi\alpha}%
{2}\right\vert \left\Vert H\left( \alpha,\cdot\right) \right\Vert
_{L_{\infty}\left[ 0,\infty\right) }\\
& =\frac{2}{\pi}\left\vert \sin\frac{\pi\alpha}{2}\right\vert \sup
_{x\in\left[ 0,\infty\right) }%
{\displaystyle\int_{0}^{\infty}}
\dfrac{t^{\alpha}}{\sinh\left( t\right) }\dfrac{x\left\vert \sin
x\right\vert }{x^{2}+t^{2}}dt.
\end{align*}
\end{theorem}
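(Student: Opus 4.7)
The plan is to start from the pointwise asymptotic formula~(\ref{mikeasymptotics}), perform the scaling $y = 2nx$ to identify the integral with $H_1(\alpha,y)$, show that the Chebyshev factor converges locally uniformly to $\sin y$ (up to the sign $(-1)^n$), and use an elementary tail bound on $H_1$ to pass from locally uniform convergence to convergence of suprema. Under the substitution $x = y/(2n)$ the inner integral in~(\ref{mikeasymptotics}) becomes exactly $H_1(\alpha,y)$ by the definition in Section~2, so~(\ref{mikeasymptotics}) rewrites as
\[
(2n)^\alpha\bigl(|x|^\alpha - P_{2n}^{(2)}(x)\bigr) = (-1)^n \tfrac{2}{\pi}\sin\tfrac{\pi\alpha}{2}\bigl(1-\tfrac{1}{2n+1}\bigr)\, T_{2n+1}\!\bigl(\tfrac{y}{2n}\bigr)\,H_1(\alpha,y) + o(1),
\]
with $o(1)$ uniform in $y \in [0,2n]$.

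Next I would show that $(-1)^n T_{2n+1}(y/(2n)) \to \sin y$ uniformly on each compact subset of $[0,\infty)$. This is a direct computation: writing $T_{2n+1}(y/(2n)) = \cos((2n+1)\arccos(y/(2n)))$, expanding $\arccos(y/(2n)) = \pi/2 - y/(2n) + O(y^3/n^3)$, and using the identity $\cos((2n+1)\pi/2 - \phi) = (-1)^n \sin\phi$, yields the claim. Combined with the continuity of $H_1(\alpha,\cdot)$ on $(0,\infty)$ and the fact that $|\sin y|\, H_1(\alpha,y) \to 0$ as $y \to 0^+$ (for $\alpha \in (0,1)$ this needs the local expansion $H_1(\alpha,y) \sim c\, y^{\alpha-1}$ obtained via the substitution $t = ys$; for $\alpha \geq 1$ the product extends continuously by~(\ref{limitproperty})), the product $|T_{2n+1}(y/(2n))|\,H_1(\alpha,y)$ converges uniformly on each compact subset of $[0,\infty)$ to $|\sin y|\,H_1(\alpha,y) = |H(\alpha,y)|$.

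Finally, to pass from locally uniform convergence to convergence of suprema, I would exploit the elementary tail bound
\[
0 \leq H_1(\alpha,y) \leq \frac{C(\alpha)}{y}, \qquad y > 0,
\]
which follows from $y/(y^2+t^2) \leq 1/y$. Combined with $|T_{2n+1}| \leq 1$ and $|\sin y| \leq 1$, this bound is uniform in $n$: given $\varepsilon > 0$, any $Y > C(\alpha)/\varepsilon$ forces both $\sup_{y \geq Y} |T_{2n+1}(y/(2n))|\,H_1(\alpha,y)$ and $\sup_{y \geq Y} |H(\alpha,y)|$ below $\varepsilon$. The uniform convergence on $[0,Y]$ then matches the two suprema up to $\varepsilon$, and the claim
\[
\sup_{x \in [0,1]} (2n)^\alpha \bigl||x|^\alpha - P_{2n}^{(2)}(x)\bigr| \longrightarrow \tfrac{2}{\pi}\bigl|\sin\tfrac{\pi\alpha}{2}\bigr|\,\sup_{y \geq 0} |H(\alpha,y)|
\]
follows. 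The main obstacle is precisely this passage to the supremum over the growing interval $[0,2n]$, since the locally uniform convergence alone is insufficient; the uniform-in-$n$ tail estimate on $H_1$ is what makes the argument work.
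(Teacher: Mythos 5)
Your proposal is correct and takes essentially the same route as the paper: the paper likewise rescales (\ref{mikeasymptotics}) to reduce the claim to $\lim_{n\to\infty}\left\Vert T_{2n+1}\left(\frac{x}{2n}\right)H_{1}\left(\alpha,x\right)\right\Vert_{L_{\infty}\left[0,2n\right]}=\left\Vert H\left(\alpha,\cdot\right)\right\Vert_{L_{\infty}\left[0,\infty\right)}$, proves the locally uniform convergence of $T_{2n+1}\left(\frac{x}{2n}\right)/x$ to $\left(-1\right)^{n}\sin\left(x\right)/x$ (Lemmas \ref{lemma34}--\ref{lemma36}), and splits off the tail $\left[C,2n\right]$ using your bound in the equivalent form $H_{2}\left(\alpha,x\right)=xH_{1}\left(\alpha,x\right)\leq C\left(\alpha\right)$, which also handles the behaviour near $x=0$ for $0<\alpha<1$. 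The only difference is organizational: the paper phrases the passage to the supremum as separate $\overline{\lim}$ and $\underline{\lim}$ estimates (Lemmas \ref{lemma38} and \ref{lemma310}) rather than as a single uniform-tail argument.
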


\begin{theorem}
\label{Theorem32}
Let $\alpha>0.$ Then, uniformly on compact subsets in
$\left[ 0,\infty\right) $,%
\[
\lim_{n\rightarrow\infty}\left( 2n\right) ^{\alpha}P_{2n}^{\left( 2\right)
}\left( \frac{x}{2n}\right) =H_{\alpha}\left( x\right) .
\]
\end{theorem}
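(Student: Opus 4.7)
The plan is to substitute $x = y/(2n)$ into the asymptotic expansion (\ref{mikeasymptotics}) and track the limit as $n \to \infty$. Since the $o(1)$ error in (\ref{mikeasymptotics}) is uniform in $x \in [-1,1]$, and for any compact $K \subset [0,\infty)$ the point $y/(2n)$ lies in $[0,1]$ for all sufficiently large $n$, this substitution yields
\[
|y|^\alpha - (2n)^\alpha P_{2n}^{(2)}\!\left(\tfrac{y}{2n}\right) = (-1)^n \tfrac{2}{\pi} \sin\tfrac{\pi\alpha}{2} \left(1 - \tfrac{1}{2n+1}\right) T_{2n+1}\!\left(\tfrac{y}{2n}\right) H_1(\alpha, y) + o(1),
\]
uniformly in $y \in K$, where the change of variables collapses the integral in (\ref{mikeasymptotics}) to exactly $H_1(\alpha, y)$. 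The goal therefore reduces to showing $(-1)^n T_{2n+1}(y/(2n)) H_1(\alpha, y) \to \sin(y) H_1(\alpha, y) = H(\alpha, y)$ uniformly on $K$.

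First I would asymptotically analyze $T_{2n+1}(y/(2n))$. Using $T_{2n+1}(\cos\theta) = \cos((2n+1)\theta)$ with $\theta = \arccos(y/(2n)) = \pi/2 - \arcsin(y/(2n))$ and $\cos((2n+1)\pi/2 - \phi) = (-1)^n \sin(\phi)$, one arrives at the clean identity
\[
(-1)^n T_{2n+1}\!\left(\tfrac{y}{2n}\right) = \sin\bigl((2n+1)\arcsin(y/(2n))\bigr).
\]
A Taylor expansion $\arcsin(y/(2n)) = y/(2n) + O(n^{-3})$, uniform for $y \in K$, then gives $(2n+1)\arcsin(y/(2n)) = y + y/(2n) + O(n^{-2})$, whence $(-1)^n T_{2n+1}(y/(2n)) = \sin(y) + y\cos(y)/(2n) + O(n^{-2})$ uniformly on $K$.

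The subtle step is controlling the product with $H_1(\alpha,y)$, which is unbounded as $y \to 0^+$ when $0 < \alpha < 1$. A naive bound of the form $|(-1)^n T_{2n+1}(y/(2n)) - \sin(y)| \cdot \|H_1(\alpha,\cdot)\|_{L^\infty(K)}$ therefore fails whenever $0 \in K$. Instead I would exploit the identity $y H_1(\alpha,y) = H_2(\alpha,y)$ from (\ref{Properties1}b), combined with the uniform bound $H_2(\alpha, \cdot) \leq C(\alpha)$ from (\ref{Properties1}c). Setting $\phi_n(y) = (2n+1)\arcsin(y/(2n))$ and applying the mean value theorem,
\[
[\sin(\phi_n(y)) - \sin(y)]\, H_1(\alpha, y) = \cos(\xi_n(y)) \cdot \frac{\phi_n(y) - y}{y} \cdot H_2(\alpha, y),
\]
where $\xi_n(y)$ lies between $y$ and $\phi_n(y)$. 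Since $(\phi_n(y)-y)/y = 1/(2n) + O(y^2/n^2) = O(n^{-1})$ uniformly on $K$, the right side is $O(n^{-1})$ uniformly, which is the estimate required. The factor $(1 - 1/(2n+1)) \to 1$ contributes only another $O(n^{-1})$ term, so combining everything yields $(2n)^\alpha P_{2n}^{(2)}(y/(2n)) = H_\alpha(y) + o(1)$ uniformly on $K$.

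The main obstacle is precisely the singularity of $H_1(\alpha, y)$ at $y = 0$ when $0 < \alpha < 1$; it forces one to exploit the cancellation $yH_1 = H_2$ rather than bounding the two factors independently. Everything else amounts to a careful substitution, one Taylor expansion, and bookkeeping of uniform error terms.
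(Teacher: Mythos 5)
Your proposal is correct and follows essentially the same route as the paper: rescale the uniform asymptotic formula (\ref{mikeasymptotics}), replace $(-1)^nT_{2n+1}(y/(2n))$ by $\sin\bigl((2n+1)\arcsin(y/(2n))\bigr)$ and Taylor-expand, and — the key point you correctly isolate — absorb the singularity of $H_1(\alpha,\cdot)$ at the origin via $yH_1(\alpha,y)=H_2(\alpha,y)\leq C(\alpha)$, which is exactly what the paper's Lemmas \ref{lemma33}--\ref{lemma36} accomplish by working with $T_{2n+1}(y/(2n))/y-(-1)^n\sin(y)/y$. Your mean-value-theorem formulation is just a repackaging of the same estimate, so no further comment is needed.
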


\begin{theorem}
\label{Theorem33}
Let $\alpha>0$ be not an even integer. Then $H_{\alpha}$ (interpreted as its
extension into the complex domain) is an entire function of exponential type
1, interpolating $\left\vert x\right\vert ^{\alpha}$ at the interpolation
points $\left\{ k\pi:k=0,1,2,\ldots\right\} $ and $H_{\alpha}$ admits a
representation as an interpolating series of the following form. Denote by
$N=\left[ \alpha/2\right] .$ Then, for all $x\in\mathbb{R}$, we have
\begin{align}
H_{\alpha}\left( x\right)  & =\sin x\left( \frac{2}{\pi}\sum_{n=0}%
^{N-1}\sin\left( \frac{\pi\left( \alpha-2n-2\right) }{2}\right) C\left(
\alpha-2n-2\right) x^{2n+1}\right. \nonumber\\
& \left. +2x^{2N+1}\sum_{k=1}^{\infty}\left( -1\right) ^{k}\frac{\left(
k\pi\right) ^{\alpha-2N}}{x^{2}-\left( k\pi\right) ^{2}}\right) .
\label{Entire5}%
\end{align}
For the special case $0<\alpha<2$ the expansion is then represented by%
\begin{align}
H_{\alpha}\left( x\right) =2x\sin x\sum_{k=1}^{\infty}\left( -1\right)
^{k}\frac{\left( k\pi\right) ^{\alpha}}{x^{2}-\left( k\pi\right) ^{2}}.
\label{Entire6}%
\end{align}
\end{theorem}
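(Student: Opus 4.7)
\medskip\noindent
My plan is to derive the explicit series representation (\ref{Entire5}) first, and then to read off entireness, exponential type, and the interpolation property directly from the closed form. The interpolation claim is in fact immediate from (\ref{Entire4}): at $x=k\pi$ with $k\geq 0$ the factor $\sin x$ in the integrand of $H(\alpha,\cdot)$ vanishes, so $H(\alpha,k\pi)=0$ and consequently $H_\alpha(k\pi)=(k\pi)^\alpha$.

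The key computation rests on the Mittag-Leffler partial fraction expansion
\[
\frac{1}{\sinh t}=\frac{1}{t}+2t\sum_{k=1}^\infty\frac{(-1)^k}{t^2+(k\pi)^2}.
\]
For $0<\alpha<2$ (the case $N=0$) I would insert this into $H(\alpha,x)$ and exchange summation with integration on $(0,\infty)$; the leading piece is the classical Mellin evaluation $\int_0^\infty t^{\alpha-1}/(x^2+t^2)\,dt=\pi x^{\alpha-2}/(2\sin(\pi\alpha/2))$, and each remaining term is computed in closed form using the partial fraction
\[
\frac{t^2}{(x^2+t^2)(t^2+(k\pi)^2)}=\frac{x^2}{(x^2-(k\pi)^2)(x^2+t^2)}+\frac{(k\pi)^2}{((k\pi)^2-x^2)(t^2+(k\pi)^2)}.
\]
Collecting terms and invoking the classical identity $\frac{1}{\sin x}=\frac{1}{x}+2x\sum_{k\geq 1}(-1)^k/(x^2-(k\pi)^2)$ cancels the stray $x^{\alpha-2}$ contribution and yields formula (\ref{Entire6}).

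For $\alpha\geq 2$ not an even integer I would iterate the elementary identity $\frac{1}{x^2+t^2}=\frac{1}{t^2}-\frac{x^2}{t^2(x^2+t^2)}$ exactly $N=\left[\alpha/2\right]$ times. Each step peels off a coefficient $C(\alpha-2n-2)$, well-defined since the condition $2N<\alpha$ (strict because $\alpha\notin 2\mathbb{Z}$) guarantees convergence at $t=0$, and leaves a residual integral whose exponent $\alpha-2N$ lies in $(0,2)$; this residual is then handled by the previous paragraph. A short trigonometric bookkeeping using $\sin(\pi(\alpha-2n-2)/2)=-(-1)^n\sin(\pi\alpha/2)$ assembles everything into the precise form (\ref{Entire5}).

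From (\ref{Entire5}) the remaining claims follow by inspection. The right-hand side is $\sin x$ times the sum of an odd polynomial and the meromorphic series $2x^{2N+1}\sum_{k\geq 1}(-1)^k(k\pi)^{\alpha-2N}/(x^2-(k\pi)^2)$, whose simple poles at $\pm k\pi$ are exactly matched by zeros of $\sin z$, so the product extends to an entire function of $z\in\mathbb{C}$. Exponential type at most $1$ is inherited from the $\sin z$ factor since the meromorphic prefactor grows only polynomially on horizontal strips, and the type is exactly $1$ because $H_\alpha(k\pi)=(k\pi)^\alpha$ does not decay as $k\to\infty$. The most delicate point, I expect, is the exchange of summation and integration in the $0<\alpha<2$ step (the $1/t$ contribution of the Mittag-Leffler expansion is not absolutely integrable on its own, and must be paired with the regularized integral), together with the fact that the series in (\ref{Entire5}) converges only conditionally when $\alpha-2N\in(1,2)$, so the holomorphic extension into $\mathbb{C}$ must be justified by Abel-type summation rather than absolute estimates.
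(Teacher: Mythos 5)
Your proposal is essentially correct, but it takes a genuinely different (and more self-contained) route than the paper. The paper's proof of Theorem \ref{Theorem33} is a three-line citation: the expansion (\ref{Entire5}) and its special case (\ref{Entire6}) are imported from Ganzburg's Formulas 4.14 and 4.16, the exponential type from Timan, and only the interpolation property is read off from (\ref{Entire4}) as you also do. You instead derive everything from the definition of $H(\alpha,\cdot)$ via the Mittag--Leffler expansion of $1/\sinh t$, and your computation checks out: the symmetric partial fraction you chose for $t^{2}/\bigl((x^{2}+t^{2})(t^{2}+(k\pi)^{2})\bigr)$ reduces every term to the absolutely convergent Mellin integral $\int_{0}^{\infty}t^{\alpha-1}(x^{2}+t^{2})^{-1}dt=\pi x^{\alpha-2}/(2\sin(\pi\alpha/2))$ for $0<\alpha<2$ (so less regularization is needed than you fear), the cotangent-type identity for $1/\sin x$ cancels the stray $x^{\alpha-1}\sin x$ term exactly, and the reduction $H(\alpha,x)=C(\alpha-2)\,x\sin x-x^{2}H(\alpha-2,x)$ iterated $N=[\alpha/2]$ times produces precisely the coefficients $\sin(\pi(\alpha-2n-2)/2)\,C(\alpha-2n-2)$ of (\ref{Entire5}), with $\alpha-2N\in(0,2)$ guaranteed because $\alpha$ is not an even integer. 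What your approach buys is a proof independent of Ganzburg's machinery, at the cost of having to justify the interchange of sum and integral and the conditional convergence for $\alpha-2N\in[1,2)$, which you correctly identify as the delicate points.

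One spot where your reasoning is loose: the assertion that the type is \emph{exactly} $1$ ``because $H_{\alpha}(k\pi)=(k\pi)^{\alpha}$ does not decay'' is not conclusive on its own -- a polynomial has type $0$ and does not decay at the nodes either. The upper bound (type $\leq 1$) does follow from your growth estimate on the meromorphic prefactor, and that is all the paper itself establishes via Timan; if you want type exactly $1$ you need an additional argument, for instance that $H_{\alpha}(x)-|x|^{\alpha}$ is bounded and genuinely oscillating on $\mathbb{R}$, which rules out smaller type via Bernstein's theorem on approximation by functions of exponential type $\sigma<1$.
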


\begin{figure}[th]
\begin{center}
\includegraphics[width=0.6\textwidth]{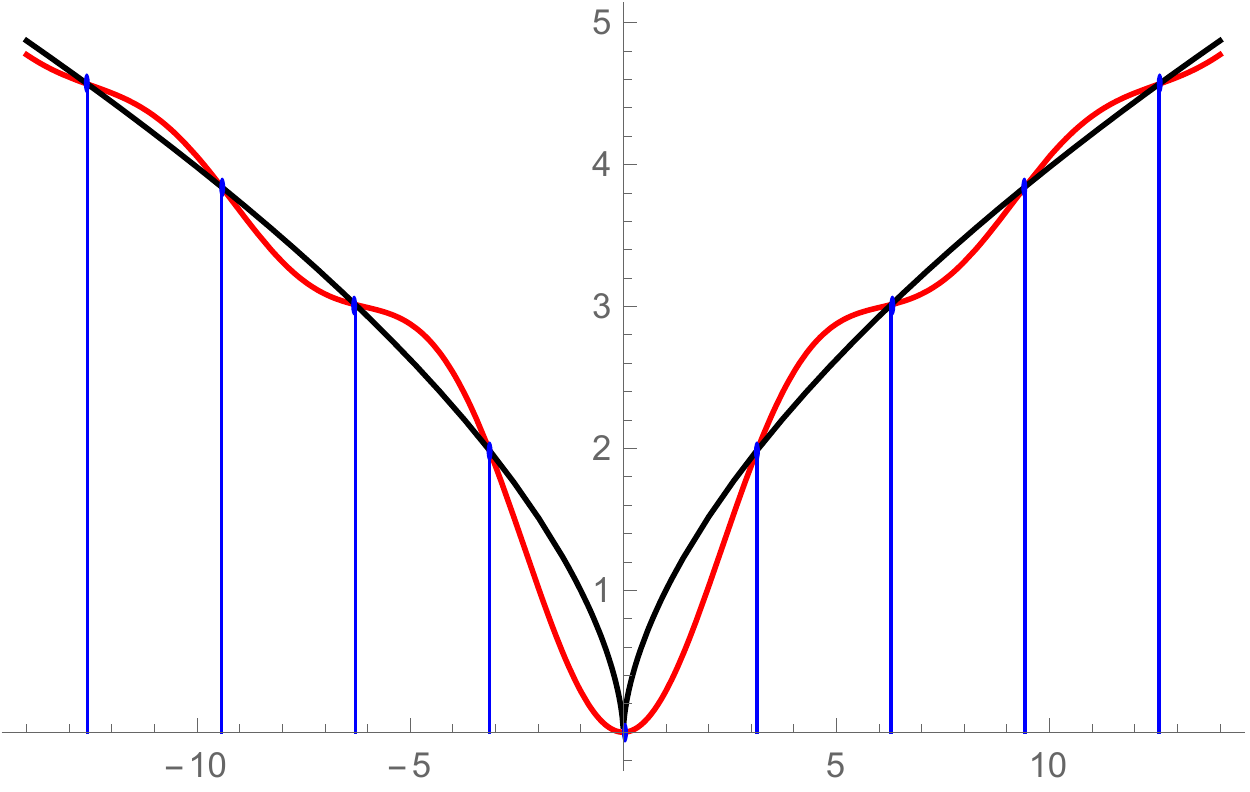}
\end{center}
\caption{Interpolating entire function $H_{\alpha}$ of exponential type 1 from
(\ref{Entire6})}%
\end{figure}

\noindent We start with the proof for Theorem \ref{Theorem31} by splitting it
in several Lemmas. First, we present without a proof the following two Lemmas.

\begin{lemma}
\label{lemma31}Let $x\in\left[ 0,\frac{1}{2}\right] $. Then $\left\vert
\arcsin x-x\right\vert \leq x^{2}.$
\end{lemma}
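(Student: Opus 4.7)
The plan is to reduce the lemma to a brief monotonicity argument. Since $(\arcsin t)' = (1-t^2)^{-1/2} \geq 1$ for $t \in [0,1)$, we already have $\arcsin x \geq x$ on $[0,1/2]$, so the absolute value can be dropped, and the assertion reduces to proving $\arcsin x - x \leq x^2$, equivalently
\[
g(x) \ := \ x^2 + x - \arcsin x \ \geq \ 0 \qquad \text{for } x \in \left[0, \tfrac{1}{2}\right].
\]

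First I would record the initial values $g(0)=0$ and, from
\[
g'(x) = 2x + 1 - (1-x^2)^{-1/2}, \qquad g''(x) = 2 - \frac{x}{(1-x^2)^{3/2}},
\]
the additional identity $g'(0) = 0$. Next I would observe that the auxiliary map $\varphi(x) := x(1-x^2)^{-3/2}$ has derivative $\varphi'(x) = (1+2x^2)(1-x^2)^{-5/2} > 0$ on $[0,1)$, hence is strictly increasing there. Therefore, on $[0,1/2]$ it is bounded above by its value at the right endpoint, namely $\varphi(1/2) = \tfrac{1}{2}(3/4)^{-3/2} = 4/(3\sqrt{3}) < 2$. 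Consequently $g''(x) \geq 0$ throughout $[0,1/2]$.

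It then follows that $g'$ is nondecreasing on $[0,1/2]$, so $g'(x) \geq g'(0) = 0$; this in turn makes $g$ nondecreasing, so $g(x) \geq g(0) = 0$, which is precisely the claim. The only place any care is required is the endpoint inequality $\varphi(1/2) = 4/(3\sqrt{3}) < 2$, which is comfortably satisfied; beyond that, no substantive obstacle arises, and the inequality is far from sharp on $[0,1/2]$ (the true order of $\arcsin x - x$ is $x^3/6$), so the argument has considerable slack.
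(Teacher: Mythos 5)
Your argument is correct: the monotonicity of $\arcsin$ justifies dropping the absolute value, the derivative computations $g'(x)=2x+1-(1-x^2)^{-1/2}$ and $g''(x)=2-x(1-x^2)^{-3/2}$ are right, and the bound $\varphi(1/2)=4/(3\sqrt{3})<2$ does give $g''\geq 0$ on $[0,\tfrac12]$, from which the conclusion follows via $g'(0)=g(0)=0$. The paper states this lemma without proof, so there is nothing to compare against; your convexity argument is a perfectly standard and complete way to fill that gap (one could shorten it slightly by writing $\arcsin x - x=\int_0^x\bigl((1-t^2)^{-1/2}-1\bigr)\,dt$ and bounding the integrand by $2t$ on $[0,\tfrac12]$, but this is a matter of taste).
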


\begin{lemma}
\label{lemma32}For $n\in\mathbb{N}$ and $x\in\left[ -1,1\right] $ we have%
\[
T_{2n+1}\left( x\right) =\left( -1\right) ^{n}\sin\left( \left(
2n+1\right) \arcsin x\right) .
\]
\end{lemma}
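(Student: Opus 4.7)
The plan is to reduce the claim to the defining identity $T_{m}\left(\cos\theta\right)=\cos\left(m\theta\right)$ through the substitution $\theta=\pi/2-\arcsin x$. For $x\in\left[-1,1\right]$ we have $\arcsin x\in\left[-\pi/2,\pi/2\right]$, so $\theta\in\left[0,\pi\right]$, which is precisely the principal range of $\arccos$; moreover $\cos\theta=\sin\left(\arcsin x\right)=x$, so $\theta=\arccos x$. Applying the Chebyshev definition then gives
\[
T_{2n+1}\left(x\right)=\cos\left(\left(2n+1\right)\arccos x\right)=\cos\left(\left(2n+1\right)\left(\frac{\pi}{2}-\arcsin x\right)\right).
\]

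Next, I would expand via the angle-subtraction formula so as to separate the $n$-dependent phase from the $x$-dependent argument:
\[
\cos\left(\frac{\left(2n+1\right)\pi}{2}-\left(2n+1\right)\arcsin x\right)=\cos\frac{\left(2n+1\right)\pi}{2}\cos\left(\left(2n+1\right)\arcsin x\right)+\sin\frac{\left(2n+1\right)\pi}{2}\sin\left(\left(2n+1\right)\arcsin x\right).
\]
Since $\left(2n+1\right)\pi/2$ is an odd multiple of $\pi/2$, one has $\cos\left(\left(2n+1\right)\pi/2\right)=0$ and $\sin\left(\left(2n+1\right)\pi/2\right)=\cos\left(n\pi\right)=\left(-1\right)^{n}$. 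The cosine term therefore vanishes, and only $\left(-1\right)^{n}\sin\left(\left(2n+1\right)\arcsin x\right)$ survives, which is the claimed identity.

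There is no substantive obstacle; the only point that deserves a moment's care is the branch choice for $\arcsin$, which must be fixed so that the identification $\pi/2-\arcsin x=\arccos x$ holds uniformly on $\left[-1,1\right]$. The sine representation obtained here will be useful in what follows, because it exhibits the zeros $x_{j}^{\left(2n+1\right)}$ of $T_{2n+1}$ directly as the images of $\arcsin$ values $\left(j-1/2\right)\pi/\left(2n+1\right)$, and because the factor $\sin\left(\left(2n+1\right)\arcsin x\right)$ couples cleanly with the kernel in the asymptotic formula (\ref{mikeasymptotics}) when one passes to the $L_{\infty}$ limit.
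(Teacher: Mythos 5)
Your proof is correct: the paper states Lemma \ref{lemma32} without proof, and your derivation via $\arccos x=\pi/2-\arcsin x$, the angle-subtraction formula, and $\cos\bigl(\left(2n+1\right)\pi/2\bigr)=0$, $\sin\bigl(\left(2n+1\right)\pi/2\bigr)=\left(-1\right)^{n}$ is exactly the standard argument the author is implicitly relying on. The branch check ($\pi/2-\arcsin x\in\left[0,\pi\right]$ so that it equals the principal $\arccos x$) is the only delicate point and you handle it properly.
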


\begin{lemma}
\label{lemma33}Let $n\in\mathbb{N}$ and $x\in\left[ -2n,2n\right] .$ Then we have%
\[
\left\vert \frac{T_{2n+1}\left( \frac{x}{2n}\right) }{x}\right\vert
\leq1+\frac{1}{2n}.
\]
\end{lemma}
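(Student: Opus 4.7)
The plan is to reduce the inequality to the classical bound $|\sin(m\theta)| \le m|\sin\theta|$ for positive integers $m$.

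First I would invoke Lemma \ref{lemma32} with $y = x/(2n) \in [-1,1]$ (which is legitimate since $|x| \le 2n$) to rewrite
\[
T_{2n+1}\!\left(\tfrac{x}{2n}\right) = (-1)^n \sin\!\left((2n+1)\arcsin\tfrac{x}{2n}\right).
\]
Next I would set $\theta = \arcsin(x/(2n)) \in [-\pi/2,\pi/2]$, so that $x = 2n \sin\theta$. Then the quantity to estimate becomes
\[
\left|\frac{T_{2n+1}(x/(2n))}{x}\right| = \frac{|\sin((2n+1)\theta)|}{2n\,|\sin\theta|},
\]
and the claim is equivalent to $|\sin((2n+1)\theta)| \le (2n+1)|\sin\theta|$.

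The latter is a standard identity that I would prove by induction on $m$, using the addition formula
\[
\sin((m+1)\theta) = \sin(m\theta)\cos\theta + \cos(m\theta)\sin\theta,
\]
which yields $|\sin((m+1)\theta)| \le |\sin(m\theta)| + |\sin\theta| \le (m+1)|\sin\theta|$ from the inductive hypothesis. Applied with $m = 2n+1$, this gives exactly the required bound.

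There is no real obstacle here; the only point worth flagging is that the naive estimate $|\sin u| \le |u|$ applied directly to $u = (2n+1)\arcsin(x/(2n))$ is insufficient, because $|\arcsin(x/(2n))| \ge |x/(2n)|$ on $[-1,1]$ and indeed blows up to $\pi/2$ near the endpoints. Expressing the argument through $\theta$ rather than through $x$ and then using the sharper multi-angle inequality is what makes the estimate work uniformly on the whole interval $[-2n,2n]$, including near $\pm 2n$ where $T_{2n+1}(x/(2n))$ attains the value $\pm 1$ and the bound is attained in the limit sense.
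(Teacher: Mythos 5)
Your argument is correct. The substitution $\theta=\arcsin(x/(2n))$, the identity $T_{2n+1}(\sin\theta)=(-1)^n\sin((2n+1)\theta)$ from Lemma \ref{lemma32}, and the classical inequality $\left\vert\sin(m\theta)\right\vert\le m\left\vert\sin\theta\right\vert$ (proved by induction via the addition formula) together give
\[
\left\vert\frac{T_{2n+1}(x/(2n))}{x}\right\vert=\frac{\left\vert\sin((2n+1)\theta)\right\vert}{2n\left\vert\sin\theta\right\vert}\le\frac{2n+1}{2n},
\]
which is exactly the claim. The paper does not give an argument at all here: it simply cites Lemma 10 of \cite{Revers1}, which is essentially the bound $\left\vert T_{2n+1}(y)\right\vert\le(2n+1)\left\vert y\right\vert$ on $[-1,1]$ that you have re-derived from scratch, so your proof is a self-contained and fully elementary version of the same underlying fact. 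Two small points of hygiene: at $x=0$ the quotient must be read as a limit (or as the value of the polynomial $T_{2n+1}(y)/y$, which is legitimate since $T_{2n+1}$ is odd), and there the ratio equals $(2n+1)/(2n)$ exactly, so this is where the bound is attained; your closing remark that the bound is attained ``near $\pm 2n$'' is a slip --- at $x=\pm 2n$ the ratio is only $1/(2n)$ --- but it plays no role in the logic of the proof.
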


\begin{proof}
The assertion is an easy consequence from (\cite{Revers1}, Lemma 10).
\end{proof}

\begin{lemma}
\label{lemma34}Let $C>0$ be fixed, $\varepsilon>0$ and $n>\max\left(
C,\frac{C}{\varepsilon}\right) $. Then
\[
\left\Vert \frac{T_{2n+1}\left( \frac{x}{2n}\right) }{x}-\frac{\left(
-1\right) ^{n}\sin\left( \left( 2n+1\right) \frac{x}{2n}\right) }%
{x}\right\Vert _{L_{\infty}\left[ 0,C\right] }<\varepsilon.
\]
\end{lemma}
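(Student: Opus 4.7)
The plan is to reduce everything to the two preceding lemmas and the elementary estimate $|\sin a - \sin b|\le |a-b|$. First I would use the hypothesis $n>C$ to ensure that for $x\in[0,C]$ the argument $x/(2n)$ lies in $[0,1/2]$; this puts us squarely in the regime where Lemma \ref{lemma31} applies and, more importantly, where Lemma \ref{lemma32} gives the identity
\[
T_{2n+1}\!\left(\tfrac{x}{2n}\right)=(-1)^{n}\sin\!\Bigl((2n+1)\arcsin\tfrac{x}{2n}\Bigr).
\]
Pulling the $(-1)^n$ outside and combining the two terms, the expression to be estimated becomes $x^{-1}$ times the difference of two sines whose arguments differ by $(2n+1)\bigl(\arcsin\tfrac{x}{2n}-\tfrac{x}{2n}\bigr)$.

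Second, I would apply the Lipschitz bound $|\sin a-\sin b|\le |a-b|$ followed by Lemma \ref{lemma31} with $y=x/(2n)\in[0,1/2]$, yielding
\[
\left|\arcsin\tfrac{x}{2n}-\tfrac{x}{2n}\right|\le \tfrac{x^{2}}{4n^{2}}.
\]
Putting these together, the pointwise difference inside the norm is bounded by $(2n+1)x/(4n^{2})$. Since $x\in[0,C]$ this is at most $(2n+1)C/(4n^{2})$, which is easily verified to be strictly less than $C/n$ for every $n\ge 1$. The hypothesis $n>C/\varepsilon$ then gives $C/n<\varepsilon$, completing the estimate. The values at $x=0$ cause no trouble because both quotients extend continuously there.

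There is essentially no obstacle: the proof is a direct chain of three one-line estimates (the substitution via Lemma \ref{lemma32}, the sine-Lipschitz bound, and the $\arcsin$ estimate from Lemma \ref{lemma31}), and the only mild bookkeeping is checking that the two hypotheses $n>C$ and $n>C/\varepsilon$ are used respectively to keep $x/(2n)$ inside the domain of validity of Lemma \ref{lemma31} and to make the final bound $C/n$ fall below $\varepsilon$.
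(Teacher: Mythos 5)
Your argument is correct and coincides with the paper's own proof: the hypothesis $n>C$ forces $x/(2n)\in[0,1/2]$, Lemma \ref{lemma32} converts the Chebyshev polynomial into a sine, the Lipschitz bound for sine together with Lemma \ref{lemma31} gives the pointwise bound $(2n+1)x/(4n^2)\le C/n$, and $n>C/\varepsilon$ finishes. No differences worth noting.
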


\begin{proof}
For $x\in\left[ 0,C\right] $ we get $0\leq\frac{x}{2n}\leq\frac{C}{2n}%
<\frac{C}{2C}=\frac{1}{2}$. Then, using Lemma \ref{lemma31} and Lemma \ref{lemma32}, we estimate
\begin{align*}
& \left\vert \frac{T_{2n+1}\left( \frac{x}{2n}\right) }{x}-\frac{\left(
-1\right) ^{n}\sin\left( \left( 2n+1\right) \frac{x}{2n}\right) }%
{x}\right\vert \\
& = \frac{1}{x}\left\vert \sin\left( \left( 2n+1\right) \arcsin\frac{x}%
{2n}\right) -\sin\left( \left( 2n+1\right) \frac{x}{2n}\right)
\right\vert \\
& \leq \frac{2n+1}{x}\left\vert \arcsin\frac{x}{2n}-\frac{x}{2n}\right\vert \leq\frac{2n+1}{x}\left( \frac{x}{2n}\right) ^{2}\leq\frac{C}%
{n}<\varepsilon.
\end{align*}
\end{proof}

\begin{lemma}
\label{lemma35}Let $C>0$ be fixed, $\varepsilon>0$ and $n>\frac{1}%
{2\varepsilon}$. Then%
\[
\left\Vert \frac{\sin\left( \left( 2n+1\right) \frac{x}{2n}\right) }%
{x}-\frac{\sin x}{x}\right\Vert _{L_{\infty}\left[ 0,C\right] }%
<\varepsilon.
\]
\end{lemma}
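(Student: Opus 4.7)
The plan is to write $(2n+1)\frac{x}{2n} = x + \frac{x}{2n}$ so that the difference inside the norm becomes
\[
\frac{1}{x}\bigl(\sin\bigl(x+\tfrac{x}{2n}\bigr)-\sin x\bigr),
\]
and then apply the elementary Lipschitz bound $|\sin a-\sin b|\leq |a-b|$ to the numerator with $a=x+\frac{x}{2n}$ and $b=x$. This yields the pointwise estimate
\[
\frac{1}{x}\bigl|\sin\bigl(x+\tfrac{x}{2n}\bigr)-\sin x\bigr|\leq \frac{1}{x}\cdot\frac{x}{2n}=\frac{1}{2n},
\]
which is independent of $x$ and of the constant $C$; invoking the hypothesis $n>\frac{1}{2\varepsilon}$ then gives $\frac{1}{2n}<\varepsilon$, delivering the desired uniform bound.

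The one piece of care needed is at $x=0$, where both quotients are interpreted as the common limit $\lim_{x\to 0^+}\frac{\sin(\lambda x)}{x}=\lambda$. Here we get the values $\frac{2n+1}{2n}$ and $1$, whose difference is exactly $\frac{1}{2n}<\varepsilon$, so the bound extends to the endpoint by continuity and no separate argument is required.

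No real obstacle is present in this lemma; it is a routine estimate whose sole purpose, together with Lemma \ref{lemma34}, is to replace the Chebyshev factor $T_{2n+1}(x/(2n))/x$ by $(-1)^n\sin(x)/x$ on compact subsets of $[0,\infty)$ in the upcoming limit passage inside (\ref{mikeasymptotics}).
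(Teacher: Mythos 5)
Your proof is correct and follows essentially the same route as the paper: both apply the Lipschitz bound $\left\vert \sin a-\sin b\right\vert \leq\left\vert a-b\right\vert$ to obtain the $x$-independent estimate $\frac{1}{2n}<\varepsilon$. Your additional remark about the value at $x=0$ is a harmless extra precaution that the paper omits.
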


\begin{proof}
Let $x\in\left[ 0,C\right]$. Then by a standard argument we arrive at
\[
\left\vert \frac{\sin\left( \left( 2n+1\right) \frac{x}{2n}\right) }%
{x}-\frac{\sin x}{x}\right\vert \leq\frac{1}{x}\left\vert \left( 2n+1\right)
\frac{x}{2n}-x\right\vert = \frac{1}{2n}<\varepsilon.
\]
\end{proof}

\begin{lemma}
\label{lemma36}Let $C>0$ be fixed, $\varepsilon>0$ and $n>\max\left(
C,\frac{C}{\varepsilon},\frac{1}{2\varepsilon}\right) $. Then%
\[
\left\Vert \frac{T_{2n+1}\left( \frac{x}{2n}\right) }{x}-\left( -1\right)
^{n}\frac{\sin x}{x}\right\Vert _{L_{\infty}\left[ 0,C\right] }%
<2\varepsilon.
\]
\end{lemma}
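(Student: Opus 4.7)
The plan is a direct triangle-inequality argument combining Lemmas \ref{lemma34} and \ref{lemma35}. For each $x\in[0,C]$ I would insert the intermediate quantity $(-1)^{n}\sin((2n+1)x/(2n))/x$ and write
\[
\frac{T_{2n+1}(x/(2n))}{x}-(-1)^{n}\frac{\sin x}{x}
= \left(\frac{T_{2n+1}(x/(2n))}{x}-(-1)^{n}\frac{\sin((2n+1)x/(2n))}{x}\right)
+ (-1)^{n}\left(\frac{\sin((2n+1)x/(2n))}{x}-\frac{\sin x}{x}\right).
\]
Taking absolute values and the $L_\infty[0,C]$ norm, the triangle inequality gives an upper bound equal to the sum of the two individual $L_\infty[0,C]$ norms (the factor $(-1)^n$ in front of the second bracket has modulus $1$ and does not affect the bound).

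The hypothesis $n>\max(C,C/\varepsilon,1/(2\varepsilon))$ is precisely the conjunction of the hypotheses of the two prior lemmas: $n>\max(C,C/\varepsilon)$ is what Lemma \ref{lemma34} requires, yielding the first norm less than $\varepsilon$, and $n>1/(2\varepsilon)$ is what Lemma \ref{lemma35} requires, yielding the second norm less than $\varepsilon$. Adding these two bounds gives the claimed estimate $2\varepsilon$.

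There is essentially no obstacle here; the lemma is bookkeeping, stitching the two previous estimates together so that later in the paper one can replace $T_{2n+1}(x/(2n))/x$ by $(-1)^{n}\sin x/x$ uniformly on compacts (which is what is needed to pass to the limit in the asymptotic formula \eqref{mikeasymptotics} and identify the limit with $H(\alpha,\cdot)$ in Theorem \ref{Theorem31}). I would write the proof in a single displayed chain of inequalities and close it immediately.
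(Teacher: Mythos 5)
Your proof is correct and is exactly the paper's argument: the paper also proves Lemma \ref{lemma36} by the triangle inequality applied to the same decomposition, invoking Lemma \ref{lemma34} and Lemma \ref{lemma35}, with the hypothesis on $n$ being precisely the conjunction of their hypotheses. Nothing to add.
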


\begin{proof}
This follows directly by applying the triangle inequality combined together with Lemma
\ref{lemma34} and Lemma \ref{lemma35}.
\end{proof}

\begin{lemma}
\label{lemma37}Let $C>0$ be fixed, $\varepsilon>0$ and $n>\max\left(
C,\frac{C}{\varepsilon},\frac{1}{2\varepsilon}\right) .$ Then, for $\alpha
>0$, we have
\[
\left\Vert T_{2n+1}\left( \frac{x}{2n}\right) H_{1}\left( \alpha,x\right)
\right\Vert _{L_{\infty}\left[ 0,C\right] }\leq\left\Vert H\left(
\alpha,x\right) \right\Vert _{L_{\infty}\left[ 0,\infty\right)
}+2\varepsilon\cdot C\left( \alpha\right) .
\]
\end{lemma}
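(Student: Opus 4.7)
The proof proposal hinges on a simple algebraic decomposition plus the sup-norm bound from Lemma \ref{lemma36} and property (\ref{Properties1}c). The key preliminary observations are that
\[
H(\alpha,x) = \sin(x)\cdot H_{1}(\alpha,x)\quad\text{and}\quad xH_{1}(\alpha,x) = H_{2}(\alpha,x),
\]
both of which follow directly from the definitions of $H,H_{1},H_{2}$ by factoring $\sin x$ (respectively $x$) out of the integrals.

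With these in hand, I would first multiply and divide by $x$ in the quantity to be estimated, obtaining for $x\in(0,C]$
\begin{align*}
T_{2n+1}\!\left(\tfrac{x}{2n}\right)H_{1}(\alpha,x)
&= xH_{1}(\alpha,x)\cdot\frac{T_{2n+1}(x/(2n))}{x}\\
&= H_{2}(\alpha,x)\left[\frac{T_{2n+1}(x/(2n))}{x}-(-1)^{n}\frac{\sin x}{x}\right] + (-1)^{n}H_{1}(\alpha,x)\sin x\\
&= H_{2}(\alpha,x)\left[\frac{T_{2n+1}(x/(2n))}{x}-(-1)^{n}\frac{\sin x}{x}\right] + (-1)^{n}H(\alpha,x).
\end{align*}
The value at $x=0$ causes no issue, since $T_{2n+1}(0)=0$ makes the left-hand side vanish there.

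Taking absolute values, applying the triangle inequality, and bounding $H_{2}(\alpha,x)\leq C(\alpha)$ by (\ref{Properties1}c), I obtain
\[
\left|T_{2n+1}\!\left(\tfrac{x}{2n}\right)H_{1}(\alpha,x)\right|
\leq C(\alpha)\cdot\left\|\tfrac{T_{2n+1}(x/(2n))}{x}-(-1)^{n}\tfrac{\sin x}{x}\right\|_{L_{\infty}[0,C]} + |H(\alpha,x)|.
\]
Under the hypothesis $n>\max(C,C/\varepsilon,1/(2\varepsilon))$, Lemma \ref{lemma36} bounds the first factor by $2\varepsilon$, while trivially $|H(\alpha,x)|\leq\|H(\alpha,\cdot)\|_{L_{\infty}[0,\infty)}$. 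Taking the supremum over $x\in[0,C]$ yields the claim.

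There is no real obstacle here: the whole argument is an exercise in recognizing the right decomposition. The only subtle point is spotting that the extra factor $x$ produced by writing $T_{2n+1}(x/(2n)) = x\cdot[T_{2n+1}(x/(2n))/x]$ combines with $H_{1}$ to give $H_{2}$, which is globally bounded by $C(\alpha)$ — this is what converts the pointwise closeness of Lemma \ref{lemma36} into the stated inequality with the clean constant $2\varepsilon\cdot C(\alpha)$.
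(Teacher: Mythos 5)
Your proposal is correct and is essentially the paper's own argument: both rewrite $T_{2n+1}(x/(2n))H_{1}(\alpha,x)$ as $\frac{T_{2n+1}(x/(2n))}{x}H_{2}(\alpha,x)$, add and subtract $(-1)^{n}\frac{\sin x}{x}H_{2}(\alpha,x)=(-1)^{n}H(\alpha,x)$, and then combine Lemma \ref{lemma36} with the bound $H_{2}(\alpha,x)\leq C(\alpha)$ from (\ref{Properties1}c). The only cosmetic difference is that the paper justifies well-definedness at $x=0$ via (\ref{limitproperty}) and Lemma \ref{lemma33} rather than via $T_{2n+1}(0)=0$ (which alone is slightly too quick for $0<\alpha<1$, where $H_{1}(\alpha,x)\to\infty$ as $x\to0^{+}$), but your $H_{2}$-based decomposition already covers this.
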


\begin{proof}
First, we remark that for $\alpha>0$ the left-hand side in Lemma \ref{lemma37} is well defined by applying (\ref{limitproperty}) together with Lemma \ref{lemma33}. Using again the triangle inequality together with Lemma \ref{lemma36} and formula (\ref{Properties1}c), we arrive at
\begin{align*}
&\left\Vert T_{2n+1}\left( \frac{x}{2n}\right) H_{1}\left( \alpha,x\right)
\right\Vert _{L_{\infty}\left[ 0,C\right] } 
= \left\Vert \frac
{T_{2n+1}\left( \frac{x}{2n}\right) }{x}H_{2}\left( \alpha,x\right) \right\Vert
_{L_{\infty}\left[ 0,C\right] }\\
& \leq \left\Vert \frac{T_{2n+1}\left( \frac{x}{2n}\right) }{x}-\left(
-1\right) ^{n}\frac{\sin x}{x}\right\Vert _{L_{\infty}\left[ 0,C\right]
}\left\Vert H_{2}\left( \alpha,x\right) \right\Vert _{L_{\infty}\left[
0,C\right] } \\
& + \left\Vert \sin x\cdot H_{1}\left( \alpha,x\right) \right\Vert
_{L_{\infty}\left[ 0,C\right] } \leq 2\varepsilon C\left( \alpha\right) +\left\Vert H\left( \alpha,x\right)
\right\Vert _{L_{\infty}\left[ 0,\infty\right) }.
\end{align*}
\end{proof}

\noindent
Our first substantial result is now the following

\begin{lemma}
\label{lemma38}Let $\alpha>0$. Then
\[
\overline{\lim}_{n\rightarrow\infty}\left\Vert T_{2n+1}\left( \frac{x}%
{2n}\right) H_{1}\left( \alpha,x\right) \right\Vert _{L_{\infty}\left[
0,2n\right] }\leq\left\Vert H\left( \alpha,x\right) \right\Vert
_{L_{\infty}\left[ 0,\infty\right) }.
\]
\end{lemma}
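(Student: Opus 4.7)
The plan is to split the interval $[0,2n]$ into a fixed compact part $[0,C]$ and a tail part $[C,2n]$, bounding the quantity differently on each. On the compact part, Lemma \ref{lemma37} is already tailored to deliver essentially the bound $\|H(\alpha,\cdot)\|_{L_{\infty}[0,\infty)}$ up to an $\varepsilon$-error. On the tail, one has to exploit the decay of $H_{1}(\alpha,\cdot)$ to show the contribution is arbitrarily small, uniformly in $n$.

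First I would fix $\varepsilon>0$ and choose $C=C(\varepsilon,\alpha)$ large enough so that $C(\alpha)/C<\varepsilon$. The key pointwise estimate for the tail is the elementary bound
\[
H_{1}\left( \alpha,x\right) =\int_{0}^{\infty}\frac{t^{\alpha}}{\sinh t}\cdot\frac{x}{x^{2}+t^{2}}\,dt\leq\frac{1}{x}\int_{0}^{\infty}\frac{t^{\alpha}}{\sinh t}\,dt=\frac{C\left( \alpha\right) }{x},
\]
which follows from $x/(x^{2}+t^{2})\leq 1/x$. Combined with the classical $\left\vert T_{2n+1}(x/(2n))\right\vert \leq 1$ on $[-2n,2n]$, this yields
\[
\left\vert T_{2n+1}\!\left( \tfrac{x}{2n}\right) H_{1}\left( \alpha,x\right) \right\vert \leq H_{1}\left( \alpha,x\right) \leq \frac{C\left( \alpha\right) }{C}<\varepsilon
\]
for every $x\in[C,2n]$, with no dependence on $n$.

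Next I would take $n>\max\left( C,C/\varepsilon,1/(2\varepsilon)\right)$ and apply Lemma \ref{lemma37} on $[0,C]$ to get
\[
\left\Vert T_{2n+1}\!\left( \tfrac{x}{2n}\right) H_{1}\left( \alpha,x\right) \right\Vert _{L_{\infty}[0,C]}\leq\left\Vert H\left( \alpha,x\right) \right\Vert _{L_{\infty}[0,\infty)}+2\varepsilon\cdot C\left( \alpha\right) .
\]
Combining the two regions gives
\[
\left\Vert T_{2n+1}\!\left( \tfrac{x}{2n}\right) H_{1}\left( \alpha,x\right) \right\Vert _{L_{\infty}[0,2n]}\leq\left\Vert H\left( \alpha,\cdot\right) \right\Vert _{L_{\infty}[0,\infty)}+\varepsilon\left( 2C\left( \alpha\right) +1\right) ,
\]
and passing to $\overline{\lim}_{n\to\infty}$ followed by $\varepsilon\to 0^{+}$ concludes the proof.

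I do not expect a serious obstacle here: everything rests on the two ingredients already in place, namely the uniform convergence $T_{2n+1}(x/(2n))/x\to(-1)^{n}\sin(x)/x$ on compacta (Lemma \ref{lemma36}) and the integrability of $t^{\alpha}/\sinh t$ which forces $H_{1}(\alpha,x)$ to decay like $1/x$. The only point requiring a little care is to coordinate the choice of $C$ with the hypotheses on $n$ in Lemma \ref{lemma37}, so that the same $n$ simultaneously handles both subintervals; this is purely a bookkeeping step.
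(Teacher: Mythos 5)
Your proposal is correct and follows essentially the same route as the paper: split $[0,2n]$ at a large fixed $C$ with $C(\alpha)/C<\varepsilon$, invoke Lemma \ref{lemma37} on $[0,C]$, and bound the tail by $C(\alpha)/C$ (your direct estimate $H_{1}(\alpha,x)\leq C(\alpha)/x$ is the same computation the paper phrases via $H_{2}(\alpha,x)=xH_{1}(\alpha,x)\leq C(\alpha)$). No gaps.
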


\begin{proof}
Let $\varepsilon>0,C>\frac{C\left( \alpha\right) }{\varepsilon}$ and
$n>\max\left( C,\frac{C}{\varepsilon},\frac{1}{2\varepsilon}\right)$. Then%
\begin{align*}
&\left\Vert T_{2n+1}\left( \frac{x}{2n}\right) H_{1}\left( \alpha,x\right)
\right\Vert _{L_{\infty}\left[ 0,2n\right] } \\
& \leq \left\Vert
T_{2n+1}\left( \frac{x}{2n}\right) H_{1}\left( \alpha,x\right) \right\Vert
_{L_{\infty}\left[ 0,C\right] } + \left\Vert T_{2n+1}\left( \frac{x}{2n}\right) H_{1}\left(
\alpha,x\right) \right\Vert _{L_{\infty}\left[ C,2n\right] }.
\end{align*}
Using (\ref{Properties1}c), the latter part can be estimated to
\begin{align*}
\left\Vert T_{2n+1}\left( \frac{x}{2n}\right) H_{1}\left( \alpha,x\right)
\right\Vert _{L_{\infty}\left[ C,2n\right] } & =\left\Vert \frac
{T_{2n+1}\left( \frac{x}{2n}\right) }{x}H_{2}\left( \alpha,x\right)
\right\Vert _{L_{\infty}\left[ C,2n\right] }\\
& \leq\frac{1}{C}\cdot C\left( \alpha\right) <\varepsilon.
\end{align*}
Combined together with the previous estimate and Lemma \ref{lemma37}, we
finally get
\[
\left\Vert T_{2n+1}\left( \frac{x}{2n}\right) H_{1}\left( \alpha,x\right)
\right\Vert _{L_{\infty}\left[ 0,2n\right] }\leq\left\Vert H\left(
\alpha,x\right) \right\Vert _{L_{\infty}\left[ 0,\infty\right)
}+2\varepsilon\cdot C\left( \alpha\right) +\varepsilon.
\]
By taking the $\overline{\lim}$ the result follows.
\end{proof}

\noindent
Now, we are turning to the $\underline{\lim}$ case.

\begin{lemma}
\label{lemma39}Let $\alpha>0$ and $C>0$ be fixed. Then
\[
\underline{\lim}_{n\rightarrow\infty}\left\Vert T_{2n+1}\left( \frac{x}%
{2n}\right) H_{1}\left( \alpha,x\right) \right\Vert _{L_{\infty}\left[
0,2n\right] }\geq\left\Vert H\left( \alpha,x\right) \right\Vert
_{L_{\infty}\left[ 0,C\right] }.
\]
\end{lemma}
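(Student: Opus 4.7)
The plan is to compare the sup norm on the larger interval $[0,2n]$ with the sup norm on the fixed compact $[0,C]$, on which the previous lemmas already give uniform convergence. First I would fix $n$ large enough that $2n\geq C$, so that
\[
\left\Vert T_{2n+1}\!\left(\tfrac{x}{2n}\right)H_{1}(\alpha,x)\right\Vert_{L_{\infty}[0,2n]}
\geq \left\Vert T_{2n+1}\!\left(\tfrac{x}{2n}\right)H_{1}(\alpha,x)\right\Vert_{L_{\infty}[0,C]}.
\]
So it suffices to show that the right-hand side converges to $\left\Vert H(\alpha,\cdot)\right\Vert_{L_{\infty}[0,C]}$.

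For this, the key identity is
\[
T_{2n+1}\!\left(\tfrac{x}{2n}\right)H_{1}(\alpha,x)
= \frac{T_{2n+1}\!\left(\tfrac{x}{2n}\right)}{x}\cdot H_{2}(\alpha,x),
\]
which follows from $H_{2}(\alpha,x)=x\,H_{1}(\alpha,x)$ (immediate from the definitions). Similarly, the target function factors as $H(\alpha,x)=\sin x\cdot H_{1}(\alpha,x)=\tfrac{\sin x}{x}\,H_{2}(\alpha,x)$. Therefore
\[
\left|T_{2n+1}\!\left(\tfrac{x}{2n}\right)H_{1}(\alpha,x)-(-1)^{n}H(\alpha,x)\right|
\leq \left|\frac{T_{2n+1}\!\left(\tfrac{x}{2n}\right)}{x}-(-1)^{n}\frac{\sin x}{x}\right|\cdot H_{2}(\alpha,x).
\]
Applying Lemma \ref{lemma36} to the first factor (which is bounded by $2\varepsilon$ on $[0,C]$ for $n$ sufficiently large) and the bound $H_{2}(\alpha,x)\leq C(\alpha)$ from (\ref{Properties1}c) to the second, I obtain
\[
\left\Vert T_{2n+1}\!\left(\tfrac{x}{2n}\right)H_{1}(\alpha,x)-(-1)^{n}H(\alpha,x)\right\Vert_{L_{\infty}[0,C]}\leq 2\varepsilon\cdot C(\alpha).
\]

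From the reverse triangle inequality for the sup norm (using that $\left\Vert(-1)^{n}H(\alpha,\cdot)\right\Vert_{L_{\infty}[0,C]}=\left\Vert H(\alpha,\cdot)\right\Vert_{L_{\infty}[0,C]}$), this gives
\[
\left\Vert T_{2n+1}\!\left(\tfrac{x}{2n}\right)H_{1}(\alpha,x)\right\Vert_{L_{\infty}[0,C]}
\geq \left\Vert H(\alpha,\cdot)\right\Vert_{L_{\infty}[0,C]}-2\varepsilon\cdot C(\alpha).
\]
Combining with the first displayed inequality and taking $\underline{\lim}_{n\to\infty}$ yields
\[
\underline{\lim}_{n\to\infty}\left\Vert T_{2n+1}\!\left(\tfrac{x}{2n}\right)H_{1}(\alpha,x)\right\Vert_{L_{\infty}[0,2n]}\geq \left\Vert H(\alpha,\cdot)\right\Vert_{L_{\infty}[0,C]}-2\varepsilon\cdot C(\alpha),
\]
and since $\varepsilon>0$ is arbitrary the claim follows.

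There is no real obstacle here: the lemma is a routine consequence of the uniform-convergence machinery (Lemma \ref{lemma36}) already established and the elementary factorization $H_{2}=xH_{1}$ and $H=\sin x\cdot H_{1}$. The only small point of care is that $C$ is fixed while $2n\to\infty$, so one must make sure to choose $n$ large enough that $[0,C]\subseteq[0,2n]$ and that Lemma \ref{lemma36} applies with the chosen $\varepsilon$; both are automatic. The sign factor $(-1)^{n}$ is harmless because it is absorbed into the absolute value defining the sup norm.
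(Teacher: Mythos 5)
Your proof is correct and follows essentially the same route as the paper: restrict the sup norm from $[0,2n]$ to the fixed interval $[0,C]$, use the factorization through $H_{2}(\alpha,x)=xH_{1}(\alpha,x)$ together with Lemma \ref{lemma36} and the bound $H_{2}(\alpha,x)\leq C(\alpha)$ from (\ref{Properties1}c), and conclude via the reverse triangle inequality and the arbitrariness of $\varepsilon$. No discrepancies to report.
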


\begin{proof}
Let $C>0$, $\varepsilon>0$ and $n>\max\left( C,\frac
{C}{\varepsilon},\frac{1}{2\varepsilon}\right) .$ Then, by applying again the
triangle inequality and combining together with Lemma \ref{lemma36} and
(\ref{Properties1}c), we estimate
\begin{align*}
& \left\Vert T_{2n+1}\left( \frac{x}{2n}\right) H_{1}\left( \alpha
,x\right) \right\Vert _{L_{\infty}\left[ 0,2n\right] }\\
& \geq \left\Vert T_{2n+1}\left( \frac{x}{2n}\right) H_{1}\left(
\alpha,x\right) \right\Vert _{L_{\infty}\left[ 0,C\right] }\\
& \geq \left\Vert H\left( \alpha,x\right) \right\Vert _{L_{\infty}\left[
0,C\right] }-\left\Vert \left( \frac{T_{2n+1}\left( \frac{x}{2n}\right)
}{x}-\frac{\left( -1\right) ^{n}\sin x}{x}\right) H_{2}\left(
\alpha,x\right) \right\Vert _{L_{\infty}\left[ 0,C\right] }\\
& \geq \left\Vert H\left( \alpha,x\right) \right\Vert _{L_{\infty}\left[
0,C\right] }-2\varepsilon\left\Vert H_{2}\left( \alpha,x\right) \right\Vert
_{L_{\infty}\left[ 0,C\right] }\\
& \geq \left\Vert H\left( \alpha,x\right) \right\Vert _{L_{\infty}\left[
0,C\right] }-2\varepsilon\cdot C\left( \alpha\right) .
\end{align*}
Now, by taking $\underline{\lim}$ we establish the result.
\end{proof}

\noindent
Our second substantial result is the following

\begin{lemma}
\label{lemma310}Let $\alpha>0.$ Then
\[
\underline{\lim}_{n\rightarrow\infty}\left\Vert T_{2n+1}\left( \frac{x}%
{2n}\right) H_{1}\left( \alpha,x\right) \right\Vert _{L_{\infty}\left[
0,2n\right] }\geq\left\Vert H\left( \alpha,x\right) \right\Vert
_{L_{\infty}\left[ 0,\infty\right) }.
\]
\end{lemma}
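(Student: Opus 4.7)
The plan is to deduce Lemma \ref{lemma310} from Lemma \ref{lemma39} by letting the parameter $C$ tend to infinity. First I would observe that $H(\alpha,\cdot)$ is continuous on $[0,\infty)$ (this follows from a routine dominated convergence argument applied to the integral defining $H$, using $t^{\alpha}/\sinh t$ as the integrable majorant), so that $L_{\infty}$-norms coincide with ordinary suprema. Consequently, the function $C\mapsto\|H(\alpha,\cdot)\|_{L_{\infty}[0,C]}$ is monotone nondecreasing and
\[
\sup_{C>0}\|H(\alpha,\cdot)\|_{L_{\infty}[0,C]}=\|H(\alpha,\cdot)\|_{L_{\infty}[0,\infty)}.
\]
Note also that this supremum is finite because (\ref{Properties1}c) and (\ref{Properties1}d) give $|H(\alpha,x)|\le H_{2}(\alpha,x)\le C(\alpha)$ uniformly in $x>0$.

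Given an arbitrary $\varepsilon>0$, I would then pick $C_{\varepsilon}>0$ large enough that
\[
\|H(\alpha,\cdot)\|_{L_{\infty}[0,C_{\varepsilon}]}\ge\|H(\alpha,\cdot)\|_{L_{\infty}[0,\infty)}-\varepsilon,
\]
and apply Lemma \ref{lemma39} with $C=C_{\varepsilon}$ to conclude
\[
\underline{\lim}_{n\to\infty}\left\Vert T_{2n+1}\!\left(\tfrac{x}{2n}\right)H_{1}(\alpha,x)\right\Vert_{L_{\infty}[0,2n]}\ge\|H(\alpha,\cdot)\|_{L_{\infty}[0,\infty)}-\varepsilon.
\]
Since $\varepsilon$ is arbitrary, letting $\varepsilon\to 0^{+}$ yields the desired inequality.

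I do not expect any real obstacle here: the substantive analytic work is already contained in Lemma \ref{lemma39}, and this lemma merely upgrades the local statement (for each fixed $C$) to a global one by exploiting monotonicity in $C$. The only point worth flagging is the preliminary observation that the norm on $[0,\infty)$ is finite and equals the supremum of the truncated norms — without this, the $\varepsilon$-argument would be vacuous or break down.
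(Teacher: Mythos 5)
Your proposal is correct and follows essentially the same route as the paper: both deduce the lemma from Lemma \ref{lemma39} by letting $C\to\infty$. The only cosmetic difference is that you justify $\left\Vert H\left(\alpha,\cdot\right)\right\Vert_{L_{\infty}\left[0,C\right]}\rightarrow\left\Vert H\left(\alpha,\cdot\right)\right\Vert_{L_{\infty}\left[0,\infty\right)}$ by the general supremum-over-exhaustion property, whereas the paper uses the quantitative tail bound $\left\Vert H\left(\alpha,\cdot\right)\right\Vert_{L_{\infty}\left[C,\infty\right)}\leq C\left(\alpha\right)/C$ coming from (\ref{Properties1}c).
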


\begin{proof}Let $\varepsilon>0$ and $C>\frac{C\left( \alpha\right) }{\varepsilon
}$. Then, starting with the right-hand side in Lemma \ref{lemma310}, we
estimate
\[
\left\Vert H\left( \alpha,x\right) \right\Vert _{L_{\infty}\left[
0,\infty\right) }\leq\left\Vert H\left( \alpha,x\right) \right\Vert
_{L_{\infty}\left[ 0,C\right] }+\left\Vert H\left( \alpha,x\right)
\right\Vert _{L_{\infty}\left[ C,\infty\right) }.
\]
Using again (\ref{Properties1}c), the latter part can be estimated to
\begin{align*}
\left\Vert H\left( \alpha,x\right) \right\Vert _{L_{\infty}\left[
C,\infty\right) } & =\left\Vert \frac{\sin x}{x}H_{2}\left( \alpha
,x\right) \right\Vert _{L_{\infty}\left[ C,\infty\right) }\\
& \leq\frac{1}{C}\cdot C\left( \alpha\right) <\varepsilon.
\end{align*}
Combined together with Lemma \ref{lemma39} and the previous estimate, we
arrive at
\begin{align*}
\left\Vert H\left( \alpha,x\right) \right\Vert _{L_{\infty}\left[
0,\infty\right) }-\varepsilon & \leq\left\Vert H\left( \alpha,x\right)
\right\Vert _{L_{\infty}\left[ 0,C\right] }\\
& \leq\underline{\lim}_{n\rightarrow\infty}\left\Vert T_{2n+1}\left( \frac
{x}{2n}\right) H_{1}\left( \alpha,x\right) \right\Vert _{L_{\infty}\left[
0,2n\right] }.
\end{align*}
Since the last expression holds for every $\varepsilon>0$ we establish the result.
\end{proof}

\begin{proof}[Proof of Theorem \ref{Theorem31}.] Let $\alpha>0$. Then%
\begin{align*}
& \left\Vert T_{2n+1}\left( x\right) \int_{0}^{\infty}\frac{t^{\alpha}%
}{\sinh\left( t\right) }\frac{2nx}{\left( 2nx\right) ^{2}+t^{2}%
}dt\right\Vert _{L_{\infty}\left[ 0,1\right] }\\
& = \left\Vert T_{2n+1}\left( \frac{x}{2n}\right) \int_{0}^{\infty}%
\frac{t^{\alpha}}{\sinh\left( t\right) }\frac{x}{x^{2}+t^{2}}dt\right\Vert
_{L_{\infty}\left[ 0,2n\right] }\\
& = \left\Vert T_{2n+1}\left( \frac{x}{2n}\right) H_{1}\left( \alpha
,x\right) \right\Vert _{L_{\infty}\left[ 0,2n\right] }.
\end{align*}
Combining now Lemma \ref{lemma38} and Lemma \ref{lemma310} together with (\ref{mikeasymptotics}), gives the result and we are finished.
\end{proof}

\begin{proof}[Proof of Theorem \ref{Theorem32}.]
Let $\alpha>0.$ From (\ref{mikeasymptotics}) it follows that for every
$\varepsilon>0$ we can find some $n_{0}=n_{0}\left( \varepsilon\right) $, 
such that for all $n>n_{0}$%
\begin{align*}
& \left\Vert \left( 2n\right) ^{\alpha}\left( \left\vert x\right\vert
^{\alpha}-P_{2n}^{\left( 2\right) }\left( x\right) \right) -\left(
-1\right) ^{n}\frac{2}{\pi}\sin\frac{\pi\alpha}{2}\left( 1-\frac{1}%
{2n+1}\right) \right. \\
& \cdot \left. T_{2n+1}\left( x\right) \int_{0}^{\infty}\frac{t^{\alpha}%
}{\sinh t}\frac{2nx}{\left( 2nx\right) ^{2}+t^{2}}dt\right\Vert _{L_{\infty
}\left[ 0,1\right] } <\varepsilon.
\end{align*}
Let $C>0$ be fixed, $\varepsilon>0$ and $n>\max\left( C,\frac{C}{\varepsilon
},\frac{1}{2\varepsilon},\frac{\alpha}{2},n_{0}\right) $. Then%
\begin{align}
& \left\Vert \left( 2n\right) ^{\alpha}P_{2n}^{\left( 2\right) }\left(
\frac{x}{2n}\right) -H_{\alpha}\left( x\right) \right\Vert _{L_{\infty
}\left[ 0,C\right] }\nonumber\\
& = \left\Vert \frac{2}{\pi}\sin\frac{\pi\alpha}{2}H\left( \alpha
,2nx\right) -\left( 2n\right) ^{\alpha}\left( \left\vert x\right\vert
^{\alpha}-P_{2n}^{\left( 2\right) }\left( x\right) \right) \right\Vert
_{L_{\infty}\left[ 0,\frac{C}{2n}\right] }\label{Estimate1}\\
& \leq \frac{2}{\pi}\left\vert \sin\frac{\pi\alpha}{2}\right\vert \left\Vert
H\left( \alpha,x\right) -\left( -1\right) ^{n}\frac{2n}{2n+1}%
T_{2n+1}\left( \frac{x}{2n}\right) H_{1}\left( \alpha,x\right) \right\Vert
_{L_{\infty}\left[ 0,C\right] }+\varepsilon.\nonumber
\end{align}
We proceed further by use of (\ref{Properties1}c), Lemma \ref{lemma33} and
Lemma \ref{lemma36}.%
\begin{align*}
& \left\Vert H\left( \alpha,x\right) -\left( -1\right) ^{n}\frac
{2n}{2n+1}T_{2n+1}\left( \frac{x}{2n}\right) H_{1}\left( \alpha,x\right)
\right\Vert _{L_{\infty}\left[ 0,C\right] }\\
& = \left\Vert H_{1}\left( \alpha,x\right) \left( \sin x-\left(
-1\right) ^{n}\frac{2n}{2n+1}T_{2n+1}\left( \frac{x}{2n}\right) \right)
\right\Vert _{L_{\infty}\left[ 0,C\right] }\\
& \leq C\left( \alpha\right) \left( \left\Vert \frac{T_{2n+1}\left(
\frac{x}{2n}\right) }{x}-\left( -1\right) ^{n}\frac{\sin x}{x}\right\Vert
_{L_{\infty}\left[ 0,C\right] }+\frac{1}{2n+1}\left\Vert \frac
{T_{2n+1}\left( \frac{x}{2n}\right) }{x}\right\Vert _{L_{\infty}\left[
0,C\right] }\right) \\
& \leq C\left( \alpha\right) \left( 2\varepsilon+\frac{1}{2n}\right) \\
& \leq C\left( \alpha\right) 3\varepsilon.
\end{align*}
Combining together with (\ref{Estimate1}), we obtain for every $\varepsilon>0$
and $n$ sufficiently large,
\[
\left\Vert \left( 2n\right) ^{\alpha}P_{2n}^{\left( 2\right) }\left(
\frac{\cdot}{2n}\right) -H_{\alpha}\right\Vert _{L_{\infty}\left[
0,C\right] }\leq\frac{2}{\pi}\left\vert \sin\frac{\pi\alpha}{2}\right\vert
C\left( \alpha\right) 3\varepsilon+\varepsilon.
\]
Since any compact set $K$ in $\left[ 0,\infty\right) $ can be included in
some interval $\left[ 0,C\right] $ the result is established.
\end{proof}

\begin{proof}[Proof of Theorem \ref{Theorem33}.]
The expansion of $H_{\alpha}$ into the interpolating series (\ref{Entire5})
follows after some routine arguments from (\cite{Ganzburg1}, Formula 4.14). The special case (\ref{Entire6}) can be directly seen from (\cite{Ganzburg1}, Formula 4.16). The fact that $H_{\alpha}$ is an entire function of exponential type $1$ can
now be deduced from (\cite{Timan}, p. 183, Formula 15). The interpolation property is an easy consequence of (\ref{Entire4}).
\end{proof}

\section{The Envelope function}

In this section we consider the envelope error function $H_{1}\left(
\alpha,\cdot\right) $ with respect to $\left\vert H\left( \alpha
,\cdot\right) \right\vert $. Our next objective is to establish an
asymptotics for $\left\Vert H_{1}\left( \alpha,\cdot\right) \right\Vert
_{L_{\infty}\left[ 0,\infty\right) }$ when $\alpha\rightarrow\infty$. We show

\begin{theorem}
\label{Theorem41}Let $\alpha\geq2.$ Then, we have%
\[
\frac{C\left( \alpha\right) }{1+2\alpha}\left( 1-\frac{1}{\sqrt{\alpha}%
}\right) \leq H_{1}\left( \alpha,\alpha\right) \leq\left\Vert H_{1}\left(
\alpha,\cdot\right) \right\Vert _{L_{\infty}\left[ 0,\infty\right) }%
\leq\frac{C\left( \alpha\right) }{1+2\alpha}\left( 1+\frac{2}{\sqrt{\alpha
}}\right) .
\]
\end{theorem}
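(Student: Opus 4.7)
My plan decomposes Theorem~\ref{Theorem41} into the three constituent inequalities; the middle inequality $H_1(\alpha,\alpha) \leq \Vert H_1(\alpha,\cdot)\Vert_{L_\infty[0,\infty)}$ is immediate from the definition of supremum.

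For the upper bound I would use the elementary AM--GM inequality $\frac{x}{x^2+t^2} \leq \frac{1}{2t}$ (from $x^2+t^2 \geq 2xt$), which yields the $x$-uniform estimate $H_1(\alpha,x) \leq \frac{1}{2}\int_0^\infty t^{\alpha-1}/\sinh t\, dt = C(\alpha-1)/2$. It then remains to show $\frac{C(\alpha-1)}{2} \leq \frac{C(\alpha)}{1+2\alpha}(1+2/\sqrt{\alpha})$ for $\alpha \geq 2$. Expanding $1/\sinh t = 2\sum_{k\geq 0}e^{-(2k+1)t}$ rewrites $C(\beta) = 2\Gamma(\beta+1)\, S_{\beta+1}$ with $S_\gamma := \sum_{k\geq 0}(2k+1)^{-\gamma}$, and the target inequality reduces to $\frac{1+2\alpha}{2\alpha}\cdot\frac{S_\alpha}{S_{\alpha+1}} \leq 1 + \frac{2}{\sqrt{\alpha}}$. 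The left-hand side is $(1 + \frac{1}{2\alpha})(1 + O(3^{-\alpha}))$, comfortably bounded by $1 + 2/\sqrt{\alpha}$ for every $\alpha \geq 2$.

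For the lower bound I would use the algebraic decomposition
\[
\frac{\alpha}{\alpha^2+t^2} = \frac{1}{1+2\alpha} + \frac{\alpha^2+\alpha-t^2}{(1+2\alpha)(\alpha^2+t^2)},
\]
which yields $H_1(\alpha,\alpha) = \frac{C(\alpha)+J(\alpha)}{1+2\alpha}$ with
\[
J(\alpha) := \int_0^\infty \frac{t^\alpha(\alpha^2+\alpha-t^2)}{\sinh t\,(\alpha^2+t^2)}\,dt.
\]
Since the numerator $\alpha^2+\alpha-t^2$ vanishes at $t=\sqrt{\alpha(\alpha+1)}$, which to leading order coincides with the mode of the weight $t^\alpha/\sinh t$, the integral $J(\alpha)$ enjoys strong cancellation and the target reduces to the moment bound $J(\alpha) \geq -C(\alpha)/\sqrt{\alpha}$. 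After the substitution $t = \alpha u$ this becomes a statement about a first-moment integral against the weight $u^\alpha/\sinh(\alpha u)$, which concentrates at $u=1$ with width of order $1/\sqrt{\alpha}$. Properties (\ref{Properties2}a)--(\ref{Properties2}c) give exact values for the relevant Gamma-type integrals, and the Stirling lower bound (\ref{Properties2}d) calibrates the peak value of the weight against $C(\alpha)$; together they supply the sharp, non-asymptotic machinery needed.

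The main obstacle will be keeping the constants tight enough to yield the precise rates $1-1/\sqrt{\alpha}$ (on the lower side) and $1+2/\sqrt{\alpha}$ (on the upper side) uniformly for every $\alpha \geq 2$, not merely asymptotically. A naive Cauchy--Schwarz bound on $|J(\alpha)|$ yields $|J(\alpha)| \leq K\cdot C(\alpha)/\sqrt{\alpha}$ with $K$ strictly greater than $1$, which when combined with the prefactor $1/(1+2\alpha)$ spoils the clean lower bound. The symmetric identity (\ref{Properties2}a), giving the exact value $c^\alpha e^{-\alpha c}/\alpha$ for both $\int_0^c x^{\alpha-1}e^{-\alpha x}(1-x)\,dx$ and $\int_c^\infty x^{\alpha-1}e^{-\alpha x}(x-1)\,dx$, appears to be the right device: it should allow one to compute rather than crudely estimate the moments of $u^\alpha/\sinh(\alpha u)$ that arise in bounding $J(\alpha)$, thereby preserving the sharp $1/\sqrt{\alpha}$ rate down to the stated constant.
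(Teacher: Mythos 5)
Your three-way decomposition is exactly the paper's (its Lemmas \ref{lemma46}, \ref{lemma47}, \ref{lemma48}), and your upper bound begins identically: the estimate $\frac{x}{x^{2}+t^{2}}\leq\frac{1}{2t}$ giving $\left\Vert H_{1}\left( \alpha,\cdot\right) \right\Vert _{L_{\infty}\left[ 0,\infty\right) }\leq\frac{1}{2}C\left( \alpha-1\right) $ is Lemma \ref{lemma47} verbatim. Where you genuinely depart is the comparison $\frac{1}{2}C\left( \alpha-1\right) \leq\frac{C\left( \alpha\right) }{1+2\alpha}\left( 1+\frac{2}{\sqrt{\alpha}}\right) $: the paper (Lemma \ref{lemma48}) proves this through a rather long chain of integral estimates resting on Lemmas \ref{lemma41}, \ref{lemma42}, \ref{lemma44} and (\ref{Properties2}a)--(\ref{Properties2}d), whereas your exact evaluation $C\left( \beta\right) =2\Gamma\left( \beta+1\right) S_{\beta+1}$ (the paper only invokes this zeta-type identity later, in Lemma \ref{lemma54}) collapses it to $\frac{1+2\alpha}{2\alpha}\cdot\frac{S_{\alpha}}{S_{\alpha+1}}\leq1+\frac{2}{\sqrt{\alpha}}$. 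That inequality is elementary and your claim about it is correct, since $S_{\alpha+1}\geq1$ and $S_{\alpha}-1=\sum_{k\geq1}\left( 2k+1\right) ^{-\alpha}\leq c\,3^{-\alpha}$ with an explicit small $c$; just write that constant out so the bound is non-asymptotic on all of $[2,\infty)$. This is a genuine simplification of the hardest-looking third of the theorem.

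The lower bound, however, is still a plan rather than a proof. Your splitting of $\frac{\alpha}{\alpha^{2}+t^{2}}$ is, after the substitution $t=\alpha u$, identical to the paper's centering in Lemma \ref{lemma46}, and $J\left( \alpha\right) \geq-C\left( \alpha\right) /\sqrt{\alpha}$ is indeed the right target; but that inequality, with the constant exactly $1$ in front of $1/\sqrt{\alpha}$, is the entire content of the lemma and you have not established it. Your instinct that (\ref{Properties2}a) is the right device is correct and is precisely how the paper closes the gap: after rescaling, the sign-changing factor is $\frac{1+1/\alpha-u^{2}}{1+u^{2}}$; one discards the favorable contribution on $\left[ 0,1\right] $ together with a further negative term, writes the unfavorable tail as $2\int_{1}^{\infty}u^{\alpha-1}e^{-\alpha u}\left( u-1\right) \frac{1}{1-e^{-2\alpha u}}\frac{u\left( u+1\right) }{u^{2}+1}du$, bounds the last two factors by the constants $\frac{1}{1-e^{-2}}$ and $\frac{1+\sqrt{2}}{2}$ (the latter is the paper's Lemma \ref{lemma45}, a small pointwise bound you will also need), evaluates the remaining first moment exactly as $e^{-\alpha}/\alpha$ by (\ref{Properties2}a) with $c=1$, and finally compares with $C\left( \alpha\right) $ via the Stirling bound (\ref{Properties2}d) and $\frac{1}{\sinh\left( \alpha u\right) }\geq2e^{-\alpha u}$; the constant $\frac{1+\sqrt{2}}{1-e^{-2}}\approx2.79$ sits safely below $2\sqrt{2\pi}\approx5.01$, which is what rescues the coefficient $1$. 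Until this chain (or an equivalent sharp argument) is written down, the leftmost inequality of the theorem remains unproved.
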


\begin{figure}[th]
\begin{center}
\includegraphics[width=0.45\textwidth]{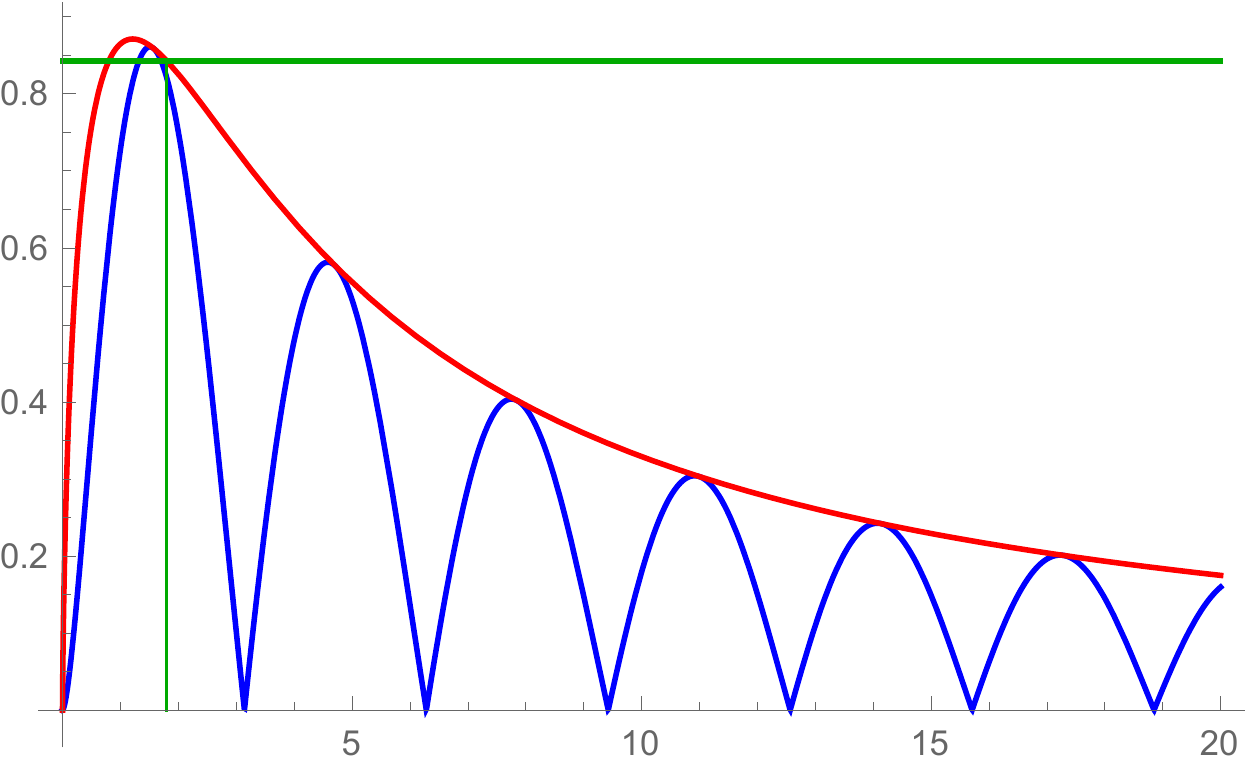} \hfill
\includegraphics[width=0.45\textwidth]{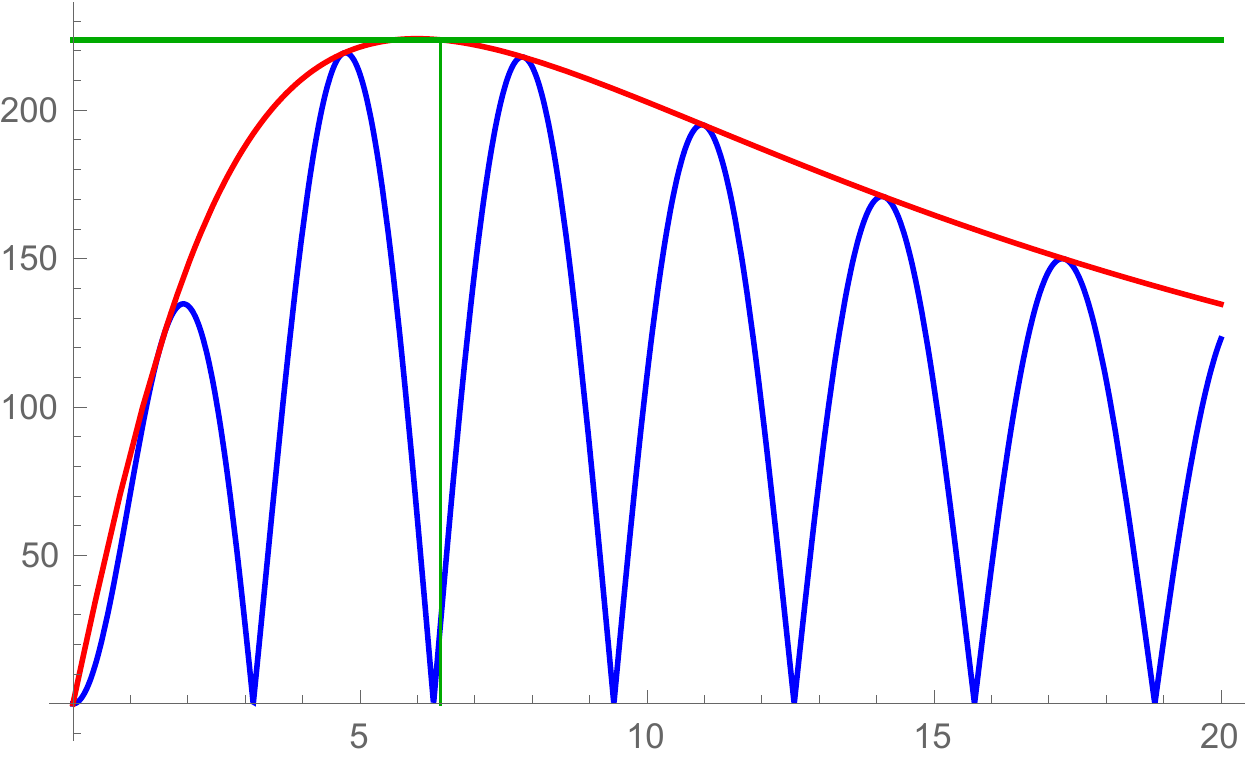}
\end{center}
\caption{The error function $\left\vert H\left( \alpha,\cdot\right)
\right\vert $, its envelope $H_{1}\left( \alpha,\cdot\right) $ and the point evaluation $H_{1}\left( \alpha,\alpha \right) $.}
\label{Figure2}
\end{figure}

\noindent
Figure \ref{Figure2} shows the functions $\left\vert
H\left( \alpha,\cdot\right) \right\vert $ and $H_{1}\left( \alpha
,\cdot\right) $ as well as their point evaluations for values $\alpha=1.8$
and $\alpha=6.4$. The figure suggests that a useful lower estimate for
$\left\Vert H_{1}\left( \alpha,\cdot\right) \right\Vert $ should be
derivable when determining its point evaluation, i.e. $H_{1}\left(
\alpha,\alpha\right) $, at least for large values for $\alpha$.

\medskip
\noindent
We start proving Theorem \ref{Theorem41} by splitting it in several
Lemmas. First, we present the following five Lemmas without proof. They can be
derived by some standard analysis arguments.

\begin{lemma}
\label{lemma41}The function $f\left( x\right) =\left( 1+\frac{1}{x}\right)
^{x}$ is monotonically increasing in $x\in\left( 0,\infty\right) $ and
$f\left( x\right) \leq e.$
\end{lemma}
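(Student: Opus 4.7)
The plan is to reduce both claims to an elementary inequality by passing to the logarithm and differentiating. Set $g(x)=\log f(x)=x\log(1+1/x)$ for $x>0$; the monotonicity of $f$ is equivalent to $g'(x)>0$, and the bound $f(x)\le e$ is equivalent to $g(x)<1$.

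For the monotonicity, I would compute
\[
g'(x)=\log\!\left(1+\tfrac{1}{x}\right)-\frac{1}{x+1},
\]
and then substitute $u=1/x>0$ so that the claim $g'(x)>0$ becomes $\log(1+u)>u/(1+u)$. To prove this, introduce the auxiliary function $h(u)=\log(1+u)-u/(1+u)$, note $h(0)=0$, and compute
\[
h'(u)=\frac{1}{1+u}-\frac{1}{(1+u)^{2}}=\frac{u}{(1+u)^{2}}>0\quad(u>0),
\]
so $h(u)>0$ for all $u>0$ and therefore $g'(x)>0$ on $(0,\infty)$, giving the monotonicity.

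For the upper bound $f(x)\le e$, the cleanest route is the classical inequality $\log(1+u)<u$ for $u>0$, which (with $u=1/x$) yields $g(x)=x\log(1+1/x)<1$, hence $f(x)<e$. Alternatively, one could combine the monotonicity just proved with the standard limit $\lim_{x\to\infty}(1+1/x)^{x}=e$ to reach the same conclusion. Both steps are short textbook arguments; there is no real obstacle, just the usual care in writing down the derivative and checking the signs. Since the lemma is stated without proof in the paper, I would either cite it as a standard fact or include the two-line argument above.
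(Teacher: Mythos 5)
Your argument is correct: the derivative computation $g'(x)=\log(1+1/x)-1/(x+1)$, the substitution $u=1/x$, the auxiliary function $h$, and the bound via $\log(1+u)<u$ are all accurate and complete. The paper states this lemma without proof as a standard fact, so your two-line verification is exactly the kind of routine argument the author intended.
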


\begin{lemma}
\label{lemma42}For $x>0$ we have%
\[
\frac{1}{1-e^{-x}}-\frac{1}{x}\leq1.
\]
\end{lemma}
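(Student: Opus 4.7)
The plan is to reduce the claimed inequality to the classical exponential estimate $e^x \geq 1+x$. Since $x>0$ ensures $1-e^{-x}>0$, I can clear denominators by multiplying both sides of
\[
\frac{1}{1-e^{-x}}-\frac{1}{x}\leq 1
\]
by the positive quantity $x(1-e^{-x})$. This yields, after collecting terms, $x - (1-e^{-x}) \leq x(1-e^{-x})$, which rearranges to $(1+x)e^{-x}\leq 1$, i.e.
\[
e^{x}\geq 1+x.
\]
At this point the work is done, since $e^x \geq 1+x$ is a well-known inequality valid for all real $x$, provable in one line by noting that $g(x)=e^x-1-x$ satisfies $g(0)=0$ and $g'(x)=e^x-1$ has the sign of $x$, so $g$ attains its global minimum $0$ at $x=0$; equivalently, $e^x$ is convex and $1+x$ is its tangent line at the origin.

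There is no genuine obstacle here: the proof is a two-step calculation — one algebraic manipulation to strip off the denominators, and one citation of a textbook inequality. The only minor point worth recording for cleanliness is that the multiplication by $x(1-e^{-x})$ preserves the direction of the inequality precisely because this factor is positive on $(0,\infty)$, so the equivalence with $e^x\geq 1+x$ is genuine rather than one-sided. Once that is noted, the lemma follows immediately.
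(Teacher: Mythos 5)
Your proof is correct: the algebra after clearing the positive denominator $x(1-e^{-x})$ does reduce the claim to $(1+x)e^{-x}\leq 1$, i.e. $e^{x}\geq 1+x$, which holds for all real $x$. The paper states this lemma without proof, calling it a standard analysis fact, and your argument is exactly the kind of routine verification the author had in mind, so there is nothing to compare beyond noting that you have filled in the omitted details correctly.
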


\begin{lemma}
\label{lemma43}Let $\alpha>0.$ The function%
\[
f\left( x\right) =\frac{x}{1-e^{-2\alpha x}}%
\]
is convex for $x\geq0$. Here $f\left( 0\right) =\lim_{x\rightarrow0^{+}%
}f\left( x\right) =\frac{1}{2\alpha}$.
\end{lemma}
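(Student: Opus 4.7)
The plan is to eliminate the parameter $\alpha$ by a change of variables and reduce to a single universal convexity statement. Setting $y = 2\alpha x$, we have $f(x) = \frac{1}{2\alpha} g(y)$ where $g(y) = y/(1 - e^{-y})$; since positive scaling and affine change of variable preserve convexity, it suffices to show that $g$ is convex on $[0, \infty)$. The asserted boundary value comes for free: the Taylor expansion $1 - e^{-y} = y - y^{2}/2 + O(y^{3})$ yields $\lim_{y \to 0^{+}} g(y) = 1$, hence $f(0) = 1/(2\alpha)$.

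Next, I would compute $g''(y)$ directly from $g = y(1-e^{-y})^{-1}$. A routine differentiation (quotient rule applied twice, followed by factoring $e^{-y}$ out of the numerator) leads to
\[
g''(y) = \frac{e^{-y}\bigl[\,y(1 + e^{-y}) - 2(1 - e^{-y})\bigr]}{(1 - e^{-y})^{3}}.
\]
For $y > 0$ the denominator is positive and so is the prefactor $e^{-y}$, so positivity of $g''$ reduces to nonnegativity of the bracketed expression.

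Dividing the bracket by $1 + e^{-y} > 0$ transforms the required inequality into $y/2 \geq (1 - e^{-y})/(1 + e^{-y}) = \tanh(y/2)$, which is the textbook inequality $u \geq \tanh u$ for $u \geq 0$ (immediate from $\tanh 0 = 0$ together with $(\tanh)'(u) = 1 - \tanh^{2} u \leq 1$). Hence $g'' \geq 0$ on $(0, \infty)$. Convexity at the endpoint $y = 0$ is handled by extending $g$ through its removable singularity: from $g(y) = 1 + y/2 + y^{2}/12 + O(y^{4})$ we read off $g''(0) = 1/6 > 0$, so $g$ is smooth in a neighbourhood of $[0, \infty)$ with $g'' \geq 0$ throughout, which proves convexity on the closed half-line and hence the Lemma. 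I do not foresee any serious obstacle; the whole argument is one explicit derivative identity together with the elementary inequality $\tanh u \leq u$.
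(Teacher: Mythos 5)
Your proof is correct. The paper states Lemma \ref{lemma43} without proof (it is one of five lemmas the author dismisses as derivable ``by some standard analysis arguments''), so there is no argument to compare against; your route --- rescaling $y=2\alpha x$ to the universal function $g(y)=y/(1-e^{-y})$, computing $g''(y)=e^{-y}\bigl[y(1+e^{-y})-2(1-e^{-y})\bigr]/(1-e^{-y})^{3}$, and reducing its positivity to $\tanh(y/2)\le y/2$ --- is exactly such a standard argument, and the derivative identity, the reduction, and the boundary values $g(0)=1$, $g''(0)=1/6$ all check out.
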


\begin{lemma}
\label{lemma44}Let $\alpha>0.$ Then, for $x\in\left[ 0,1+\frac{1}{2\alpha
}\right] $, we have%
\[
\frac{x}{1-e^{-2\alpha x}}\leq\left( \frac{1}{1-e^{-2\alpha-1}}-\frac
{1}{1+2\alpha}\right) x+\frac{1}{2\alpha}.
\]
\end{lemma}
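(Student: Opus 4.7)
The plan is to recognize the right-hand side as the secant line of $f(x)=x/(1-e^{-2\alpha x})$ on the interval $[0,1+\frac{1}{2\alpha}]$ and then appeal to the convexity of $f$ established in Lemma \ref{lemma43}.

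First I would evaluate both sides at the two endpoints of the interval. At $x=0$, the left-hand side equals $f(0)=\frac{1}{2\alpha}$ by Lemma \ref{lemma43}, which is also the value of the linear expression on the right. At the right endpoint $x_{\ast}=1+\frac{1}{2\alpha}=\frac{2\alpha+1}{2\alpha}$, a direct computation gives
\[
f(x_{\ast})=\frac{(2\alpha+1)/(2\alpha)}{1-e^{-(2\alpha+1)}}=\frac{2\alpha+1}{2\alpha\left(1-e^{-2\alpha-1}\right)},
\]
and the right-hand side at $x=x_{\ast}$ simplifies, after cancelling the $-\frac{1}{2\alpha}$ term with the additive $+\frac{1}{2\alpha}$, to exactly the same quantity. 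Hence the linear function on the right is precisely the secant line of $f$ through the points $(0,f(0))$ and $(x_{\ast},f(x_{\ast}))$.

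Since Lemma \ref{lemma43} asserts that $f$ is convex on $[0,\infty)$, the graph of $f$ lies below its secant line on the interval $[0,x_{\ast}]$, which is exactly the inequality to be proved. The only (completely routine) obstacle is the algebraic simplification at the right endpoint; everything else is a one-line invocation of convexity.
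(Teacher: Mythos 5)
Your proof is correct: the right-hand side is indeed the secant line of $f(x)=x/(1-e^{-2\alpha x})$ through $(0,\frac{1}{2\alpha})$ and $(1+\frac{1}{2\alpha},\,\frac{2\alpha+1}{2\alpha(1-e^{-2\alpha-1})})$, and the inequality follows from the convexity established in Lemma \ref{lemma43}. The paper states this lemma without proof as a "standard analysis argument," and your secant-line/convexity argument is exactly the intended one (which is why Lemma \ref{lemma43} is placed immediately before it).
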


\begin{lemma}
\label{lemma45}For $x\geq0$ denote by $f\left( x\right) =x\left(
x+1\right) /\left( x^{2}+1\right) $. Then, for $x\geq0$, we have%
\[
f\left( x\right) \leq f\left( 1+\sqrt{2}\right) =\frac{1+\sqrt{2}}{2}.
\]
\end{lemma}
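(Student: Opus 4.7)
The statement is an elementary single-variable optimization on the half-line, so my plan is straightforward calculus: locate the unique critical point of $f$ in $[0,\infty)$, show it is a global maximum there, and verify the claimed value.

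First I would differentiate using the quotient rule. Writing $f(x)=(x^2+x)/(x^2+1)$, a short computation gives
\[
f'(x)=\frac{(2x+1)(x^2+1)-(x^2+x)(2x)}{(x^2+1)^2}=\frac{-x^2+2x+1}{(x^2+1)^2}.
\]
Setting the numerator equal to zero yields $x^2-2x-1=0$, whose roots are $1\pm\sqrt{2}$. Only $x^{\ast}=1+\sqrt{2}$ lies in $[0,\infty)$. Since $(x^2+1)^2>0$ and the numerator $-x^2+2x+1$ is a downward parabola that is positive on $[0,1+\sqrt{2})$ and negative on $(1+\sqrt{2},\infty)$, the function $f$ is strictly increasing on $[0,1+\sqrt{2}]$ and strictly decreasing on $[1+\sqrt{2},\infty)$. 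Therefore $x^{\ast}$ is the unique global maximum of $f$ on $[0,\infty)$.

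It remains to evaluate $f$ at $x^{\ast}=1+\sqrt{2}$. With $(x^{\ast})^2=3+2\sqrt{2}$, one gets $(x^{\ast})^2+1=4+2\sqrt{2}=2(2+\sqrt{2})$ and $(x^{\ast})^2+x^{\ast}=4+3\sqrt{2}$. Rationalising by multiplying numerator and denominator by $2-\sqrt{2}$ gives
\[
f(1+\sqrt{2})=\frac{4+3\sqrt{2}}{2(2+\sqrt{2})}=\frac{(4+3\sqrt{2})(2-\sqrt{2})}{2\cdot 2}=\frac{2+2\sqrt{2}}{4}=\frac{1+\sqrt{2}}{2},
\]
which establishes the asserted identity and the inequality $f(x)\leq (1+\sqrt{2})/2$ for all $x\geq 0$.

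There is no real obstacle here: the only mildly non-routine part is the rationalisation step in evaluating $f(x^{\ast})$, but that is purely mechanical. The proof is self-contained and does not need any of the earlier lemmas in the paper.
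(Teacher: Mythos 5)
Your proof is correct: the derivative computation, the sign analysis of the numerator $-x^2+2x+1$, and the evaluation $f(1+\sqrt{2})=(1+\sqrt{2})/2$ all check out. The paper states this lemma without proof (as one of five lemmas "derived by some standard analysis arguments"), and your single-variable calculus argument is precisely the standard argument being alluded to.
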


\noindent Our first substantial result is now the following

\begin{lemma}
\label{lemma46}Let $\alpha\geq1.$ Then
\[
\frac{C\left( \alpha\right) }{1+2\alpha}\left( 1-\frac{1}{\sqrt{\alpha}%
}\right) \leq H_{1}\left( \alpha,\alpha\right) .
\]
\end{lemma}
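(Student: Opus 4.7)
The plan is to rescale via the substitution $t = \alpha s$. Using identity (\ref{Properties1}e), the statement reduces to
\[
\int_0^\infty \frac{s^\alpha}{\sinh(\alpha s)(1+s^2)}\,ds \;\geq\; \frac{\alpha-\sqrt{\alpha}}{1+2\alpha}\int_0^\infty \frac{s^\alpha}{\sinh(\alpha s)}\,ds.
\]
The weight $s^\alpha/\sinh(\alpha s)=2s^\alpha e^{-\alpha s}/(1-e^{-2\alpha s})$ is sharply concentrated near $s=1$, where $1/(1+s^2)=1/2$; since $(\alpha-\sqrt{\alpha})/(1+2\alpha)$ is slightly below $1/2$ for large $\alpha$, this concentration is the source of the $1-1/\sqrt{\alpha}$ factor and the inequality should be asymptotically comfortable.

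First I would introduce the auxiliary function $g(s):=s/(1-e^{-2\alpha s})$, so that both integrands take the form $2 s^{\alpha-1}e^{-\alpha s}\,g(s)$ multiplied by the extra factor $1/(1+s^2)$ on the left. Lemma \ref{lemma44} then furnishes an explicit linear upper bound $g(s)\leq (A-B)s+1/(2\alpha)$ on $[0,1+1/(2\alpha)]$ with $B=1/(1+2\alpha)$ — precisely the coefficient appearing in the target bound. Combined with the closed-form evaluations (\ref{Properties2}a)--(\ref{Properties2}c) and the simpler bound coming from Lemma \ref{lemma42} one gets, for example, $C(\alpha)\leq(1+2\alpha)\Gamma(\alpha)$, which serves as the baseline for the right-hand side.

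For the lower bound on $H_1(\alpha,\alpha)$, I would restrict the $s$-integral to an interval around $s=1$ matching the range of Lemma \ref{lemma44}, replace $1/(1+s^2)$ by its minimum value there, and estimate the remaining tail $[1+1/(2\alpha),\infty)$ using the exponential decay of $s^\alpha e^{-\alpha s}$ past its maximum. Lemma \ref{lemma41} (i.e.\ $(1+1/x)^x\leq e$) and the Stirling lower bound (\ref{Properties2}d) control the leading order of the resulting $\Gamma$-like quantities, while Lemma \ref{lemma45} handles the rational-function inequality that appears when relating the minimum of $1/(1+s^2)$ to the target coefficient $1/(1+2\alpha)$.

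The main obstacle will be producing the correction factor $1-1/\sqrt{\alpha}$ with explicit constants rather than merely as an asymptotic statement. The scale $\sqrt{\alpha}$ is the natural Gaussian width of the peak of $s^\alpha e^{-\alpha s}$, so the correction comes from balancing two errors of the same order: the mass of $s^\alpha/\sinh(\alpha s)$ lost outside a window of width $O(1/\sqrt{\alpha})$ around $s=1$, against the deviation of $1/(1+s^2)$ from $1/2$ inside that window. Keeping track of both errors simultaneously, using only the finite-$\alpha$ inequalities in Lemmas \ref{lemma41}--\ref{lemma45} and the integral identities (\ref{Properties2}), is the delicate step — a direct appeal to Watson's lemma is what the author seems to want to avoid here.
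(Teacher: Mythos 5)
Your reduction via (\ref{Properties1}e) to the inequality
\[
\int_{0}^{\infty}\frac{s^{\alpha}}{\sinh\left(\alpha s\right)}\frac{ds}{1+s^{2}}\;\geq\;\frac{\alpha-\sqrt{\alpha}}{1+2\alpha}\int_{0}^{\infty}\frac{s^{\alpha}}{\sinh\left(\alpha s\right)}ds
\]
is exactly the paper's starting point, and the heuristic (concentration at $s=1$, where $\frac{1}{1+s^{2}}=\frac12$) is right. But the mechanism you propose for extracting the factor $1-\frac{1}{\sqrt{\alpha}}$ --- restrict to a window about $s=1$, replace $\frac{1}{1+s^{2}}$ by its minimum there, and control the tail by the decay of $s^{\alpha}e^{-\alpha s}$ --- does not close. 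The peak of $s^{\alpha}e^{-\alpha s}$ has width of order $\frac{1}{\sqrt{\alpha}}$, so the tail beyond $1+\frac{1}{2\alpha}$ (the range of Lemma \ref{lemma44}) still carries essentially \emph{half} of the total mass; discarding it, or lower-bounding $\frac{1}{1+s^{2}}$ by $0$ there, costs a constant factor, not $O\left(\alpha^{-1/2}\right)$. If instead you widen the window to $\left[1-\delta,1+\delta\right]$ so that the excluded mass is $O\left(\alpha^{-1/2}\right)$ times the total, you are forced to take $\delta$ of order $\sqrt{\log\alpha/\alpha}$, and then the minimum of $\frac{1}{1+s^{2}}$ on the window is $\frac12-\Omega\left(\sqrt{\log\alpha/\alpha}\right)$, which already overshoots the permitted deficit $\frac{1}{2\sqrt{\alpha}}$. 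So the two errors you propose to balance are \emph{not} of the same order: any ``minimum over a window'' argument loses at least a factor $\sqrt{\log\alpha}$ against the stated bound. (Lemma \ref{lemma44} is in any case not relevant here; the paper uses it only for the upper bound in Lemma \ref{lemma48}.)

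The idea you are missing is to exploit the \emph{sign} and the \emph{vanishing at the peak} of the deviation rather than its worst case. The paper writes
\[
\frac{\alpha}{1+2\alpha}-\frac{1}{1+s^{2}}=\frac{\alpha}{1+2\alpha}\,\frac{s^{2}-1}{s^{2}+1}-\frac{1}{\left(1+2\alpha\right)\left(1+s^{2}\right)},
\]
drops the last (favorable) term and the part of the first term over $\left[0,1\right]$ where it is negative, and is left with $\int_{1}^{\infty}\frac{s^{\alpha}}{\sinh\alpha s}\frac{s^{2}-1}{s^{2}+1}\,ds$. Since $\frac{s^{2}-1}{s^{2}+1}=\left(s-1\right)\cdot\frac{s+1}{s^{2}+1}$ vanishes linearly at the peak, Lemma \ref{lemma45} reduces this to a constant times the partial first moment $\int_{1}^{\infty}s^{\alpha-1}e^{-\alpha s}\left(s-1\right)ds$, which by (\ref{Properties2}a) equals \emph{exactly} $e^{-\alpha}/\alpha$; comparing with the total mass via (\ref{Properties2}c) and the Stirling bound (\ref{Properties2}d) then yields a ratio below $\frac{1}{\sqrt{\alpha}}\cdot\frac{\alpha}{1+2\alpha}$ with room to spare. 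It is this exact evaluation of the partial first moment --- not a window truncation --- that produces the clean $\frac{1}{\sqrt{\alpha}}$ correction, and without it (or some equivalent way of capturing the cancellation of $s-1$ across the peak) your outline cannot reach the claimed constant.
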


\begin{proof}
By some routine arguments and using Lemma (\ref{lemma45}), (\ref{Properties2}%
a), (\ref{Properties2}d) and (\ref{Properties2}c), we estimate
\begin{align*}
& \frac{1+2\alpha}{\alpha}\int_{0}^{\infty}\frac{x^{\alpha}}{\sinh\alpha
x}\left( \frac{\alpha}{1+2\alpha}-\frac{1}{1+x^{2}}\right) dx\\
& = \int_{0}^{\infty}\frac{x^{\alpha}}{\sinh\alpha x}\frac{x^{2}-1}{x^{2}%
+1}dx-\frac{1}{\alpha}\int_{0}^{\infty}\frac{x^{\alpha}}{\sinh\alpha x}%
\frac{1}{x^{2}+1}dx\\
& \leq \int_{1}^{\infty}\frac{x^{\alpha}}{\sinh\alpha x}\frac{x^{2}-1}%
{x^{2}+1}dx\\
&= 2\int_{1}^{\infty}x^{\alpha-1}e^{-\alpha x}\frac{1}{1-e^{-2\alpha x}}%
\frac{x\left( x-1\right) \left( x+1\right) }{x^{2}+1}dx\\
&\leq \frac{1+\sqrt{2}}{1-e^{-2}}\int_{1}^{\infty}x^{\alpha-1}e^{-\alpha
x}\left( x-1\right) dx\\
&\leq \frac{2}{\sqrt{\alpha}}\frac{1}{\alpha^{\alpha}}\sqrt{\frac{2\pi
}{\alpha}}\left( \frac{\alpha}{e}\right) ^{\alpha}\\
&< \frac{2}{\sqrt{\alpha}}\frac{1}{\alpha^{\alpha}}\Gamma\left(
\alpha\right) \\
&= \frac{2}{\sqrt{\alpha}}\int_{0}^{\infty}x^{\alpha}e^{-\alpha x}dx\\
&\leq \frac{1}{\sqrt{\alpha}}\int_{0}^{\infty}\frac{x^{\alpha}}{\sinh\alpha
x}dx.
\end{align*}
We summarize
\begin{align}
& \int_{0}^{\infty}\frac{x^{\alpha}}{\sinh\alpha x}\left( \frac{1}{1+x^{2}%
}-\frac{\alpha}{1+2\alpha}\right) dx\nonumber\\
& \geq-\frac{1}{\sqrt{\alpha}}\frac{\alpha}{1+2\alpha}\int_{0}^{\infty}%
\frac{x^{\alpha}}{\sinh\alpha x}dx.\label{Asy1}%
\end{align}
Now, using (\ref{Properties1}a), (\ref{Properties1}e) together with
(\ref{Asy1}), we obtain the final result
\begin{align*}
H_{1}\left( \alpha,\alpha\right)   & =\alpha^{\alpha}F\left( \alpha
,\alpha\right) \\
& =\frac{C\left( \alpha\right) }{1+2\alpha}+\alpha^{\alpha}\int_{0}^{\infty
}\frac{t^{\alpha}}{\sinh\alpha t}\left( \frac{1}{1+t^{2}}-\frac{\alpha
}{1+2\alpha}\right) dt\\
& \geq\frac{C\left( \alpha\right) }{1+2\alpha}-\frac{\alpha^{\alpha}}%
{\sqrt{\alpha}}\frac{\alpha}{1+2\alpha}\int_{0}^{\infty}\frac{t^{\alpha}%
}{\sinh\alpha t}dt\\
& =\frac{C\left( \alpha\right) }{1+2\alpha}\left( 1-\frac{1}{\sqrt{\alpha}%
}\right) .
\end{align*}
\end{proof}

\noindent Next, we show
\begin{lemma}
\label{lemma47}Let $\alpha>1.$ Then
\[
\left\Vert H_{1}\left( \alpha,\cdot\right) \right\Vert _{L_{\infty}\left[
0,\infty\right) }\leq\frac{1}{2}C\left( \alpha-1\right) .
\]
\end{lemma}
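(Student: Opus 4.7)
The plan is to reduce Lemma \ref{lemma47} to the elementary arithmetic–geometric mean inequality $x^{2}+t^{2}\geq 2xt$ for $x,t>0$, which immediately gives
\[
\frac{x}{x^{2}+t^{2}}\leq \frac{1}{2t}.
\]
Since $\alpha>1$, the weight $t^{\alpha}/\sinh(t)$ decays exponentially at infinity and the factor $t^{\alpha-1}/\sinh(t)$ remains integrable near $t=0$, so $C(\alpha-1)$ is finite and Fubini-type issues do not arise.

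Concretely, I would fix an arbitrary $x>0$, insert the pointwise bound inside the integral defining $H_{1}(\alpha,x)$, and compute:
\[
H_{1}(\alpha,x)=\int_{0}^{\infty}\frac{t^{\alpha}}{\sinh(t)}\,\frac{x}{x^{2}+t^{2}}\,dt\leq \int_{0}^{\infty}\frac{t^{\alpha}}{\sinh(t)}\cdot\frac{1}{2t}\,dt=\frac{1}{2}\int_{0}^{\infty}\frac{t^{\alpha-1}}{\sinh(t)}\,dt=\frac{1}{2}C(\alpha-1).
\]
Since the right-hand side is independent of $x$, taking the supremum over $x\in[0,\infty)$ yields the claim. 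The endpoint $x=0$ is handled by (\ref{limitproperty}), which gives $H_{1}(\alpha,0)=0$ for $\alpha>1$, so it does not affect the supremum.

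There is essentially no obstacle here: the only thing to verify is that the AM–GM bound is sharp enough to leave a convergent integrand, i.e.\ that $C(\alpha-1)<\infty$, which is precisely why the hypothesis $\alpha>1$ appears (it ensures integrability of $t^{\alpha-1}/\sinh(t)$ at $0$; integrability at $\infty$ holds for all real $\alpha$). No further analytic machinery — and in particular no case distinction on $x$ as in the proof of Lemma \ref{lemma46} — is required.
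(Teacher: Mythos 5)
Your proposal is correct and is essentially identical to the paper's own proof: the paper also bounds $\frac{x}{x^{2}+t^{2}}\leq\frac{x}{2xt}=\frac{1}{2t}$ via $x^{2}+t^{2}\geq 2xt$ and integrates to get $\frac{1}{2}C(\alpha-1)$, restricting to $x>0$ by (\ref{limitproperty}). No differences worth noting.
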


\begin{proof}From (\ref{limitproperty}) it follows that we can restrict ourselves to
values $H_{1}\left( \alpha,x\right) $ for $x>0.$ Thus
\begin{align*}
\left\Vert H_{1}\left( \alpha,\cdot\right) \right\Vert _{L_{\infty}\left[
0,\infty\right) } & = \left\Vert \int_{0}^{\infty}\frac{t^{\alpha}}{\sinh t}
\frac{x}{x^{2}+t^{2} }dt \right\Vert _{L_{\infty}\left( 0,\infty\right) }\\
& \leq\left\Vert \int_{0}^{\infty}\frac{t^{\alpha}}{\sinh t}\frac{x}%
{2xt}dt\right\Vert _{L_{\infty}\left( 0,\infty\right) }\\
& =\frac{1}{2}\int_{0}^{\infty}\frac{t^{\alpha-1}}{\sinh t}dt \\
& =\frac{1}{2}C\left( \alpha-1\right) .
\end{align*}
\end{proof}

\begin{lemma}
\label{lemma48}Let $\alpha\geq2.$ Then
\[
\frac{1}{2}C\left( \alpha-1\right) \leq\frac{C\left( \alpha\right)
}{1+2\alpha}\left( 1+\frac{2}{\sqrt{\alpha}}\right) .
\]
\end{lemma}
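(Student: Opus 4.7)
My plan is to bound $C(\alpha-1)$ from above and $C(\alpha)$ from below using the Gamma-function integrals already collected in (\ref{Properties2}), and then reduce the claim to an elementary algebraic inequality. The identities (\ref{Properties1}e) and (\ref{Properties1}f) re-express both constants as integrals against $1/\sinh(\alpha t)$, and the first step is to expand $1/\sinh(\alpha t) = 2 e^{-\alpha t}/(1-e^{-2\alpha t})$ so that Lemma \ref{lemma42} and the Gamma integrals can be brought to bear.

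For the upper bound on $C(\alpha-1)$ I would apply Lemma \ref{lemma42} at $x = 2\alpha t$, giving $1/(1-e^{-2\alpha t}) \leq 1 + 1/(2\alpha t)$. Plugging this into (\ref{Properties1}f) and evaluating the two resulting integrals via (\ref{Properties2}b) and (\ref{Properties2}c) yields the closed-form bound $C(\alpha-1) \leq 2\Gamma(\alpha) + \Gamma(\alpha-1) = \Gamma(\alpha)\,(2 + 1/(\alpha-1))$. For the lower bound on $C(\alpha)$, the trivial estimate $1/(1-e^{-2\alpha t}) \geq 1$ inserted into (\ref{Properties1}e), combined with (\ref{Properties2}c), immediately gives $C(\alpha) \geq 2\alpha\,\Gamma(\alpha)$.

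Dividing these two estimates, the claim becomes equivalent to
\[
\frac{2\alpha - 1}{2\alpha(\alpha-1)} \leq \frac{2}{1+2\alpha}\left(1 + \frac{2}{\sqrt{\alpha}}\right),
\]
and cross-multiplying and cancelling a common $4\alpha^{2}$ reduces this further to $4\alpha - 1 \leq 8\sqrt{\alpha}\,(\alpha-1)$. At $\alpha = 2$ this reads $7 \leq 8\sqrt{2}$, and a short derivative check shows that the right-hand side grows like $\alpha^{3/2}$ against the left-hand side's linear growth, so the inequality persists for all $\alpha \geq 2$.

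I do not anticipate a serious obstacle. The only non-mechanical choice is which upper bound for $1/(1-e^{-2\alpha t})$ to use, and Lemma \ref{lemma42} is tailored precisely to produce Gamma integrals that match the form of (\ref{Properties2}). The slack provided by the factor $1 + 2/\sqrt{\alpha}$ is generous enough that the sharper Lemmas \ref{lemma43}--\ref{lemma44} need not be invoked for this particular comparison.
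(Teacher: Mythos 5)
Your proof is correct, and it takes a genuinely different and noticeably leaner route than the paper. The paper works with the single integral $\int_{0}^{\infty}\frac{x^{\alpha-1}}{\sinh \alpha x}\left(1+\frac{1}{2\alpha}-x\right)dx$ and controls it through the convexity-based linear majorant of $x/(1-e^{-2\alpha x})$ on $\left[0,1+\frac{1}{2\alpha}\right]$ (Lemma \ref{lemma44}), the exact identity (\ref{Properties2}a) for $\int x^{\alpha-1}e^{-\alpha x}(1-x)\,dx$, the bound $\left(1+\frac{1}{2\alpha}\right)^{2\alpha}\leq e$ from Lemma \ref{lemma41}, and finally the Stirling-type inequality (\ref{Properties2}d); only then does it divide through and invoke (\ref{Properties1}e,f). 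You instead decouple the two sides: the elementary bound $\frac{1}{1-e^{-2\alpha t}}\leq 1+\frac{1}{2\alpha t}$ from Lemma \ref{lemma42} turns (\ref{Properties1}f) directly into $C(\alpha-1)\leq 2\Gamma(\alpha)+\Gamma(\alpha-1)=\Gamma(\alpha)\left(2+\frac{1}{\alpha-1}\right)$ via (\ref{Properties2}b,c), while the trivial minorant $\frac{1}{1-e^{-2\alpha t}}\geq 1$ gives $C(\alpha)\geq 2\alpha\Gamma(\alpha)$ (a bound the paper also uses, in essence, in its final step). The quotient then reduces to $4\alpha-1\leq 8\sqrt{\alpha}\,(\alpha-1)$, which I checked holds for $\alpha\geq 2$ ($7\leq 8\sqrt{2}$ at the endpoint, and the difference is increasing there). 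Your version dispenses with Lemmas \ref{lemma41}, \ref{lemma44} and with (\ref{Properties2}a), (\ref{Properties2}d) altogether, at no cost in the range of validity; the only thing the paper's more elaborate bookkeeping buys is that its intermediate inequality is phrased as a direct comparison of the two $\sinh$-integrals, which is not needed for the stated lemma.
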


\begin{proof}
By using Lemma \ref{lemma44} and Lemma \ref{lemma42}, we begin with
\begin{align*}
& \int_{0}^{\infty}\frac{x^{\alpha-1}}{\sinh\alpha x}\left( 1+\frac
{1}{2\alpha}-x\right) dx\\
& \leq2\int_{0}^{1+\frac{1}{2\alpha}}x^{\alpha-2}e^{-\alpha x}\frac
{x}{1-e^{-2\alpha x}}\left( 1+\frac{1}{2\alpha}-x\right) dx\\
& \leq2\int_{0}^{1+\frac{1}{2\alpha}}x^{\alpha-2}e^{-\alpha x}\left( \left(
\frac{1}{1-e^{-2\alpha-1}}-\frac{1}{1+2\alpha}\right) x+\frac{1}{2\alpha
}\right) \left( 1+\frac{1}{2\alpha}-x\right) dx\\
& \leq2\int_{0}^{1+\frac{1}{2\alpha}}x^{\alpha-2}e^{-\alpha x}\left(
x+\frac{1}{2\alpha}\right) \left( 1+\frac{1}{2\alpha}-x\right) dx\\
& =2\int_{0}^{1+\frac{1}{2\alpha}}x^{\alpha-2}e^{-\alpha x}\left(
x-x^{2}+\frac{1}{2\alpha}+\frac{1}{4\alpha^{2}}\right) dx.
\end{align*}
Note, that for $\alpha\geq\frac{1}{2}$ we have $1/\alpha\geq1/\left(
2\alpha\right) +1/\left( 4\alpha^{2}\right) .$ From this, by using
(\ref{Properties2}a), it follows that
\begin{align*}
& 2\int_{0}^{1+\frac{1}{2\alpha}}x^{\alpha-2}e^{-\alpha x}\left(
x-x^{2}+\frac{1}{2\alpha}+\frac{1}{4\alpha^{2}}\right) dx\\
& \leq2\int_{0}^{1+\frac{1}{2\alpha}}x^{\alpha-1}e^{-\alpha x}\left(
1-x\right) dx+\frac{2}{\alpha}\int_{0}^{1+\frac{1}{2\alpha}}x^{\alpha
-2}e^{-\alpha x}dx\\
& =\frac{2}{\alpha}\sqrt{\left( 1+\frac{1}{2\alpha}\right) ^{2\alpha}%
}e^{-\alpha}e^{-\frac{1}{2}}+\frac{2}{\alpha}\int_{0}^{1+\frac{1}{2\alpha}%
}x^{\alpha-2}e^{-\alpha x}dx.
\end{align*}
Then, using Lemma \ref{lemma41} and (\ref{Properties2}b), we can further
estimate to
\begin{align*}
& \frac{2}{\alpha}\sqrt{\left( 1+\frac{1}{2\alpha}\right) ^{2\alpha}%
}e^{-\alpha}e^{-\frac{1}{2}}+\frac{2}{\alpha}\int_{0}^{1+\frac{1}{2\alpha}%
}x^{\alpha-2}e^{-\alpha x}dx\\
& \leq\frac{2}{\alpha}e^{\frac{1}{2}}e^{-\alpha}e^{-\frac{1}{2}}+\frac
{2}{\alpha}\int_{0}^{\infty}x^{\alpha-2}e^{-\alpha x}dx\\
& =\frac{2}{\alpha}e^{-\alpha}+\frac{2}{\alpha}\frac{\Gamma\left(
\alpha-1\right) }{\alpha^{\alpha-1}}\\
& =\frac{2}{\alpha}e^{-\alpha}+\frac{2}{\alpha-1}\frac{1}{\alpha^{\alpha}%
}\Gamma\left( \alpha\right) .
\end{align*}
We collect for $\alpha\geq2$ the inequality $2/\alpha\geq1/\left( \alpha-1\right) $.
Now, using (\ref{Properties2}d) and (\ref{Properties2}c), we estimate further
\begin{align*}
& \frac{2}{\alpha}e^{-\alpha}+\frac{2}{\alpha-1}\frac{1}{\alpha^{\alpha}%
}\Gamma\left( \alpha\right) \\
& \leq\frac{1}{\sqrt{\alpha}}\frac{\Gamma\left( \alpha\right) }%
{\alpha^{\alpha}}\left( \frac{2}{\sqrt{2\pi}}+\frac{4}{\sqrt{\alpha}}\right)
\\
& \leq\frac{4}{\sqrt{\alpha}}\frac{\Gamma\left( \alpha\right) }%
{\alpha^{\alpha}}=\frac{4}{\sqrt{\alpha}}\int_{0}^{\infty}x^{\alpha}e^{-\alpha
x}dx\\
& \leq\frac{2}{\sqrt{\alpha}}\int_{0}^{\alpha}\frac{x^{\alpha}}{\sinh\alpha
x}dx.
\end{align*}
Combining all together, we obtain for all $\alpha\geq2$,
\[
\int_{0}^{\alpha}\frac{t^{\alpha-1}}{\sinh\alpha t}dt\leq\frac{2\alpha
}{1+2\alpha}\left( 1+\frac{2}{\sqrt{\alpha}}\right) \int_{0}^{\alpha}
\frac{t^{\alpha}}{\sinh\alpha t}dt.
\]
Finally, using (\ref{Properties1}f) and (\ref{Properties1}e), we arrive at
\begin{align*}
\frac{1}{2}C\left( \alpha-1\right)  & =\frac{\alpha^{\alpha}}{2}\int
_{0}^{\alpha}\frac{t^{\alpha-1}}{\sinh\alpha t}dt\\
& \leq\frac{\alpha^{\alpha}}{2}\frac{2\alpha}{1+2\alpha}\left( 1+\frac
{2}{\sqrt{\alpha}}\right) \int_{0}^{\alpha}\frac{t^{\alpha}}{\sinh\alpha
t}dt\\
& =\left( 1+\frac{2}{\sqrt{\alpha}}\right) \frac{C\left( \alpha\right)
}{1+2\alpha}.
\end{align*}
\end{proof}

\begin{proof}[Proof of Theorem \ref{Theorem41}.] The Theorem is now an easy
consequence of Lemma \ref{lemma46}, Lemma \ref{lemma47} and Lemma \ref{lemma48}.
\end{proof}

\section{Asymptotics of the error function}

In this section we establish an asymptotic bound for the norm of the limiting
error function, i.e. for $\left\Vert H\left( \alpha,\cdot\right) \right\Vert
_{L_{\infty\left[ 0,\infty\right) }}$. This section is the most technical
part in this paper. Here, we use the generalized Watson Lemma (Laplace method
for integrals with large parameter) for deriving an asymptotic expansion used
to be later in the context. As it turns out, we need an higher order
asymptotics up to order $5$ involving the computation of certain rather
complicated defined constants. However, the main idea for deriving a lower
estimate is quite easy to see. Let us start, once again, with a diagram
(Figure \ref{Figure3}) involving the functions $\left\vert H\left(
\alpha,\cdot\right) \right\vert $ and $H_{1}\left( \alpha,\cdot\right) .$

\begin{figure}[th]
\begin{center}
\includegraphics[width=0.45\textwidth]{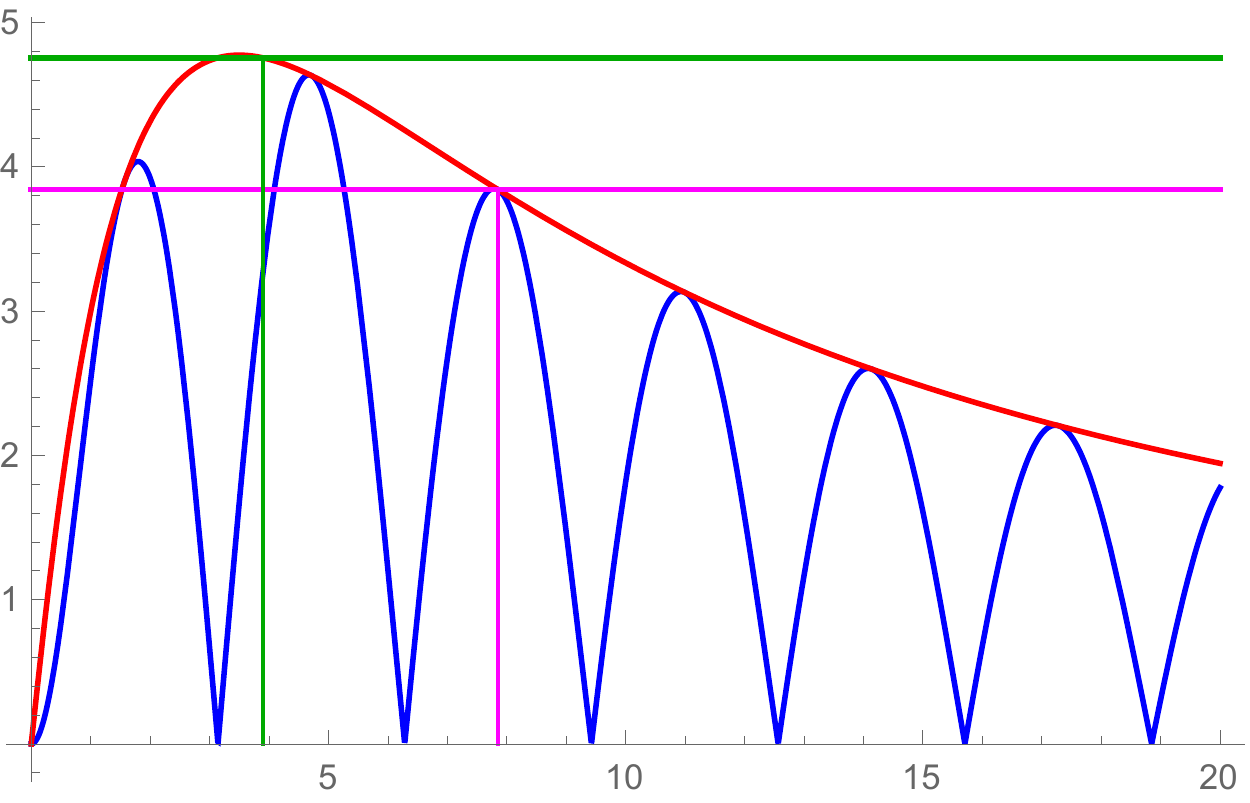} \hfill
\includegraphics[width=0.45\textwidth]{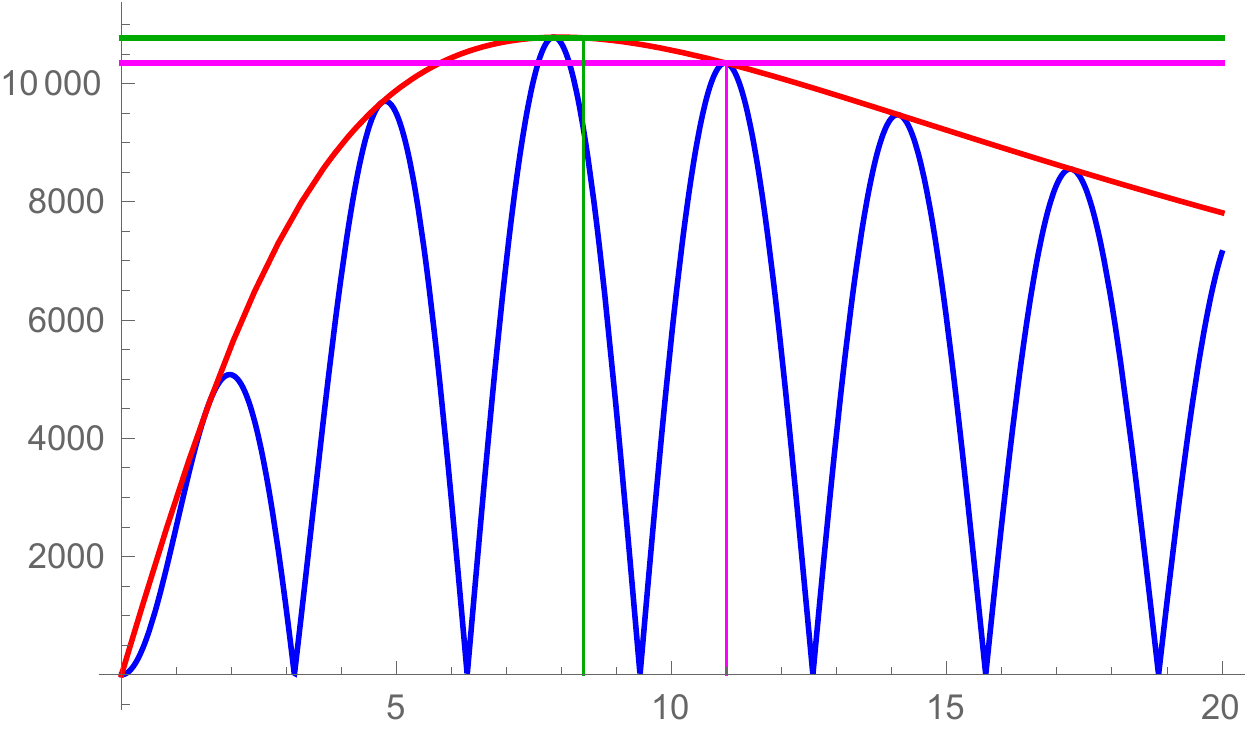}
\end{center}
\caption{The error functions $\left\vert H\left( \alpha,\cdot\right)
\right\vert $ and its envelope $H_{1}\left( \alpha,\cdot\right) $.}%
\label{Figure3}%
\end{figure}

\noindent
Figure \ref{Figure3} shows the functions $\left\vert H\left(
\alpha,\cdot\right) \right\vert $ and its envelope $H_{1}\left( \alpha
,\cdot\right) $ together with the point evaluations $H_{1}\left(
\alpha,\alpha\right) $ and $\left\vert H\left( \alpha,\beta\right)
\right\vert =H_{1}\left( \alpha,\beta\right) $, where $\beta=\beta\left(
\alpha\right) =\pi\left[ \frac{\alpha}{\pi}\right] +\frac{3}{2}\pi$ and
$\alpha=3.9$ and $\alpha=8.4$. Geometrically, the point $\beta$ is the
position of the first or the second relative maximum of $\left\vert H\left(
\alpha,\cdot\right) \right\vert $ on the right-hand side of $\alpha$, where
$H_{1}\left( \alpha,\cdot\right) $ appears to be descending. For growing
values of $\alpha$, the size of these maxima appear to be of the same
magnitude compared to the size $H_{1}\left( \alpha,\alpha\right) $. We use
both observations for the asymptotic analysis. First, we show that
$H_{1}\left( \alpha,\cdot\right) $ is descending at least for values
$x\geq\alpha$. Then, we derive the asymptotics for the local maximum in
$\left\vert H\left( \alpha,\beta\right) \right\vert $. It turns out that the
following integral inequality plays an essential role.

\begin{theorem}
\label{Theorem51}There exists a fixed constant $\alpha_{0}>0$ such that for
$\alpha\geq\alpha_{0}$,
\begin{equation}
R\left( \alpha,\alpha\right) =\int_{0}^{\infty}\frac{t^{\alpha+1}}%
{\sinh\alpha t}\frac{1}{1+t^{2}}dt-\int_{0}^{\infty}\frac{t^{\alpha}}%
{\sinh\alpha t}\frac{1}{1+t^{2}}dt>0. \label{mike01}%
\end{equation}
\end{theorem}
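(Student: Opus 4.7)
The plan is to expand
\[
\frac{1}{\sinh(\alpha t)} = 2\sum_{k=0}^\infty e^{-(2k+1)\alpha t} \quad (t>0),
\]
which turns the integral into
\[
R(\alpha,\alpha) = \int_0^\infty \frac{t^\alpha(t-1)}{\sinh(\alpha t)(1+t^2)}\,dt = 2\sum_{k=0}^\infty I_k(\alpha), \quad I_k(\alpha) := \int_0^\infty \frac{t^\alpha(t-1)\,e^{-(2k+1)\alpha t}}{1+t^2}\,dt,
\]
and then to apply Watson's lemma to the dominant term $I_0$ while bounding the rest as exponentially smaller.

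For the tail $k \geq 1$, the elementary inequality $|(t-1)/(1+t^2)| \leq 1$ (which follows from $(t-1)^2 \leq 1 + t^2$ for $t \geq 0$) together with $\int_0^\infty t^\alpha e^{-(2k+1)\alpha t}\,dt = \Gamma(\alpha+1)/((2k+1)\alpha)^{\alpha+1}$ gives
\[
\sum_{k\geq 1} |I_k(\alpha)| \leq \frac{\Gamma(\alpha+1)}{\alpha^{\alpha+1}}\sum_{k\geq 1}(2k+1)^{-\alpha-1} = O\bigl(3^{-\alpha}\alpha^{-1/2}e^{-\alpha}\bigr)
\]
by Stirling. For the main term $I_0(\alpha) = \int_0^\infty t^\alpha e^{-\alpha t}\phi(t)\,dt$ with $\phi(t) := (t-1)/(1+t^2)$: the factor $t^\alpha e^{-\alpha t}$ is, up to $\Gamma(\alpha+1)/\alpha^{\alpha+1}$, the density of a Gamma$(\alpha+1,\alpha)$ law concentrated at $t=1$, and since $\phi(1) = 0$ the leading Laplace term vanishes. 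I would Taylor expand
\[
\phi(t) = \tfrac{1}{2}(t-1) - \tfrac{1}{2}(t-1)^2 + \tfrac{1}{4}(t-1)^3 + O\bigl((t-1)^4\bigr)
\]
at $t = 1$ and evaluate the resulting moment integrals via $\int_0^\infty t^{\alpha+j} e^{-\alpha t}\,dt = \Gamma(\alpha+1+j)/\alpha^{\alpha+1+j}$. The first two terms contribute $\Gamma(\alpha+1)/(2\alpha^{\alpha+2}) - (\alpha+2)\Gamma(\alpha+1)/(2\alpha^{\alpha+3})$, whose $1/\alpha$ parts cancel exactly, and incorporating the cubic term yields
\[
I_0(\alpha) = \frac{\Gamma(\alpha+1)}{\alpha^{\alpha+1}}\left(\frac{1}{4\alpha^2} + O(\alpha^{-3})\right) \sim \sqrt{\tfrac{\pi}{2}}\,\frac{e^{-\alpha}}{\alpha^{5/2}} > 0.
\]

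The main obstacle is the Laplace/Watson bookkeeping. Because of the cancellation at order $1/\alpha$, the expansion must be carried to order $1/\alpha^{5/2}$ --- this is the author's ``higher-order asymptotics up to order 5'' --- while maintaining uniform remainder estimates valid on $(0,\infty)$; this is the bulk of the technical work. Once the positive leading coefficient $1/(4\alpha^2)$ is isolated and combined with the geometric-tail bound on $\sum_{k\geq 1} I_k$, one concludes $R(\alpha,\alpha) > 0$ for all $\alpha \geq \alpha_0$.
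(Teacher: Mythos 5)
Your strategy is sound and, although it reorganizes the argument, it is recognizably the same computation as the paper's. The paper also reduces everything to Watson's lemma at the critical point $t=1$ of $g(t)=t-\log t$: it expands $1/\sinh$ into the same exponential series, sandwiches $F(\alpha,x)$ between zeta-function multiples of the single integral $G(\alpha,x)=\int_0^\infty t^\alpha e^{-xt}(1+t^2)^{-1}\,dt$ (Lemma \ref{lemma54}), and then expands $G(\alpha+1,\alpha)$ and $G(\alpha,\alpha)$ separately to relative order $\alpha^{-2}$, where the constant and $\alpha^{-1}$ terms cancel and the coefficient $\frac{205-61}{576}=\frac14$ survives. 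Your version keeps only the $k=0$ exponential and bounds the tail $\sum_{k\ge1}I_k$ directly by $O\bigl(3^{-\alpha}\alpha^{-1/2}e^{-\alpha}\bigr)$ --- arguably cleaner than the zeta sandwich --- and builds the cancellation into the amplitude $\phi(t)=(t-1)/(1+t^2)$, which vanishes at the peak. Both routes produce the same leading coefficient $1/(4\alpha^2)$. (A minor slip: $I_0\sim\sqrt{\pi/8}\,e^{-\alpha}\alpha^{-5/2}$; your $\sqrt{\pi/2}$ is the asymptotic for $R=2\sum_k I_k$.)

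One step, however, does not close as written. Against the Gamma$(\alpha+1,\alpha)$ weight the moments $E\bigl[(T-1)^j\bigr]$ are of order $\alpha^{-1},\alpha^{-1},\alpha^{-2},\alpha^{-2},\alpha^{-3},\dots$ for $j=1,\dots,5$; in particular $E\bigl[(T-1)^4\bigr]=3\alpha^{-2}+O(\alpha^{-3})$ is of exactly the same order as the coefficient $\tfrac14\alpha^{-2}$ you are trying to isolate. A Taylor expansion of $\phi$ with remainder $O\bigl((t-1)^4\bigr)$ therefore cannot yield $I_0=\frac{\Gamma(\alpha+1)}{\alpha^{\alpha+1}}\bigl(\frac{1}{4\alpha^2}+O(\alpha^{-3})\bigr)$: the remainder contributes at order $\alpha^{-2}$ and could a priori cancel or overwhelm the $\tfrac14$. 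The fix is one more term: with $u=t-1$ one computes $\phi=\tfrac u2-\tfrac{u^2}2+\tfrac{u^3}4+0\cdot u^4-\tfrac{u^5}8+\cdots$, so the quartic coefficient vanishes and the quintic term contributes only $O(\alpha^{-3})$. With that extra term (and the standard localization of the Laplace integral to $|t-1|\le\delta$, which you already flag as the technical burden), your argument is complete; this need to go one order beyond the naive expansion is exactly why the paper's Lemma \ref{lemma56} carries the Watson expansion to the fifth coefficient.
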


\begin{figure}[th]
\begin{center}
\includegraphics[width=0.6\textwidth]{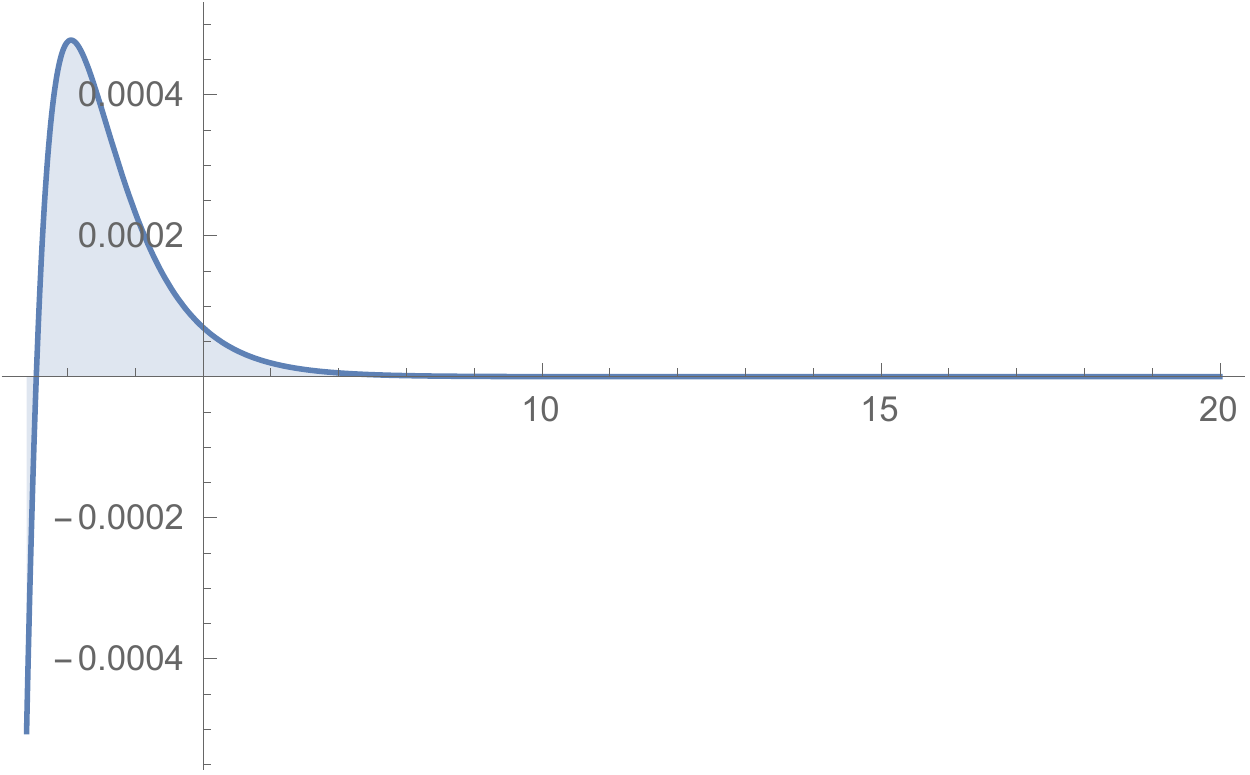}
\end{center}
\caption{Function $R\left( \cdot,\cdot\right) $ for $\alpha\in\left[ 2.4,
20 \right] $}%
\label{Figure4}%
\end{figure}

\noindent
We remark that (\ref{mike01}) is not true for all $\alpha_{0}>0.$
This can be seen out from Figure \ref{Figure4}. Also, for growing values of
$\alpha$, the positive magnitude becomes rather small. Numerical experiments
suggest that the minimal value for $\alpha_{0}$ such that (\ref{mike01})
becomes true, is somewhere in the interval $\left( 2.54288,2.54289\right) $.
However, since we are interested in an asymptotic expansion, the determination
of the exact size of the minimal value $\alpha_{0}$ is not important. From
Theorem \ref{Theorem51} we may derive our first desired property.

\begin{theorem}
\label{Theorem52}There exists a fixed constant $\alpha_{0}>0$ such that
$H_{1}\left( \alpha,\cdot\right) $ is decreasing, whenever $x\geq\alpha
\geq\alpha_{0}.$
\end{theorem}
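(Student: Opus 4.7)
The plan is to prove Theorem \ref{Theorem52} by showing directly that $\partial_x H_1(\alpha,x) \le 0$ whenever $x \ge \alpha > 0$, via a symmetrization trick followed by a short monotonicity inequality for $\log \sinh$. This route will not need the intermediate Theorem \ref{Theorem51}: the threshold $\alpha_0$ in the statement drops out automatically.

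First I will differentiate $H_1$ under the integral sign (standard: on any compact subset of $(0,\infty)$ the $x$-derivative of the integrand is dominated by an integrable function via (\ref{Properties1}c)), obtaining
\[
\partial_x H_1(\alpha,x) = \int_0^\infty \frac{t^\alpha(t^2-x^2)}{\sinh t\,(x^2+t^2)^2}\,dt,
\]
and then substitute $t=xs$ to normalize the rational denominator:
\[
\partial_x H_1(\alpha,x) = x^{\alpha-1}\,I(\alpha,x),\qquad I(\alpha,x) := \int_0^\infty \frac{s^\alpha(s^2-1)}{\sinh(xs)(1+s^2)^2}\,ds.
\]
Next I will exploit the inversion $s \leftrightarrow 1/s$. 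The change of variables $s=1/r$ transforms $I(\alpha,x)$ into $-J(\alpha,x)$, where $J$ is the analogous integral with $s^\alpha/\sinh(xs)$ replaced by $s^{-\alpha}/\sinh(x/s)$. Hence $2I = I - J = \int_0^\infty g(s)\,ds$ with
\[
g(s) := \frac{s^2-1}{(1+s^2)^2}\left[\frac{s^\alpha}{\sinh(xs)}-\frac{s^{-\alpha}}{\sinh(x/s)}\right].
\]
A short bookkeeping calculation (Jacobian $ds=-dr/r^2$, the factor $(1+s^2)^2=(1+r^2)^2/r^4$ and the sign flip of the bracket) yields the symmetry $g(1/s) = s^2 g(s)$, which forces $\int_1^\infty g = \int_0^1 g$ and collapses the representation to
\[
I(\alpha,x) = \int_0^1 \frac{s^2-1}{(1+s^2)^2}\left[\frac{s^\alpha}{\sinh(xs)}-\frac{s^{-\alpha}}{\sinh(x/s)}\right]ds.
\]

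The prefactor $(s^2-1)/(1+s^2)^2$ is strictly negative on $(0,1)$, so it suffices to show the bracket is non-negative there. Writing $u=1/s>1$, this is the single inequality
\[
\frac{\sinh(xu)}{\sinh(x/u)} \ge u^{2\alpha}, \qquad u\ge 1,\ x\ge\alpha>0,
\]
which I would prove by setting $\psi(u):=\log\sinh(xu)-\log\sinh(x/u)-2\alpha\log u$, observing $\psi(1)=0$, and computing
\[
u\,\psi'(u) = xu\coth(xu)+(x/u)\coth(x/u)-2\alpha.
\]
Since $y\coth y > y$ for every $y>0$ (because $\coth y>1$) and $xu+x/u\ge 2x$ by AM--GM, one gets $u\psi'(u)>2x-2\alpha\ge 0$. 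Thus $\psi$ is strictly increasing on $(0,\infty)$, so $\psi\ge 0$ on $[1,\infty)$, the bracket is non-negative, $I(\alpha,x)\le 0$, and $H_1(\alpha,\cdot)$ is decreasing on $[\alpha,\infty)$.

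The main obstacle is the clean verification of the two halves of the symmetrization, namely $I=-J$ under $s\mapsto 1/s$ and the pointwise identity $g(1/s)=s^2 g(s)$; both are a matter of tracking signs and powers carefully, but neither is deep. The remaining ingredients ($\coth>1$ and AM--GM) are elementary, which is precisely what lets the argument dispense with the threshold $\alpha_0$ appearing in the theorem.
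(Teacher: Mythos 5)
Your proposal is correct, and it takes a genuinely different --- and in fact stronger --- route than the paper. The paper never computes $\frac{d}{dx}H_{1}(\alpha,x)$ exactly: it bounds it above by $-\alpha x^{\alpha-1}R(\alpha,x)=-\tfrac{2}{x^{2}+\alpha^{2}}S(\alpha,x)$ using $\coth\geq1$ (Lemma \ref{lemma52}), shows $S(\alpha,\cdot)$ is increasing (Lemma \ref{lemma53}), and reduces everything to the positivity statement $R(\alpha,\alpha)>0$ of Theorem \ref{Theorem51} --- which is precisely where the threshold $\alpha_{0}$ enters, since that inequality genuinely fails for $\alpha\lesssim2.54$ and its proof needs the fifth-order Watson-lemma expansions of Lemma \ref{lemma56}. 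You instead keep the exact derivative
\[
\frac{d}{dx}H_{1}(\alpha,x)=x^{\alpha-1}\int_{0}^{\infty}\frac{s^{\alpha}\left(s^{2}-1\right)}{\sinh\left(xs\right)\left(1+s^{2}\right)^{2}}\,ds,
\]
fold the integral onto $(0,1)$ via $s\mapsto1/s$ (I checked the bookkeeping: $I=-J$ and $g(1/s)=s^{2}g(s)$ both hold), and reduce matters to $\sinh(xu)/\sinh(x/u)\geq u^{2\alpha}$ for $u\geq1$, $x\geq\alpha$, which your computation of $u\psi'(u)$ together with $y\coth y>y$ and AM--GM settles for every $\alpha>0$; since the inequalities are strict for $u>1$, you even get strict decrease. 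There is no conflict with the paper's Figure 4: $R(\alpha,\alpha)<0$ for small $\alpha$ only invalidates the paper's \emph{upper bound} on the derivative, not the conclusion. The payoff of your route is a stronger theorem (no threshold $\alpha_{0}$, valid for all $\alpha>0$) by entirely elementary means, making Theorem \ref{Theorem51} unnecessary for this step --- though Lemma \ref{lemma56} is still needed elsewhere (Lemmas \ref{lemma57}, \ref{lemma59}, \ref{lemma510}) on the way to Theorem \ref{Theorem53}. The only detail to write out is the justification of differentiating under the integral (the integrand's $x$-derivative is dominated near $t=0$ by a constant times $t^{\alpha-1}/\sinh t$ uniformly for $x$ in compact subsets of $(0,\infty)$), which is routine.
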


\noindent
From Theorem \ref{Theorem52} we obtain the final asymptotics.

\begin{theorem}
\label{Theorem53} We have%
\[
\left\Vert H\left( \alpha,\cdot\right) \right\Vert _{L_{\infty}\left[
0,\infty\right) }=\frac{C\left( \alpha\right) }{1+2\alpha}\left(
1+o\left( 1\right) \right) ,\quad\alpha\rightarrow\infty.
\]
\end{theorem}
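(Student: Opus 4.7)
\medskip\noindent
\textbf{Proof proposal.} My plan is to sandwich $\left\Vert H(\alpha,\cdot)\right\Vert _{L_{\infty}[0,\infty)}$ between a clean upper bound that is an immediate consequence of Theorem \ref{Theorem41}, and a lower bound obtained by evaluating $|H(\alpha,\cdot)|$ at a single carefully chosen point $\beta=\beta(\alpha)$. The starting observation is that the sine factor in the defining integral for $H$ slides outside the integral, giving $H(\alpha,x)=\sin(x)\,H_{1}(\alpha,x)$ and hence $|H(\alpha,x)|=|\sin x|\,H_{1}(\alpha,x)\leq H_{1}(\alpha,x)$. Theorem \ref{Theorem41} then yields the upper half
\[
\left\Vert H(\alpha,\cdot)\right\Vert _{L_{\infty}[0,\infty)}\leq\left\Vert H_{1}(\alpha,\cdot)\right\Vert _{L_{\infty}[0,\infty)}\leq\frac{C(\alpha)}{1+2\alpha}\Bigl(1+\tfrac{2}{\sqrt{\alpha}}\Bigr),
\]
which is already of the required shape $\frac{C(\alpha)}{1+2\alpha}(1+o(1))$.

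For the matching lower bound I would follow the geometric picture of Figure \ref{Figure3} and pick $\beta=\pi[\alpha/\pi]+3\pi/2$. Because $\pi[\alpha/\pi]$ is an integer multiple of $\pi$, one has $|\sin\beta|=1$ and hence $|H(\alpha,\beta)|=H_{1}(\alpha,\beta)$, while the floor inequality $\alpha-\pi<\pi[\alpha/\pi]\leq\alpha$ yields $\alpha+\pi/2<\beta\leq\alpha+3\pi/2$; in particular $\beta\geq\alpha$. For $\alpha\geq\alpha_{0}$, the monotonic decrease of $H_{1}(\alpha,\cdot)$ on $[\alpha,\infty)$ supplied by Theorem \ref{Theorem52} (whose core is the delicate integral inequality in Theorem \ref{Theorem51}) then gives
\[
\left\Vert H(\alpha,\cdot)\right\Vert _{L_{\infty}[0,\infty)}\geq H_{1}(\alpha,\beta)\geq H_{1}\bigl(\alpha,\alpha+\tfrac{3\pi}{2}\bigr),
\]
so the whole theorem reduces to establishing the shift estimate $H_{1}(\alpha,\alpha+\tfrac{3\pi}{2})=H_{1}(\alpha,\alpha)\,(1+o(1))$, which combined with the lower half of Theorem \ref{Theorem41} furnishes the matching lower bound $\frac{C(\alpha)}{1+2\alpha}(1+o(1))$.

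The only remaining technical obstacle is that shift estimate. I would handle it via the scaling $t=\alpha s$ to rewrite
\[
H_{1}(\alpha,\alpha(1+\gamma))=\alpha^{\alpha}(1+\gamma)\int_{0}^{\infty}\frac{s^{\alpha}}{\sinh(\alpha s)}\,\frac{ds}{(1+\gamma)^{2}+s^{2}},\qquad\gamma=\frac{3\pi}{2\alpha}\to 0,
\]
and to compare with the $\gamma=0$ integral, which equals $H_{1}(\alpha,\alpha)/\alpha^{\alpha}$. Writing $s^{\alpha}/\sinh(\alpha s)=2e^{\alpha(\log s-s)}/(1-e^{-2\alpha s})$ exhibits a Laplace-type weight concentrating at its unique maximum $s=1$, while the remaining factor $1/((1+\gamma)^{2}+s^{2})$ differs from $1/(1+s^{2})$ by $O(\gamma)=O(1/\alpha)$ uniformly on a fixed neighbourhood of $s=1$. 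A standard Watson/Laplace estimate -- of the same flavour as those already used in Section 4 -- then gives that the ratio of the two integrals is $1+O(1/\alpha)$, and the prefactor $(1+\gamma)$ contributes a further $1+O(1/\alpha)$. The conceptually hard work therefore sits in the preceding Theorems \ref{Theorem51} and \ref{Theorem52}; once those are granted, the present result is a one-line Laplace comparison plus the immediate upper bound.
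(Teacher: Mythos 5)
Your proof is correct and its skeleton coincides with the paper's: the upper bound is read off from Theorem \ref{Theorem41} via $\left\vert H\left( \alpha,x\right) \right\vert =\left\vert \sin x\right\vert H_{1}\left( \alpha,x\right) \leq H_{1}\left( \alpha,x\right) $, and the lower bound comes from evaluating at $\beta=\pi\left[ \alpha/\pi\right] +\frac{3}{2}\pi$ (where $\left\vert \sin\beta\right\vert =1$, the paper's Lemma \ref{lemma58}), pushing down to $\alpha+\frac{3}{2}\pi$ by the monotonicity of Theorem \ref{Theorem52}, and then invoking the shift estimate $H_{1}\left( \alpha,\alpha+\frac{3}{2}\pi\right) =H_{1}\left( \alpha,\alpha\right) \left( 1+o\left( 1\right) \right) $, which is exactly the paper's Lemma \ref{lemma510}.

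Where you genuinely diverge is in how that shift estimate is proved, and your route is simpler. The paper rescales by the evaluation point, writing $H_{1}\left( \alpha,x\right) =x^{\alpha}F\left( \alpha,x\right) $, which produces the large prefactor $\left( 1+\frac{3\pi}{2\alpha}\right) ^{\alpha}\rightarrow e^{3\pi/2}$; cancelling it forces the detour through the sandwich $F_{1}\leq F\leq F_{2}$ of Lemma \ref{lemma54}, the zeta-function estimates, and the Watson-lemma asymptotics $G\left( \alpha,\alpha+c\right) /G\left( \alpha,\alpha\right) =e^{-c}\left( 1+O\left( \alpha^{-1}\right) \right) $ of Lemmas \ref{lemma56} and \ref{lemma59}. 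Your substitution $t=\alpha s$ in both integrals keeps the prefactor at $1+\gamma=1+O\left( \alpha^{-1}\right) $, and then the comparison of kernels needs no Laplace method at all: since $\left( 1+\gamma\right) ^{2}+s^{2}\geq1$, one has
\[
1-\left( 2\gamma+\gamma^{2}\right) \leq\frac{1+s^{2}}{\left( 1+\gamma\right) ^{2}+s^{2}}\leq1
\]
uniformly for all $s\geq0$, so the ratio of the two integrals is $1+O\left( \alpha^{-1}\right) $ by a pointwise bound. You can therefore delete the sentence invoking a ``standard Watson/Laplace estimate'' for this step --- the uniform elementary bound already does the job, and in fact gives the sharper rate $1+O\left( \alpha^{-1}\right) $ rather than merely $1+o\left( 1\right) $. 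The Watson-lemma machinery of Lemma \ref{lemma56} is still needed elsewhere (for Theorem \ref{Theorem51} via Lemma \ref{lemma57}), so it is not eliminated from the paper, but your version of the shift estimate bypasses Lemmas \ref{lemma54} and \ref{lemma59} entirely.
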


\noindent
We first establish Theorem \ref{Theorem52} by assuming that Theorem
\ref{Theorem51} holds true. Then, we present the proof for Theorem
\ref{Theorem51} which is completely independent of the forthcoming Lemmas
related to Theorem \ref{Theorem52}. Finally, we present the proof for Theorem
\ref{Theorem53}. Without proof, we first present the following

\begin{lemma}
\label{lemma51}Let $\alpha>0$ be fixed and $x>0$. Then $S\left(
\alpha,x\right) $ has the representation%
\[
S\left( \alpha,x\right) =\int_{0}^{\infty}\frac{t^{\alpha}\left(
t-\alpha\right) }{2\sinh t}\frac{x^{2}+\alpha^{2}}{x^{2}+t^{2}}dt.
\]
\end{lemma}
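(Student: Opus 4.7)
The plan is a direct computation starting from the definitions of $F$, $R$, and $S$ given in Section 2, followed by a single change of variables.

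First, I would combine the two integrals in the definition $R(\alpha,x)=(x/\alpha)F(\alpha+1,x)-F(\alpha,x)$ into one. Since both $F(\alpha,x)$ and $F(\alpha+1,x)$ are absolutely convergent for $\alpha>0$, $x>0$ (the integrand behaves like $t^{\alpha-1}/x$ near zero and decays exponentially at infinity), I may write
\[
R(\alpha,x)=\frac{1}{\alpha}\int_{0}^{\infty}\frac{t^{\alpha}(xt-\alpha)}{\sinh(xt)}\,\frac{1}{1+t^{2}}\,dt.
\]

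Next I would perform the substitution $u=xt$, $du=x\,dt$, which is the natural way to convert $\sinh(xt)$ into $\sinh u$ and to prepare for matching the target integral. Under this substitution $t=u/x$, $t^{\alpha}=u^{\alpha}/x^{\alpha}$, and $1/(1+t^{2})=x^{2}/(x^{2}+u^{2})$, so that after gathering factors of $x$ one obtains
\[
R(\alpha,x)=\frac{1}{\alpha x^{\alpha-1}}\int_{0}^{\infty}\frac{u^{\alpha}(u-\alpha)}{\sinh u}\,\frac{1}{x^{2}+u^{2}}\,du.
\]

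Finally, I would multiply through by the prefactor $\alpha x^{\alpha-1}(x^{2}+\alpha^{2})/2$ coming from the definition $S(\alpha,x)=(\alpha x^{\alpha-1}/2)(x^{2}+\alpha^{2})R(\alpha,x)$. The factor $\alpha x^{\alpha-1}$ cancels exactly against the prefactor produced by the substitution, and the $(x^{2}+\alpha^{2})/2$ can be pulled inside the integral (it is independent of $u$), yielding
\[
S(\alpha,x)=\int_{0}^{\infty}\frac{u^{\alpha}(u-\alpha)}{2\sinh u}\,\frac{x^{2}+\alpha^{2}}{x^{2}+u^{2}}\,du,
\]
which is the stated identity after renaming $u$ back to $t$. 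There is no serious obstacle here; the only point that deserves a brief justification is the convergence of the integrals before and after the substitution, which follows from $\sinh(xt)\sim xt$ as $t\to 0^{+}$ (so the integrand is of order $t^{\alpha-1}$) and from the exponential decay of $1/\sinh(xt)$ as $t\to\infty$, together valid for all $\alpha>0$, $x>0$.
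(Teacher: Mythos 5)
Your computation is correct and is exactly the routine verification the paper omits (the lemma is stated there without proof): combining the two integrals defining $R(\alpha,x)$ by linearity, substituting $u=xt$, and multiplying by the prefactor in the definition of $S(\alpha,x)$ gives the stated representation, with convergence justified as you indicate. Nothing further is needed.
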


\begin{lemma}
\label{lemma52}Let $\alpha>0$ be fixed and $x>0$. Then
\[
\frac{d}{dx}H_{1}\left( \alpha,x\right) \leq-\frac{2}{x^{2}+\alpha^{2}%
}S\left( \alpha,x\right) .
\]
\end{lemma}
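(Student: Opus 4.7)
The plan is to write both sides of the claimed inequality as integrals against the common measure $t^{\alpha}/\sinh t\,dt$ and show that their difference is a manifestly non-positive integral. First, differentiation under the integral sign gives
\[
\frac{d}{dx}H_{1}(\alpha,x) = \int_{0}^{\infty}\frac{t^{\alpha}(t^{2}-x^{2})}{\sinh t\,(x^{2}+t^{2})^{2}}\,dt,
\]
and Lemma \ref{lemma51} provides
\[
\frac{2S(\alpha,x)}{x^{2}+\alpha^{2}} = \int_{0}^{\infty}\frac{t^{\alpha}(t-\alpha)}{\sinh t\,(x^{2}+t^{2})}\,dt,
\]
so the lemma reduces to showing that the sum of these two integrals is $\leq 0$.

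The pivotal observation is the identity
\[
\frac{t^{2}-x^{2}}{(x^{2}+t^{2})^{2}} = -\frac{d}{dt}\!\left(\frac{t}{x^{2}+t^{2}}\right),
\]
which lets me integrate the first integral by parts in the variable $t$. The boundary term $t^{\alpha+1}/[\sinh t\,(x^{2}+t^{2})]$ behaves like $t^{\alpha}/x^{2}$ at $t=0$ and decays exponentially at infinity for $\alpha>0$, so it vanishes. Using $\frac{d}{dt}(t^{\alpha}/\sinh t) = \alpha t^{\alpha-1}/\sinh t - t^{\alpha}\cosh t/\sinh^{2}t$, the integration by parts yields
\[
\frac{d}{dx}H_{1}(\alpha,x) = \int_{0}^{\infty}\frac{\alpha t^{\alpha}}{\sinh t\,(x^{2}+t^{2})}\,dt - \int_{0}^{\infty}\frac{t^{\alpha+1}\cosh t}{\sinh^{2}t\,(x^{2}+t^{2})}\,dt.
\]

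Adding $2S(\alpha,x)/(x^{2}+\alpha^{2})$, which splits as $\int_{0}^{\infty} t^{\alpha+1}/[\sinh t\,(x^{2}+t^{2})]\,dt - \int_{0}^{\infty} \alpha t^{\alpha}/[\sinh t\,(x^{2}+t^{2})]\,dt$, the two $\alpha t^{\alpha}$ integrals cancel and what remains collapses via $\sinh t - \cosh t = -e^{-t}$ to
\[
\frac{d}{dx}H_{1}(\alpha,x) + \frac{2S(\alpha,x)}{x^{2}+\alpha^{2}} = -\int_{0}^{\infty}\frac{t^{\alpha+1}e^{-t}}{\sinh^{2}t\,(x^{2}+t^{2})}\,dt.
\]
The integrand on the right is non-negative and the integral converges for $\alpha>0$ (the behaviour at $t=0$ is $t^{\alpha-1}/x^{2}$), so the right-hand side is $\leq 0$, which is exactly the claim. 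The only non-routine step is spotting the antiderivative identity for $t/(x^{2}+t^{2})$; after that the derivation is an essentially mechanical integration by parts followed by the algebraic cancellation using $\sinh t - \cosh t = -e^{-t}$.
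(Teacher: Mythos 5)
Your proof is correct. The paper takes a different, though closely related, route: it first rewrites $H_{1}(\alpha,x)=x^{\alpha}F(\alpha,x)$ with $F(\alpha,x)=\int_{0}^{\infty}t^{\alpha}/(\sinh(xt)(1+t^{2}))\,dt$ (i.e.\ it rescales $t\mapsto xt$ so that the $x$-dependence sits inside $\sinh$), applies the product rule, and then uses the pointwise bound $\cosh(xt)\geq\sinh(xt)$ to land directly on $-\alpha x^{\alpha-1}R(\alpha,x)=-\tfrac{2}{x^{2}+\alpha^{2}}S(\alpha,x)$ straight from the definition of $S$ in terms of $R$; no integration by parts, no boundary terms, and no appeal to Lemma \ref{lemma51}. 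You instead differentiate the unrescaled integral, convert the kernel to a $t$-derivative via $\tfrac{t^{2}-x^{2}}{(x^{2}+t^{2})^{2}}=-\tfrac{d}{dt}\tfrac{t}{x^{2}+t^{2}}$, integrate by parts, and close with $\cosh t-\sinh t=e^{-t}\geq 0$ --- the same positivity as the paper's, deployed after the scaling substitution rather than before. What your route buys is an exact identity for the defect, namely $\tfrac{d}{dx}H_{1}(\alpha,x)+\tfrac{2}{x^{2}+\alpha^{2}}S(\alpha,x)=-\int_{0}^{\infty}t^{\alpha+1}e^{-t}/(\sinh^{2}t\,(x^{2}+t^{2}))\,dt$, which is slightly more informative than the bare inequality; the cost is the extra bookkeeping (vanishing of the boundary term and separate convergence of the two split integrals, both of which you correctly check for $\alpha>0$, $x>0$) and the reliance on the integral representation of $S$ from Lemma \ref{lemma51} rather than on its definition via $R$.
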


\begin{proof}
Using (\ref{Properties1}a) and by differentiating under the integral, we get%
\begin{align*}
\frac{d}{dx}H_{1}\left( \alpha,x\right)   & =\frac{d}{dx}\left( x^{\alpha
}F\left( \alpha,x\right) \right) \\
& =\alpha x^{\alpha-1}\left( \int_{0}^{\infty}\frac{t^{\alpha}}{\sinh
xt}\frac{dt}{1+t^{2}}-\frac{x}{\alpha}\int_{0}^{\infty}\frac{t^{\alpha+1}%
}{\sinh xt}\frac{\cosh xt}{\sinh xt}\frac{dt}{1+t^{2}}\right) \\
& \leq\alpha x^{\alpha-1}\left( \int_{0}^{\infty}\frac{t^{\alpha}}{\sinh
xt}\frac{dt}{1+t^{2}}-\frac{x}{\alpha}\int_{0}^{\infty}\frac{t^{\alpha+1}%
}{\sinh xt}\frac{dt}{1+t^{2}}\right) \\
& =-\alpha x^{\alpha-1}\left( \frac{x}{\alpha}F\left( \alpha+1,x\right)
-F\left( \alpha,x\right) \right) =-\alpha x^{\alpha-1}R\left(
\alpha,x\right) \\
& =-\alpha x^{\alpha-1}\frac{2}{\alpha}x^{1-\alpha}\frac{1}{x^{2}+\alpha^{2}%
}S\left( \alpha,x\right) =-\frac{2}{x^{2}+\alpha^{2}}S\left( \alpha
,x\right) .
\end{align*}
\end{proof}

\begin{lemma}
\label{lemma53}Let $\alpha>0$ be fixed and $x>0.$ Then
\begin{equation}
S\left( \alpha,x\right) =\frac{\alpha}{2}x^{\alpha-1}\left( x^{2}%
+\alpha^{2}\right) R\left( \alpha,x\right) \label{Thaler}%
\end{equation}
is an increasing function in $x$.
\end{lemma}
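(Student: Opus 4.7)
The plan is to use the integral representation of $S(\alpha,x)$ provided by Lemma \ref{lemma51} and differentiate directly under the integral sign with respect to $x$. This turns an apparently awkward statement about the derivative of a product $x^{\alpha-1}(x^2+\alpha^2)R(\alpha,x)$ into a very clean sign computation.

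Concretely, starting from
\[
S(\alpha,x) = \int_{0}^{\infty}\frac{t^{\alpha}(t-\alpha)}{2\sinh t}\,\frac{x^{2}+\alpha^{2}}{x^{2}+t^{2}}\,dt,
\]
only the factor $(x^{2}+\alpha^{2})/(x^{2}+t^{2})$ depends on $x$, and a short computation gives
\[
\frac{\partial}{\partial x}\frac{x^{2}+\alpha^{2}}{x^{2}+t^{2}} = \frac{2x\,(t^{2}-\alpha^{2})}{(x^{2}+t^{2})^{2}}.
\]
Multiplying by the remaining factor $t-\alpha$ produces the crucial identity
\[
(t-\alpha)(t^{2}-\alpha^{2}) = (t-\alpha)^{2}(t+\alpha) \geq 0, \qquad t\geq 0,
\]
so the differentiated integrand is pointwise non-negative. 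After writing out the result,
\[
\frac{d}{dx} S(\alpha,x) = \int_{0}^{\infty}\frac{t^{\alpha}(t-\alpha)^{2}(t+\alpha)}{\sinh t}\,\frac{x}{(x^{2}+t^{2})^{2}}\,dt \geq 0,
\]
and in fact this integral is strictly positive for $x>0$, giving the claimed monotonicity.

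The only technical point I would address carefully is the justification of differentiation under the integral sign. For $x$ restricted to an arbitrary compact subinterval $[a,b]\subset(0,\infty)$, one has the domination
\[
\left| \frac{t^{\alpha}(t-\alpha)}{2\sinh t}\cdot\frac{2x(t^{2}-\alpha^{2})}{(x^{2}+t^{2})^{2}}\right| \leq \frac{t^{\alpha}|t-\alpha||t^{2}-\alpha^{2}|\,b}{\sinh t \cdot (a^{2}+t^{2})^{2}},
\]
and the right-hand side is an integrable function of $t$ on $(0,\infty)$ because $\sinh t$ decays exponentially as $t\to\infty$ and the factor $(a^{2}+t^{2})^{2}$ in the denominator together with the polynomial numerator keeps the integrand bounded near $t=0$. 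A routine application of the Leibniz rule then legitimizes the interchange. This domination step is the only obstacle, and it is entirely standard; the real content of the lemma is the algebraic identity $(t-\alpha)(t^{2}-\alpha^{2})=(t-\alpha)^{2}(t+\alpha)$, which makes the sign of the derivative manifest.
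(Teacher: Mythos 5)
Your proof is correct and follows essentially the same route as the paper: differentiate the integral representation of $S(\alpha,x)$ from Lemma \ref{lemma51} under the integral sign and observe that $(t-\alpha)(t^{2}-\alpha^{2})=(t-\alpha)^{2}(t+\alpha)\geq 0$, yielding a manifestly positive derivative (the paper states exactly this computation, omitting the domination argument you supply). One pedantic remark: near $t=0$ the dominating function behaves like $t^{\alpha-1}$ rather than being bounded when $0<\alpha<1$, but it is still integrable there, so your justification stands.
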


\begin{proof}
Using Lemma (\ref{lemma51}) and by differentiating under the integral again,
we get
\begin{align*}
\frac{d}{dx}S\left( \alpha,x\right)   & =\int_{0}^{\infty}\frac{t^{\alpha
}\left( t-\alpha\right) }{2\sinh t}\frac{\partial}{\partial x}\left(
\frac{x^{2}+\alpha^{2}}{x^{2}+t^{2}}\right) dt\\
& =\int_{0}^{\infty}\frac{xt^{\alpha}\left( t-\alpha\right) ^{2}}{\sinh
t}\frac{t+\alpha}{\left( x^{2}+t^{2}\right) ^{2}}dt > 0.
\end{align*}
\end{proof}

\noindent
The rescaling of $R\left( \alpha,\cdot\right) $ in Lemma
\ref{lemma53} is now extremely useful in proving Theorem \ref{Theorem52}.
Considering formula (\ref{Thaler}) contributes to my colleague, Dr. Maximilian
Thaler, for which I thank him.

\begin{proof}[Proof of Theorem \ref{Theorem52}.]
By assuming the validity of Theorem \ref{Theorem51} there exists some $\alpha
_{0}>0$, such that $R\left( \alpha,\alpha\right) >0$, $\forall \alpha\geq
\alpha_{0}$. From this fact and (\ref{Thaler}) we deduce $S\left( \alpha
,\alpha\right) =\alpha^{\alpha+2}R\left( \alpha,\alpha\right) >0$, $\forall 
\alpha\geq\alpha_{0}$. Now, combining Lemma \ref{lemma52} together with
Lemma \ref{lemma53}, we establish for all $x\geq\alpha\geq\alpha_{0}$,
\[
\frac{d}{dx}H_{1}\left( \alpha,x\right) \leq-\frac{2}{x^{2}+\alpha^{2}%
}S\left( \alpha,x\right) \leq-\frac{2}{x^{2}+\alpha^{2}}S\left(
\alpha,\alpha\right) <0.
\]
\end{proof}

\noindent
We turn now to the proof for Theorem \ref{Theorem51}. As before, we
derive several Lemmas.

\begin{lemma}
\label{lemma54}Let $\alpha>2$ be fixed and $x>0$. Then
\[
F_{1}\left( \alpha,x\right) \leq F\left( \alpha,x\right) \leq F_{2}\left(
\alpha,x\right) .
\]
\end{lemma}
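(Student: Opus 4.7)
The plan is to exploit the geometric-series expansion of the hyperbolic sine and reduce everything to integrals of $G(\alpha,\cdot)$-type. Since $xt>0$, I would write
\[
\frac{1}{\sinh(xt)} \;=\; \frac{2e^{-xt}}{1-e^{-2xt}} \;=\; 2\sum_{k=0}^{\infty} e^{-(2k+1)xt},
\]
insert this into the definition of $F(\alpha,x)$, and interchange sum and integral (justified by Tonelli, since the integrand is non-negative). In each term I would then substitute $s=(2k+1)t$ to normalize the exponential, which yields
\[
F(\alpha,x) \;=\; 2\sum_{k=0}^{\infty} \frac{1}{(2k+1)^{\alpha-1}} \int_{0}^{\infty} \frac{s^{\alpha}}{(2k+1)^{2}+s^{2}}\,e^{-xs}\,ds.
\]

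Next I would bracket the rational kernel with two elementary inequalities, valid for all $k\geq 0$ and $s\geq 0$:
\[
\frac{1}{(2k+1)^{2}(1+s^{2})} \;\leq\; \frac{1}{(2k+1)^{2}+s^{2}} \;\leq\; \frac{1}{1+s^{2}}.
\]
The right-hand inequality follows because $(2k+1)^{2}\geq 1$, and the left-hand one from $(2k+1)^{2}+s^{2}\leq (2k+1)^{2}(1+s^{2})$. Inserting the upper bound under the integral produces a factor $(2k+1)^{-(\alpha-1)}G(\alpha,x)$, while the lower bound produces $(2k+1)^{-(\alpha+1)}G(\alpha,x)$.

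Summing the resulting geometric-type series over $k$ uses the standard identity
\[
2\sum_{k=0}^{\infty} \frac{1}{(2k+1)^{\beta}} \;=\; \Bigl(2-\tfrac{1}{2^{\beta-1}}\Bigr) Z(\beta), \qquad \beta>1,
\]
which one checks by splitting $Z(\beta)$ into odd and even indices. Applied with $\beta=\alpha+1$ this recovers the constant $\bigl(2-1/2^{\alpha}\bigr)Z(\alpha+1)$ in front of $G(\alpha,x)$, i.e.\ exactly $F_{1}(\alpha,x)$; applied with $\beta=\alpha-1$ (where $\alpha>2$ is precisely what guarantees convergence of $Z(\alpha-1)$) it gives $\bigl(2-1/2^{\alpha-2}\bigr)Z(\alpha-1)G(\alpha,x)=F_{2}(\alpha,x)$.

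There is no genuine obstacle: the argument is a direct manipulation. The only points requiring care are (i) the justification of the interchange of sum and integral at the very beginning, which is immediate from positivity, (ii) verifying that the hypothesis $\alpha>2$ is used in exactly one place, namely to ensure $Z(\alpha-1)<\infty$ so that the upper bound $F_{2}(\alpha,x)$ is finite, and (iii) checking that the substitution $s=(2k+1)t$ and the subsequent kernel comparison preserve the correct direction of the two inequalities.
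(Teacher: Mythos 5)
Your proposal is correct and follows essentially the same route as the paper: both derive the series representation $F(\alpha,x)=2\sum_{k\ge 0}(2k+1)^{-(\alpha-1)}\int_0^\infty t^\alpha e^{-xt}\left((2k+1)^2+t^2\right)^{-1}dt$ (your geometric-series expansion of $1/\sinh(xt)$ plus the substitution $s=(2k+1)t$ is exactly the "routine calculation" the paper leaves implicit), then sandwich the kernel between $\frac{1}{(2k+1)^2(1+t^2)}$ and $\frac{1}{1+t^2}$ and sum via the odd-index zeta identity. No gaps; your bookkeeping of the constants and of where $\alpha>2$ is needed matches the paper.
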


\begin{proof}By some routine calculations we obtain the representation
\begin{equation}
F\left( \alpha,x\right) =2\sum_{n=0}^{\infty}\frac{1}{\left( 1+2n\right)
^{\alpha-1}}\int_{0}^{\infty}\frac{t^{\alpha}e^{-xt}}{\left( 1+2n\right)
^{2}+t^{2}}dt.\label{F1}%
\end{equation}
Since $Z\left( \alpha\right)$ is the well known zeta function, from
(\cite{Gradshteyn}, 9.522.2) we derive for $\alpha>1$,
\begin{equation}
Z\left( \alpha\right) \left( 2-2^{1-\alpha}\right) =2\sum_{n=0}^{\infty
}\frac{1}{\left( 1+2n\right) ^{\alpha}}.\label{F2}%
\end{equation}
Combining (\ref{F1}) together with (\ref{F2}), we obtain for $\alpha>2$ the
right-hand side in Lemma \ref{lemma54} by%
\begin{align*}
F\left( \alpha,x\right)   & \leq2\sum_{n=0}^{\infty}\frac{1}{\left(
1+2n\right) ^{\alpha-1}}\int_{0}^{\infty}\frac{t^{\alpha}e^{-xt}}{1+t^{2}%
}dt\\
& =\left( 2-\frac{1}{2^{\alpha-2}}\right) Z\left( \alpha-1\right) G\left(
\alpha,x\right) \\
& =F_{2}\left( \alpha,x\right) .
\end{align*}
Similarly, for $\alpha>0$, the left-hand side in Lemma \ref{lemma54} can be derived by%
\begin{align*}
F\left( \alpha,x\right)   & \geq2\sum_{n=0}^{\infty}\frac{1}{\left(
1+2n\right) ^{\alpha-1}}\int_{0}^{\infty}\frac{t^{\alpha}e^{-xt}}{\left(
1+2n\right) ^{2}+\left( 1+2n\right) ^{2}t^{2}}dt\\
& =\left( 2-\frac{1}{2^{\alpha}}\right) Z\left( \alpha+1\right) G\left(
\alpha,x\right) \\
& =F_{1}\left( \alpha,x\right) .
\end{align*}
\end{proof}

\begin{lemma}
\label{lemma55}Let $\alpha>2$. Then%
\begin{align*}
R\left( \alpha,\alpha\right)  & \geq\left( 2-\frac{1}{2^{\alpha+1}%
}\right) G\left( \alpha+1,\alpha\right) \\
& -\left( 2-\frac{1}{2^{\alpha-2}}\right) \left( 1+\frac{1}{2^{\alpha-1}%
}+\frac{1}{\alpha-2}\frac{1}{2^{\alpha-2}}\right) G\left( \alpha
,\alpha\right) .
\end{align*}
\end{lemma}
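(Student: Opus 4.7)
The plan is to read off $R(\alpha,\alpha)$ from its definition, apply the two-sided envelope of Lemma \ref{lemma54} to both occurring $F$-integrals, and then replace the zeta factors by the elementary estimates that match the right-hand side of the claim.

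First, specialising the definition of $R$ at $x=\alpha$ collapses the prefactor $x/\alpha$ to $1$, so
\[
R(\alpha,\alpha)=F(\alpha+1,\alpha)-F(\alpha,\alpha).
\]
I would lower-bound the first term by Lemma \ref{lemma54} applied with parameter $\alpha+1$, which gives $F(\alpha+1,\alpha)\ge F_{1}(\alpha+1,\alpha)=(2-2^{-(\alpha+1)})\,Z(\alpha+2)\,G(\alpha+1,\alpha)$, and upper-bound the second term, again by Lemma \ref{lemma54}, by $F(\alpha,\alpha)\le F_{2}(\alpha,\alpha)=(2-2^{-(\alpha-2)})\,Z(\alpha-1)\,G(\alpha,\alpha)$. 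Subtracting yields
\[
R(\alpha,\alpha)\ge \bigl(2-2^{-(\alpha+1)}\bigr) Z(\alpha+2)\, G(\alpha+1,\alpha)-\bigl(2-2^{-(\alpha-2)}\bigr) Z(\alpha-1)\, G(\alpha,\alpha).
\]

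Now I would eliminate the zeta factors. For the first term I simply use $Z(\alpha+2)\ge 1$ (the $n=1$ summand), which preserves the direction of the inequality since the coefficient $(2-2^{-(\alpha+1)})G(\alpha+1,\alpha)$ is positive. For the second term, which enters with a minus sign, I need an \emph{upper} estimate on $Z(\alpha-1)$. Splitting off the first two summands and bounding the rest by an integral comparison,
\[
Z(\alpha-1)=1+\frac{1}{2^{\alpha-1}}+\sum_{n=3}^{\infty}\frac{1}{n^{\alpha-1}}\le 1+\frac{1}{2^{\alpha-1}}+\int_{2}^{\infty}\frac{dx}{x^{\alpha-1}}=1+\frac{1}{2^{\alpha-1}}+\frac{1}{\alpha-2}\,\frac{1}{2^{\alpha-2}},
\]
which is valid for $\alpha>2$ and is precisely the bracket appearing in the statement. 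Inserting this into the previous inequality produces the asserted lower bound verbatim.

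No substantial obstacle is anticipated: the argument is essentially two invocations of Lemma \ref{lemma54} followed by two elementary zeta estimates. The only point requiring care is keeping the direction of the inequalities straight (the lower bound on $Z(\alpha+2)$ is applied where the factor enters positively and the upper bound on $Z(\alpha-1)$ where it enters negatively), and making sure the integral-comparison tail bound uses the interval $[2,\infty)$ so that the constant $\tfrac{1}{(\alpha-2)2^{\alpha-2}}$ matches the third summand in the claimed bracket exactly.
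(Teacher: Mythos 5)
Your proposal is correct and follows essentially the same route as the paper: specialise $R$ at $x=\alpha$, apply Lemma \ref{lemma54} to get $R(\alpha,\alpha)\ge F_{1}(\alpha+1,\alpha)-F_{2}(\alpha,\alpha)$, then use $Z(\alpha+2)\ge 1$ and the upper bound $Z(\alpha-1)\le 1+2^{-(\alpha-1)}+\frac{1}{\alpha-2}2^{-(\alpha-2)}$ (which the paper quotes as a routine zeta estimate and you derive by integral comparison). The directions of all inequalities are handled correctly.
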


\begin{proof}
By using a routine estimate for the zeta function, namely
\[
1<Z\left( \alpha\right) <1+\frac{1}{2^{\alpha}}+\frac{1}{\alpha-1}\frac
{1}{2^{\alpha-1}},\quad\alpha>1,
\]
we combine this together with Lemma \ref{lemma54}. For $\alpha>2$ it then
follows%
\begin{align*}
R\left( \alpha,\alpha\right)  & =F\left( \alpha+1,\alpha\right) -F\left(
\alpha,\alpha\right) \\
& \geq F_{1}\left( \alpha+1,a\right) -F_{2}\left( \alpha,\alpha\right) \\
& =\left( 2-\frac{1}{2^{\alpha+1}}\right) Z\left( \alpha+2\right)
G\left( \alpha+1,\alpha\right) \\
& -\left( 2-\frac{1}{2^{\alpha-2}}\right) Z\left( \alpha-1\right)
G\left( \alpha,\alpha\right) \\
& \geq\left( 2-\frac{1}{2^{\alpha+1}}\right) \cdot1\cdot G\left(
\alpha+1,\alpha\right) \\
& -\left( 2-\frac{1}{2^{\alpha-2}}\right) \left( 1+\frac{1}{2^{\alpha-1}%
}+\frac{1}{\alpha-2}\frac{1}{2^{\alpha-2}}\right) G\left( \alpha
,\alpha\right) .
\end{align*}
\end{proof}

\begin{lemma}
\label{lemma56}Let $\alpha>0$ and $c\geq0$. Then, as $\alpha\rightarrow
\infty,$ we have the following asymptotics.
\begin{align*}
G\left( \alpha,\alpha\right)  & =\sqrt{\frac{2\pi}{\alpha}}e^{-\alpha
}\left( \frac{1}{2}-\frac{5}{24}\frac{1}{\alpha}+\frac{61}{576}\frac
{1}{\alpha^{2}}+O\left( \alpha^{-3}\right) \right) ,\\
G\left( \alpha+1,\alpha\right)  & =\sqrt{\frac{2\pi}{\alpha}}e^{-\alpha
}\left( \frac{1}{2}-\frac{5}{24}\frac{1}{\alpha}+\frac{205}{576}\frac
{1}{\alpha^{2}}+O\left( \alpha^{-3}\right) \right) ,\\
G\left( \alpha,\alpha+c\right)  & =\sqrt{\frac{2\pi}{\alpha}}e^{-\alpha
}\left( \frac{e^{-c}}{2}+O\left( \alpha^{-1}\right) \right) .
\end{align*}
\end{lemma}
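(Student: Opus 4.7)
The plan is to apply the classical Laplace/Watson method, as already flagged by the author in the section's introduction. Writing the exponential weight as $t^{\alpha}e^{-\alpha t}=e^{\alpha\phi(t)}$ with phase $\phi(t)=\ln t-t$, each of the three quantities takes the standard form
\[
G(\alpha,\alpha)=\int_{0}^{\infty}e^{\alpha\phi(t)}\frac{dt}{1+t^{2}},\quad G(\alpha+1,\alpha)=\int_{0}^{\infty}e^{\alpha\phi(t)}\frac{t\,dt}{1+t^{2}},\quad G(\alpha,\alpha+c)=\int_{0}^{\infty}e^{\alpha\phi(t)}\frac{e^{-ct}\,dt}{1+t^{2}}.
\]
The phase has a unique interior maximum at $t_{0}=1$ with $\phi(1)=-1$, $\phi'(1)=0$, $\phi''(1)=-1$, and in general $\phi^{(k)}(1)=(-1)^{k-1}(k-1)!$, so that $\phi(1+\tau)+1=-\tfrac{\tau^{2}}{2}+\tfrac{\tau^{3}}{3}-\tfrac{\tau^{4}}{4}+\tfrac{\tau^{5}}{5}-\tfrac{\tau^{6}}{6}+\cdots$ for small $\tau$.

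The first step is localization: for any fixed $\delta>0$ the contributions of each integral over $[0,1-\delta]\cup[1+\delta,\infty)$ are of order $e^{-\alpha(1+\eta(\delta))}$, hence negligible compared with the target scale $\sqrt{2\pi/\alpha}\,e^{-\alpha}$. I would take $\delta$ shrinking like $\alpha^{-1/3}$ so that the Taylor remainders of $\phi$ and of each amplitude $\psi_{\beta}$ are controlled on $[1-\delta,1+\delta]$. The second step is the rescaling $t=1+s/\sqrt{\alpha}$, under which
\[
\alpha\bigl(\phi(t)+1\bigr)=-\frac{s^{2}}{2}+\frac{s^{3}}{3\sqrt{\alpha}}-\frac{s^{4}}{4\alpha}+\frac{s^{5}}{5\alpha^{3/2}}-\frac{s^{6}}{6\alpha^{2}}+O\bigl(\alpha^{-5/2}\bigr).
\]
One expands $\exp$ of the correction as a power series in $1/\sqrt{\alpha}$ with polynomial coefficients in $s$, and likewise expands $\psi_{\beta}(1+s/\sqrt{\alpha})$ to the required order. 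Multiplying these two series and integrating term by term against $e^{-s^{2}/2}\,ds$ via the Gaussian moments $\int_{-\infty}^{\infty}s^{2k}e^{-s^{2}/2}\,ds=(2k-1)!!\sqrt{2\pi}$ produces the asymptotic expansion. Terms with odd powers of $s$ vanish by symmetry, so after dividing out the $\sqrt{\alpha}$ from the Jacobian one obtains an expansion in integer powers of $1/\alpha$ with overall factor $\sqrt{2\pi/\alpha}\,e^{-\alpha}$.

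For $G(\alpha,\alpha+c)$ only the leading term is required: the amplitude equals $e^{-c}/2$ at $t=1$, yielding the stated formula immediately. For $G(\alpha,\alpha)$ and $G(\alpha+1,\alpha)$ one must track terms through order $\alpha^{-2}$. Both amplitudes equal $1/2$ at $t=1$, producing the common leading factor $\tfrac{1}{2}\sqrt{2\pi/\alpha}\,e^{-\alpha}$; the discrepancy between the constants $61/576$ and $205/576$ at order $\alpha^{-2}$ is driven entirely by the differing derivatives $\psi'(1),\psi''(1),\psi'''(1),\psi^{(4)}(1)$ of $1/(1+t^{2})$ versus $t/(1+t^{2})$. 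The main (and only) obstacle is bookkeeping: enumerating all cross-products between the expansion of $\exp\bigl(\tfrac{s^{3}}{3\sqrt{\alpha}}-\tfrac{s^{4}}{4\alpha}+\tfrac{s^{5}}{5\alpha^{3/2}}-\tfrac{s^{6}}{6\alpha^{2}}\bigr)$ and the Taylor expansion of the amplitude up to the appropriate total order, then performing the Gaussian moment integrations and checking that the resulting rationals collapse to $-5/24$, $61/576$, and $205/576$. Uniformity of the $O(\alpha^{-3})$ remainder is justified by truncating each Taylor series with an explicit remainder and observing that the omitted tail terms remain integrable against $e^{-s^{2}/2}$ after rescaling; no new analytic idea beyond classical Laplace asymptotics is needed.
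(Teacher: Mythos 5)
Your proposal is correct in substance, but it implements the Laplace idea differently from the paper. You treat $t=1$ as an interior maximum of the phase $\phi(t)=\ln t - t$, substitute $t=1+s/\sqrt{\alpha}$, and integrate the product of the two Taylor expansions against Gaussian moments, with the odd powers of $s$ (hence the half-integer powers of $1/\alpha$) dying by symmetry. The paper instead deliberately splits each integral as $\int_0^\infty=\int_0^1+\int_1^\infty$ so that $t=1$ becomes an \emph{endpoint} minimum of $g(t)=t-\log t$ on each piece, applies the generalized Watson lemma (\cite{Olver}, Theorem 8.1) to each half separately, and computes the expansion coefficients $a_n^{(k,c)}$ via the residue formula (\ref{residue}) from \cite{OlverLozier} using symbolic software; the half-integer powers $\alpha^{-1/2},\alpha^{-3/2},\alpha^{-5/2}$ then appear in each half with opposite signs and cancel upon addition. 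The two routes are equivalent in content and both yield $-5/24$, $61/576$, $205/576$ (a direct check with the standard second-order Laplace correction formula confirms the $-5/24$ coefficient from your setup). What your version buys is self-containedness and an automatic explanation for why only integer powers of $1/\alpha$ survive; what the paper's version buys is a closed-form residue expression for every coefficient, which makes the order-$5$ bookkeeping mechanical rather than a large by-hand enumeration of cross-products. The one point you should make explicit if writing this up is the uniform control of the remainder after truncating $\exp\bigl(\tfrac{s^3}{3\sqrt{\alpha}}-\cdots\bigr)$, since those correction terms grow polynomially (indeed like $e^{cs^3/\sqrt{\alpha}}$) in $s$ and integrability against $e^{-s^2/2}$ alone is not quite the whole story on the window $|s|\le \alpha^{1/6}$; this is routine but is exactly the step the citation to Olver's theorem discharges for the author.
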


\begin{proof}
We prove the relations with the generalized Watson Lemma. Let $\alpha>0$,
$k=0,1$ and $c\geq0.$ Then%
\begin{align*}
G\left( \alpha+k,\alpha+c\right)  & =\int_{0}^{\infty}\frac{t^{k}}%
{e^{ct}\left( 1+t^{2}\right) }e^{-\alpha\left( t-\log t\right) }dt\\
& =\int_{0}^{\infty}f_{k,c}\left( t\right) e^{-\alpha g\left( t\right)
}dt,
\end{align*}
with $f_{k,c}\left( t\right) =t^{k}/\left( e^{ct}\left( 1+t^{2}\right)
\right) $ and $g\left( t\right) =t-\log t.$ Before applying the Watson
Lemma, we have to split the integral in two parts $G\left( \alpha
+k,\alpha+c\right) =\int_{0}^{\infty}=\int_{1}^{\infty}+\int_{0}^{1},$
because $g$ has exactly one single minimum at $a=1$. After verifying the
conditions for the Watson Lemma (\cite{Olver}, Theorem 8.1) it allows us to
expand the integral $\int_{1}^{\infty}$ into an asymptotic series of the form%
\[
\int_{a}^{\infty}f_{k,c}\left( t\right) e^{-\alpha g\left( t\right)
}dt\simeq e^{-\alpha g\left( a\right) }\sum_{n=0}^{\infty}\Gamma\left(
\frac{n+\lambda}{\mu}\right) \frac{a_{n}^{\left( k,c\right) }}%
{\alpha^{\left( n+\lambda\right) /\mu}},\quad\alpha\rightarrow\infty,
\]
with certain coefficients $\lambda,\mu$ and $a_{n}^{\left( k,c\right) }$.
For the second integral $\int_{0}^{1}$ we have to apply a suitable
transformation before expanding it. It is worth mentioning, that in the
classical textbooks on asymptotic analysis (compare \cite{Olver}, p. 86)
there is no general formula for the coefficients $a_{n}$ available. Only the
first one or two coefficients are derived and as it can be easily checked,
they are of rather complicated nature. Surprisingly, in the newer literature
(\cite{OlverLozier}, Formula 2.3.18) one can find a remarkable easy
representation for these coefficients in terms of some residues as well as a
reference for its derivation, namely (in our context)%
\begin{equation}
a_{n}^{\left( k,c\right) }=\frac{1}{\mu}\left. \text{Res}\right\vert
_{t=a}\left( \frac{f_{k,c}\left( t\right) }{\left( g\left( t\right)
-g\left( a\right) \right) ^{\left( n+\lambda\right) /\mu}}\right) ,\quad
n=0,1,2,\ldots\label{residue}%
\end{equation}
We used a symbolic computation software for the computation of the residues in
(\ref{residue}), but we do not present the general outcome of these formulas.
This would fill several pages. However, since the calculations are of crucial
importance in the proof for Theorem \ref{Theorem51}, we present all relevant outputs. For
$k=0,1$ and $c=0$ we calculate%
\[%
\begin{array}
[c]{ll}%
a_{0}^{\left( k,0\right) }=\frac{1}{2\sqrt{2}}, & a_{3}^{\left( k,0\right)
}=\frac{45k^{3}-90k^{2}-90k+86}{270},\\
a_{1}^{\left( k,0\right) }=\frac{3k-1}{6}, & a_{4}^{\left( k,0\right)
}=\frac{36k^{4}-120k^{3}-96k^{2}+324k+61}{432\sqrt{2}},\\
a_{2}^{\left( k,0\right) }=\frac{6k^{2}-6k-5}{12\sqrt{2}}, & a_{5}^{\left(
k,0\right) }=\frac{189k^{5}-945k^{4}-315k^{3}+4683k^{2}+168k-3730}{11340}.
\end{array}
\]
For $c\geq0$, we compute $a_{0}^{\left( 0,c\right) }=e^{-c}\frac{1}%
{2\sqrt{2}}$ and $a_{1}^{\left( 0,c\right) }=-e^{-c}\frac{1+3c}{6}.$ With
$\lambda=1$ and $\mu=2$ we obtain for $\alpha\rightarrow\infty$,
\begin{align*}
\int_{1}^{\infty}f_{0,0}\left( t\right) e^{-\alpha g\left( t\right) }dt &
=\sqrt{\frac{2\pi}{\alpha}}e^{-\alpha}\left( \frac{1}{4}-\frac{1}{6\sqrt
{2\pi}}\frac{1}{\sqrt{\alpha}}-\frac{5}{48}\frac{1}{\alpha}+\frac{43}%
{135\sqrt{2\pi}}\frac{1}{\alpha\sqrt{\alpha}}\right. \\
& \left. +\frac{61}{1152}\frac{1}{\alpha^{2}}-\frac{746}{1143\sqrt{2\pi}%
}\frac{1}{\alpha^{2}\sqrt{\alpha}}+O\left( \alpha^{-3}\right) \right) ,\\
\int_{1}^{\infty}f_{1,0}\left( t\right) e^{-\alpha g\left( t\right) }dt &
=\sqrt{\frac{2\pi}{\alpha}}e^{-\alpha}\left( \frac{1}{4}+\frac{1}{3\sqrt
{2\pi}}\frac{1}{\sqrt{\alpha}}-\frac{5}{48}\frac{1}{\alpha}-\frac{49}%
{270\sqrt{2\pi}}\frac{1}{\alpha\sqrt{\alpha}}\right. \\
& \left. +\frac{205}{1152}\frac{1}{\alpha^{2}}+\frac{5}{567\sqrt{2\pi}}%
\frac{1}{\alpha^{2}\sqrt{\alpha}}+O\left( \alpha^{-3}\right) \right) ,\\
\int_{1}^{\infty}f_{0,c}\left( t\right) e^{-\alpha g\left( t\right) }dt &
=\sqrt{\frac{2\pi}{\alpha}}e^{-\alpha}\left( \frac{e^{-c}}{4}-e^{-c}%
\frac{1+3c}{6\sqrt{2\pi}}\frac{1}{\sqrt{\alpha}}+O\left( \alpha^{-1}\right)
\right) .
\end{align*}
Proceeding in the same way for the second integral $\int_{0}^{1}$, we compute%
\[%
\begin{array}
[c]{ll}%
a_{0}^{\left( k,0\right) }=\frac{1}{2\sqrt{2}}, & a_{3}^{\left( k,0\right)
}=-\frac{45k^{3}-90k^{2}-90k+86}{270},\\
a_{1}^{\left( k,0\right) }=-\frac{3k-1}{6}, & a_{4}^{\left( k,0\right)
}=\frac{36k^{4}-120k^{3}-96k^{2}+324k+61}{432\sqrt{2}},\\
a_{2}^{\left( k,0\right) }=\frac{6k^{2}-6k-5}{12\sqrt{2}}, & a_{5}^{\left(
k,0\right) }=-\frac{189k^{5}-945k^{4}-315k^{3}+4683k^{2}+168k-3730}{11340}.
\end{array}
\]
For $c\geq0$, we compute $a_{0}^{\left( 0,c\right) }=\frac{e^{-c}}{2\sqrt
{2}}$ and $a_{1}^{\left( 0,c\right) }=e^{-c}\frac{1+3c}{6}$. Again with
$\lambda=1$ and $\mu=2$ we obtain for $\alpha\rightarrow\infty$,
\begin{align*}
\int_{0}^{1}f_{0,0}\left( t\right) e^{-\alpha g\left( t\right) }dt &
=\sqrt{\frac{2\pi}{\alpha}}e^{-\alpha}\left( \frac{1}{4}+\frac{1}{6\sqrt
{2\pi}}\frac{1}{\sqrt{\alpha}}-\frac{5}{48}\frac{1}{\alpha}-\frac{43}%
{135\sqrt{2\pi}}\frac{1}{\alpha\sqrt{\alpha}}\right. \\
& \left. +\frac{61}{1152}\frac{1}{\alpha^{2}}+\frac{746}{1143\sqrt{2\pi}%
}\frac{1}{\alpha^{2}\sqrt{\alpha}}+O\left( \alpha^{-3}\right) \right) ,\\
\int_{0}^{1}f_{1,0}\left( t\right) e^{-\alpha g\left( t\right) }dt &
=\sqrt{\frac{2\pi}{\alpha}}e^{-\alpha}\left( \frac{1}{4}-\frac{1}{3\sqrt
{2\pi}}\frac{1}{\sqrt{\alpha}}-\frac{5}{48}\frac{1}{\alpha}+\frac{49}%
{270\sqrt{2\pi}}\frac{1}{\alpha\sqrt{\alpha}}\right. \\
& \left. +\frac{205}{1152}\frac{1}{\alpha^{2}}-\frac{5}{567\sqrt{2\pi}}%
\frac{1}{\alpha^{2}\sqrt{\alpha}}+O\left( \alpha^{-3}\right) \right) ,\\
\int_{0}^{1}f_{0,c}\left( t\right) e^{-\alpha g\left( t\right) }dt &
=\sqrt{\frac{2\pi}{\alpha}}e^{-\alpha}\left( \frac{e^{-c}}{4}+e^{-c}%
\frac{1+3c}{6\sqrt{2\pi}}\frac{1}{\sqrt{\alpha}}+O\left( \alpha^{-1}\right)
\right) .
\end{align*}
Collecting the results we finally arrive at the expansions in Lemma \ref{lemma56}.
\end{proof}

\begin{lemma}
\label{lemma57}There exists some $\alpha_{1}>0$, such that%
\[
G\left( \alpha+1,\alpha\right) -\left( 1+\frac{1}{\alpha^{3}}\right)
G\left( \alpha,\alpha\right) >0,\quad\forall\alpha\geq\alpha_{1}.
\]
\end{lemma}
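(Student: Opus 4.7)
The plan is to reduce the inequality to the asymptotic expansions supplied by Lemma \ref{lemma56} and to exploit the cancellation of the first two terms in the difference $G(\alpha+1,\alpha) - G(\alpha,\alpha)$.

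First, I would subtract the two expansions from Lemma \ref{lemma56} term by term. Because the coefficients $\tfrac{1}{2}$ and $-\tfrac{5}{24\alpha}$ coincide in both expansions, these contributions cancel and the leading surviving term comes from the $1/\alpha^{2}$ coefficients $61/576$ and $205/576$:
\[
G(\alpha+1,\alpha) - G(\alpha,\alpha) = \sqrt{\frac{2\pi}{\alpha}}\, e^{-\alpha}\left(\frac{205-61}{576}\cdot\frac{1}{\alpha^{2}} + O(\alpha^{-3})\right) = \sqrt{\frac{2\pi}{\alpha}}\, e^{-\alpha}\left(\frac{1}{4\alpha^{2}} + O(\alpha^{-3})\right).
\]
Next, Lemma \ref{lemma56} also gives $\alpha^{-3} G(\alpha,\alpha) = \sqrt{2\pi/\alpha}\, e^{-\alpha}\cdot O(\alpha^{-3})$, so combining the two estimates yields
\[
G(\alpha+1,\alpha) - \Bigl(1 + \frac{1}{\alpha^{3}}\Bigr) G(\alpha,\alpha) = \sqrt{\frac{2\pi}{\alpha}}\, e^{-\alpha}\left(\frac{1}{4\alpha^{2}} + O(\alpha^{-3})\right),
\]
and the strictly positive dominant term $1/(4\alpha^{2})$ eventually majorizes the $O(\alpha^{-3})$ remainder. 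Choosing $\alpha_{1}$ large enough so that the parenthesis on the right is bounded below by, say, $1/(8\alpha^{2})$ for all $\alpha \geq \alpha_{1}$, the left-hand side is strictly positive on $[\alpha_{1},\infty)$, which is the claim.

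In this sense the lemma is a one-line corollary of Lemma \ref{lemma56}; essentially all the genuine work is hidden in the Watson expansion used to prove that lemma. The reason the expansions there were pushed to order $1/\alpha^{2}$ --- and, in particular, the reason both coefficients $61/576$ and $205/576$ are required --- is precisely to guarantee that, after the cancellation of the two leading terms of the asymptotic series, a \emph{strictly positive} coefficient ($\tfrac{1}{4}$) survives at order $1/\alpha^{2}$; otherwise the penalty term $\alpha^{-3}G(\alpha,\alpha)$ on the left would not be visibly absorbable. The main potential obstacle is therefore purely bookkeeping: one must check that the numerical arithmetic $205 - 61 = 144 = 576/4$ is correct, and that the implicit constants in the $O(\alpha^{-3})$ remainders of Lemma \ref{lemma56} are effective enough to allow an explicit choice of $\alpha_{1}$ if desired.
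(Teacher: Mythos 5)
Your argument is correct and is essentially identical to the paper's own proof: both subtract the two expansions from Lemma \ref{lemma56}, observe that the leading terms cancel to leave the coefficient $\frac{205-61}{576}=\frac{1}{4}$ at order $\alpha^{-2}$, and note that the penalty term $\alpha^{-3}G(\alpha,\alpha)$ is absorbed into the $O(\alpha^{-3})$ remainder. Nothing further is needed.
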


\begin{proof}
From Lemma \ref{lemma56}, we calculate
\[
G\left( \alpha+1,\alpha\right) -G\left( \alpha,\alpha\right) =\sqrt
{\frac{2\pi}{\alpha}}e^{-\alpha}\left( \frac{1}{4\alpha^{2}}+O\left(
\alpha^{-3}\right) \right) ,\quad\alpha\rightarrow\infty.
\]
Now, combining the last expression together with Lemma \ref{lemma56}, we
obtain for $\alpha\rightarrow\infty$ the asymptotics
\begin{align*}
& G\left( \alpha+1,\alpha\right) -G\left( \alpha,\alpha\right) -\frac
{1}{\alpha^{3}}G\left( \alpha,\alpha\right) \\
& =\sqrt{\frac{2\pi}{\alpha}}e^{-\alpha}\left( \frac{1}{4\alpha^{2}%
}+O\left( \alpha^{-3}\right) \right) -\frac{1}{\alpha^{3}}\sqrt{\frac{2\pi
}{\alpha}}e^{-\alpha}\left( \frac{1}{2}+O\left( \alpha^{-1}\right) \right)
\\
& =\sqrt{\frac{2\pi}{\alpha}}e^{-\alpha}\frac{1}{4\alpha^{2}}\left(
1+O\left( \alpha^{-1}\right) \right) .
\end{align*}
The assertion now follows.
\end{proof}

\begin{proof}[Proof of Theorem \ref{Theorem51}.]
Let $\alpha>\max\left( 2, \alpha_{1}\right)$. Combining Lemma \ref{lemma55} together with Lemma \ref{lemma57}, we deduce%
\[
R\left( \alpha,\alpha\right) \geq\left( -\frac{1}{2^{\alpha+1}}%
+\frac{2-\frac{1}{2^{\alpha+1}}}{\alpha^{3}}+\frac{1}{2^{2\alpha-3}}%
-\frac{1-\frac{1}{2^{\alpha-1}}}{\left( \alpha-2\right) 2^{\alpha-3}%
}\right) G\left( \alpha,\alpha\right) .
\]
Since $G\left(\alpha,\alpha\right) >0, \forall \alpha>0$, an easy
calculation reveals that the remaining term in the last expression becomes
positive, at least for all $\alpha\geq\alpha_{0}=\max\left(14, \alpha_{1} \right) $.
\end{proof}

\noindent
We turn now to the proof for Theorem \ref{Theorem53}, again by
establishing some Lemmas. Without proof, we first present the following

\begin{lemma}
\label{lemma58}Let $\alpha>0$ and $\beta=\beta\left( \alpha\right)
=\pi\left[ \frac{\alpha}{\pi}\right] +\frac{3}{2}\pi.$ Then%
\[%
\begin{array}
[c]{ll}%
\text{(a)} & \alpha+\dfrac{\pi}{2}<\beta\leq\alpha+\dfrac{3}{2}\pi,\\
\text{(b)} & \left\vert H\left( \alpha,\beta\right) \right\vert
=H_{1}\left( \alpha,\beta\right) .
\end{array}
\]
\end{lemma}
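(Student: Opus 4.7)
The statement is mostly a direct computation once one notices the right algebraic identity between $H$ and $H_{1}$.

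\textbf{Plan for part (a).} This is an immediate consequence of the floor-function bound $x-1<[x]\leq x$ noted in Section~2. Applying it to $x=\alpha/\pi$ and multiplying through by $\pi$ gives $\alpha-\pi<\pi[\alpha/\pi]\leq\alpha$, and then adding $\tfrac{3}{2}\pi$ to all three sides yields exactly $\alpha+\tfrac{\pi}{2}<\beta\leq\alpha+\tfrac{3}{2}\pi$. No analytic work is needed.

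\textbf{Plan for part (b).} The key observation is that in the integrand defining $H(\alpha,x)$, the factor $\sin(x)$ is independent of the integration variable $t$, so it pulls out of the integral:
\[
H(\alpha,x)=\sin(x)\int_{0}^{\infty}\frac{t^{\alpha}}{\sinh(t)}\frac{x}{x^{2}+t^{2}}\,dt=\sin(x)\,H_{1}(\alpha,x),\qquad x>0.
\]
Since the integrand defining $H_{1}(\alpha,x)$ is non-negative for $x>0$ and $\alpha>0$, we have $H_{1}(\alpha,x)\geq 0$, and so $|H(\alpha,x)|=|\sin(x)|\,H_{1}(\alpha,x)$. It remains to evaluate $\sin(\beta)$. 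Writing $k=[\alpha/\pi]\in\mathbb{Z}_{\geq 0}$ (which is meaningful because part (a) already gives $\beta>0$), we compute
\[
\sin(\beta)=\sin\!\left(k\pi+\tfrac{3}{2}\pi\right)=(-1)^{k}\sin\!\left(\tfrac{3}{2}\pi\right)=(-1)^{k+1},
\]
so $|\sin(\beta)|=1$ and therefore $|H(\alpha,\beta)|=H_{1}(\alpha,\beta)$.

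\textbf{Main obstacle.} There is essentially none: part (a) is a floor-function estimate and part (b) rests on the trivial factorisation $H(\alpha,x)=\sin(x)H_{1}(\alpha,x)$ together with the fact that $\beta$ was chosen precisely to place $x$ at a peak of $|\sin x|$. The role of the lemma is not to be deep on its own but to supply, for use in the asymptotic analysis of $\|H(\alpha,\cdot)\|_{L_{\infty}[0,\infty)}$ as $\alpha\to\infty$, a specific point $\beta$ near $\alpha$ at which the envelope $H_{1}(\alpha,\cdot)$ and $|H(\alpha,\cdot)|$ coincide; part (a) guarantees that this point lies just to the right of $\alpha$ (within distance $\tfrac{3}{2}\pi$), which together with the monotonicity statement of Theorem~\ref{Theorem52} will eventually let one compare $|H(\alpha,\beta)|=H_{1}(\alpha,\beta)$ with the already-analysed value $H_{1}(\alpha,\alpha)$ from Theorem~\ref{Theorem41}.
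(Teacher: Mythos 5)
Your proof is correct; the paper itself states Lemma \ref{lemma58} without proof, and your argument (the floor-function bound for (a), and the factorisation $H(\alpha,x)=\sin(x)\,H_1(\alpha,x)$ with $|\sin\beta|=1$ and $H_1\geq 0$ for (b)) is exactly the elementary verification the author intends the reader to supply.
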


\begin{lemma}
\label{lemma59}Let $\alpha>0$ and $c\geq0.$ Then%
\[
\frac{G\left( \alpha,\alpha+c\right) }{G\left( \alpha,\alpha\right)
}=e^{-c}\left( 1+O\left( \alpha^{-1}\right) \right) ,\quad\alpha
\rightarrow\infty.
\]
\end{lemma}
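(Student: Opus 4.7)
The plan is to obtain Lemma \ref{lemma59} as an essentially immediate corollary of the Watson-Lemma expansions already collected in Lemma \ref{lemma56}. From that lemma we have the leading order asymptotics
\[
G\left(\alpha,\alpha\right) = \sqrt{\frac{2\pi}{\alpha}}\,e^{-\alpha}\left(\frac{1}{2} + O(\alpha^{-1})\right),\quad G\left(\alpha,\alpha+c\right) = \sqrt{\frac{2\pi}{\alpha}}\,e^{-\alpha}\left(\frac{e^{-c}}{2} + O(\alpha^{-1})\right).
\]
Since both right-hand sides share the prefactor $\sqrt{2\pi/\alpha}\,e^{-\alpha}$, my first step is to form the quotient and cancel this common factor, yielding
\[
\frac{G(\alpha,\alpha+c)}{G(\alpha,\alpha)} = \frac{\tfrac{e^{-c}}{2} + O(\alpha^{-1})}{\tfrac{1}{2} + O(\alpha^{-1})}.
\]

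Next, I factor $e^{-c}/2$ from the numerator and $1/2$ from the denominator to rewrite the ratio as $e^{-c}\cdot(1+O(\alpha^{-1}))/(1+O(\alpha^{-1}))$, where we absorb the $c$-dependent prefactors (which are fixed once $c$ is fixed) into the implicit constants. Expanding $(1+O(\alpha^{-1}))^{-1} = 1 + O(\alpha^{-1})$ by the usual geometric series argument and multiplying the two $1+O(\alpha^{-1})$ factors produces the claimed $e^{-c}(1+O(\alpha^{-1}))$.

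There is essentially no obstacle here: the lemma is a direct arithmetic consequence of Lemma \ref{lemma56}. The only point requiring a brief word is that the implicit constant in the $O(\alpha^{-1})$ term of the expansion of $G(\alpha,\alpha+c)$ may depend on $c$, but since $c\geq 0$ is held fixed throughout, this dependence is harmless and the asymptotic relation holds in the stated form as $\alpha\to\infty$.
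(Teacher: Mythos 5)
Your proposal is correct and follows exactly the paper's own argument: form the quotient of the two expansions from Lemma \ref{lemma56}, cancel the common factor $\sqrt{2\pi/\alpha}\,e^{-\alpha}$, and simplify the ratio of the parenthesized series. The extra remarks on inverting $1+O(\alpha^{-1})$ and on the $c$-dependence of the implicit constant are sensible elaborations of what the paper leaves implicit.
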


\begin{proof}
From Lemma \ref{lemma56}, we simply derive%
\begin{align*}
\frac{G\left( \alpha,\alpha+c\right) }{G\left( \alpha,\alpha\right) } &
=\frac{\sqrt{\frac{2\pi}{\alpha}}e^{-\alpha}\left( \frac{e^{-c}}{2}+O\left(
\alpha^{-1}\right) \right) }{\sqrt{\frac{2\pi}{\alpha}}e^{-\alpha}\left(
\frac{1}{2}+O\left( \alpha^{-1}\right) \right) }\\
& =e^{-c}\left( 1+O\left( \alpha^{-1}\right) \right).
\end{align*}
\end{proof}

\begin{lemma}
\label{lemma510}Let $\alpha>2.$ Then%
\[
H_{1}\left( \alpha,\alpha+\frac{3}{2}\pi\right) =H_{1}\left( \alpha
,\alpha\right) \left( 1+o\left( 1\right) \right) ,\quad\alpha
\rightarrow\infty.
\]
\end{lemma}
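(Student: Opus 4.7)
The plan is to exploit property (\ref{Properties1}a), namely $H_{1}(\alpha,x)=x^{\alpha}F(\alpha,x)$, to reduce the ratio $H_{1}(\alpha,\alpha+\tfrac{3}{2}\pi)/H_{1}(\alpha,\alpha)$ to a purely analytic limit governed by the $G$-integrals already asymptotically controlled in Lemma \ref{lemma56} and Lemma \ref{lemma59}. Writing $c=\tfrac{3}{2}\pi$, the ratio factors as
\[
\frac{H_{1}(\alpha,\alpha+c)}{H_{1}(\alpha,\alpha)}=\left(1+\frac{c}{\alpha}\right)^{\alpha}\frac{F(\alpha,\alpha+c)}{F(\alpha,\alpha)}.
\]
The first factor tends to $e^{c}$ by the classical limit, so everything comes down to showing that the second factor tends to $e^{-c}$.

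For this I would apply the sandwich from Lemma \ref{lemma54}. Bounding the numerator above by $F_{2}$ and the denominator below by $F_{1}$ (and conversely for the reverse inequality) gives
\[
\frac{(2-2^{-\alpha})Z(\alpha+1)}{(2-2^{2-\alpha})Z(\alpha-1)}\cdot\frac{G(\alpha,\alpha+c)}{G(\alpha,\alpha)}\leq\frac{F(\alpha,\alpha+c)}{F(\alpha,\alpha)}\leq\frac{(2-2^{2-\alpha})Z(\alpha-1)}{(2-2^{-\alpha})Z(\alpha+1)}\cdot\frac{G(\alpha,\alpha+c)}{G(\alpha,\alpha)}.
\]
As $\alpha\to\infty$ the zeta values $Z(\alpha\pm 1)$ both tend to $1$ and the prefactors $2-2^{-\alpha}$, $2-2^{2-\alpha}$ both tend to $2$, so the two outer constants collapse to $1+o(1)$. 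Lemma \ref{lemma59} then supplies $G(\alpha,\alpha+c)/G(\alpha,\alpha)=e^{-c}(1+O(\alpha^{-1}))$, and squeezing yields $F(\alpha,\alpha+c)/F(\alpha,\alpha)=e^{-c}(1+o(1))$.

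Assembling the two factors gives
\[
\frac{H_{1}(\alpha,\alpha+c)}{H_{1}(\alpha,\alpha)}=\left(1+\frac{c}{\alpha}\right)^{\alpha}e^{-c}(1+o(1))\longrightarrow e^{c}\cdot e^{-c}=1,
\]
which is exactly the claim. There is no real obstacle here: the proof is essentially a bookkeeping exercise once the key ingredients (\ref{Properties1}a), Lemma \ref{lemma54}, and Lemma \ref{lemma59} are in place. The only mild care point is verifying that the sandwich bounds from Lemma \ref{lemma54} are \emph{both} of the form constant$\cdot G(\alpha,\cdot)$ with the same $G$-factor, so that the potentially problematic $\alpha$-dependent multipliers cancel in the ratio up to a harmless $1+o(1)$.
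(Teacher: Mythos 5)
Your argument is correct and follows essentially the same route as the paper: factor the ratio via $H_{1}(\alpha,x)=x^{\alpha}F(\alpha,x)$, squeeze $F(\alpha,\alpha+\tfrac{3}{2}\pi)/F(\alpha,\alpha)$ between $F_{1}/F_{2}$ and $F_{2}/F_{1}$ using Lemma \ref{lemma54} so that the zeta- and power-of-two prefactors collapse to $1+o(1)$, and then invoke Lemma \ref{lemma59} for the $G$-ratio. The paper merely makes the "zeta values tend to $1$" step explicit with the elementary bound $1<Z(\alpha)<1+2^{-\alpha}+\tfrac{1}{\alpha-1}2^{1-\alpha}$; otherwise the two proofs coincide.
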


\begin{proof}
Using (\ref{Properties1}a), we obtain
\begin{equation}
\frac{H_{1}\left( \alpha,\alpha+\frac{3}{2}\pi\right) }{H_{1}\left(
\alpha,\alpha\right) }=\left( 1+\frac{\frac{3}{2}\pi}{\alpha}\right)
^{\alpha}\frac{F\left( \alpha,\alpha+\frac{3}{2}\pi\right) }{F\left(
\alpha,\alpha\right) }.\label{Formel3}%
\end{equation}
Next, using Lemma \ref{lemma54} together with a standard estimate for the
zeta function, we establish%
\begin{align}
\frac{F\left( \alpha,\alpha+\frac{3}{2}\pi\right) }{F\left( \alpha
,\alpha\right) } & \leq\frac{F_{2}\left( \alpha,\alpha+\frac{3}{2}%
\pi\right) }{F_{1}\left( \alpha,\alpha\right) }\nonumber\\
& \leq\frac{2-\frac{4}{2^{\alpha}}}{2-\frac{1}{2^{\alpha}}}\left( 1+\frac
{1}{\alpha-2}\right) \frac{G\left( \alpha,\alpha+\frac{3}{2}\pi\right)
}{G\left( \alpha,\alpha\right) },\label{Formel4}%
\end{align}
and%
\begin{align}
\frac{F\left( \alpha,\alpha+\frac{3}{2}\pi\right) }{F\left( \alpha
,\alpha\right) } & \geq\frac{F_{1}\left( \alpha,\alpha+\frac{3}{2}%
\pi\right) }{F_{2}\left( \alpha,\alpha\right) }\nonumber\\
& \geq\frac{2-\frac{1}{2^{\alpha}}}{2-\frac{4}{2^{\alpha}}}\left( 1-\frac
{1}{\alpha-1}\right) \frac{G\left( \alpha,\alpha+\frac{3}{2}\pi\right)
}{G\left( \alpha,\alpha\right) }.\label{Formel5}%
\end{align}
Now, combining (\ref{Formel3}), (\ref{Formel4}), (\ref{Formel5}) together with Lemma \ref{lemma59}, 
we establish the result.
\end{proof}

\begin{proof}[Proof of Theorem \ref{Theorem53}.]
For $\alpha\geq2$, it follows from Theorem \ref{Theorem41} that
\begin{equation}
\left\Vert H\left( \alpha,\cdot\right) \right\Vert _{L_{\infty}\left[
0,\infty\right) }\leq\left\Vert H_{1}\left( \alpha,\cdot\right) \right\Vert
_{L_{\infty}\left[ 0,\infty\right) }\leq\frac{C\left( \alpha\right)
}{1+2\alpha}\left( 1+\frac{2}{\sqrt{\alpha}}\right) .\label{Formel6}%
\end{equation}
For the reverse side, let $\varepsilon>0$ be arbitrary small. From Lemma
\ref{lemma510} we can find some $\alpha_{2}>0$, such that for $\alpha\geq
\alpha_{2}$,
\[
H_{1}\left( \alpha,\alpha+\frac{3}{2}\pi\right) \geq H_{1}\left(
\alpha,\alpha\right) \left( 1-\varepsilon\right) .
\]
Using Lemma \ref{lemma58}, Theorem \ref{Theorem52} and Theorem \ref{Theorem41},
we further obtain for $\alpha\geq \max\left(2, \alpha_{0}, \alpha_{2} \right) $ the estimate
\begin{align}
\left\Vert H\left( \alpha,\cdot\right) \right\Vert _{L_{\infty}\left[
0,\infty\right) } & \geq\left\vert H\left( \alpha,\beta\right) \right\vert
=H_{1}\left( \alpha,\beta\right) \nonumber\\
& \geq H_{1}\left( \alpha,\alpha+\frac{3}{2}\pi\right) \nonumber\\
& \geq H_{1}\left( \alpha,\alpha\right) \left( 1-\varepsilon\right)
\nonumber\\
& \geq\frac{C\left( \alpha\right) }{1+2\alpha}\left( 1-\frac{1}%
{\sqrt{\alpha}}\right) \left( 1-\varepsilon\right) .\label{Fromel7}%
\end{align}
Finally, combining (\ref{Formel6}) together with (\ref{Fromel7}), establishes
the result and we are finished.
\end{proof}

\section{Approximation polynomials in $L_{\infty}$}

\noindent
This section is devoted to an explicit construction for near best
approximation polynomials to $\left\vert x\right\vert ^{\alpha},\alpha>0$ in
the $L_{\infty}$ norm. The construction involves the polynomials
$P_{n}^{\left( 1\right) }$ and $P_{n}^{\left( 2\right) }$ together with
the Chebyshev polynomials $T_{n}$. The construction method is based on
numerical results. The
resulting formulas could indicate a general possible approach and structure
for the Bernstein constants $\Delta_{\alpha,\infty}$.

\medskip
\noindent
Let $\alpha>0$ be not an even integer.

\medskip
\noindent
First, let us collect some details on the interpolating polynomials
$P_{2n}^{\left( 1\right) }$. Recall, that the interpolation points are given
by $x_{j}^{\left( 2n\right) }=\cos\left( \left( j-\frac{1}{2}\right)
\pi/2n\right) $ for $j=1,2,\ldots,2n$ and $x_{0}^{\left( 2n\right) }=0$.
From Ganzburg (\cite{Ganzburg1}, Formulas 2.1, 2.7 and 4.14) it follows
\begin{align*}
& \lim_{n\rightarrow\infty}\left( 2n\right) ^{\alpha}\left\Vert \left\vert
x\right\vert ^{\alpha}-P_{2n}^{\left( 1\right) }\right\Vert _{L_{\infty
}\left[ -1,1\right] }\\
& =\frac{2}{\pi}\left\vert \sin\frac{\pi\alpha}{2}\right\vert \left\Vert
\int_{0}^{\infty}\frac{t^{\alpha-1}}{\cosh\left( t\right) }\frac{x^{2}\cos
x}{x^{2}+t^{2}}dt\right\Vert _{_{L_{\infty}\left[ 0,\infty\right) }}\\
& =\left\Vert \left\vert x\right\vert ^{\alpha}-G_{\alpha}\right\Vert
_{L_{\infty}\left[ 0,\infty\right) }\\
& =\frac{2}{\pi}\left\vert \sin\frac{\pi\alpha}{2}\right\vert \int
_{0}^{\infty}\frac{t^{\alpha-1}}{\cosh\left( t\right) }dt,
\end{align*}
where
\begin{equation}
G_{\alpha}\left( x\right) =\left\vert x\right\vert ^{\alpha}-\frac{2}{\pi
}\sin\frac{\pi\alpha}{2}\int_{0}^{\infty}\frac{t^{\alpha-1}}{\cosh\left(
t\right) }\frac{x^{2}\cos x}{x^{2}+t^{2}}dt \label{GanzburgEntire1}%
\end{equation}
is an entire function of exponential type $1$ that interpolates $\left\vert
x\right\vert ^{\alpha}$ at the nodes $\left\{ \left( k+\frac{1}{2}\right)
\pi:k\in\mathbb{Z}\right\} \cup\left\{ 0\right\} $. There also exists
(\cite{Ganzburg1}, Formula 4.15) a representation for $G_{\alpha}$ as an
interpolating series, similar to formula (\ref{Entire5}) in Theorem
\ref{Theorem33}.

\medskip
\noindent
By an analogue method as that was used in the proof for
Theorem \ref{Theorem32} one can show that uniformly on compact subsets of
$\left[ 0,\infty\right) $ we have the scaled limit%
\begin{equation}
\lim_{n\rightarrow\infty}\left( 2n\right) ^{\alpha}P_{2n}^{\left( 1\right)
}\left( \frac{x}{2n}\right) =G_{\alpha}\left( x\right) .
\label{GanzburgEntire2}%
\end{equation}

\noindent
Now, based on numerical computations, we made the following
observations. For all $\alpha>0$ not an even integer we find that,
beginning with the second positive note, all interpolation points of the best
approximation polynomials $P_{2n}^{\ast}$ are located somewhere between two
consecutive interpolation points for the $P_{2n}^{\left( 1\right) }$ and
$P_{2n}^{\left( 2\right) }$ polynomials. See Figure \ref{figure5}.

\begin{figure}[th]
\begin{center}
\includegraphics[width=0.75\textwidth]{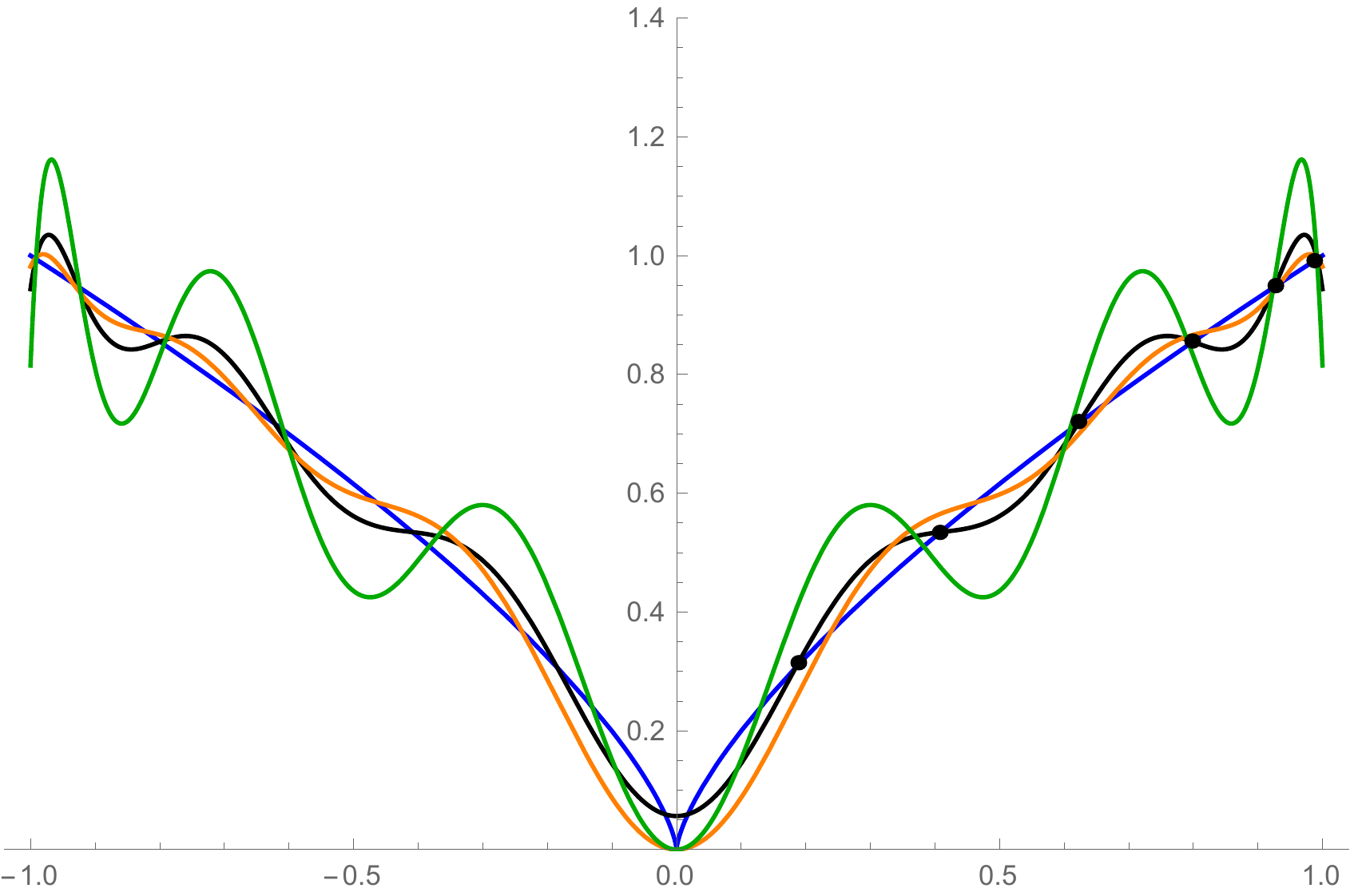}
\end{center}
\caption{Interpolation points for the best approximation to $\left\vert
x\right\vert ^{\alpha}$. }%
\label{figure5}%
\end{figure}

\medskip
\noindent
It is well known that $\left[ 1,x,\ldots,x^{n};x^{\alpha / 2} \right]$ is an
hypernormal Haar space of dimension $n+2$ on the interval $\left[ 0,1\right]
$, see (\cite{Varga2}, p. 199). Consequently it follows that we have always an
alternation point at $x=0$. Thus we cannot expect to perform in the quality of
best approximation solely by using the polynomials $P_{2n}^{\left( 1\right)
}$ and $P_{2n}^{\left( 2\right) }$, since both of them interpolate at $x=0$.
Thus we consider the following polynomials
\begin{align}
P_{2n}^{\left( 3 \right)} \left( x\right)  & =c_{1,\alpha}P_{2n}^{\left( 1\right) }\left(
x\right) +\left( 1-c_{1,\alpha}\right) P_{2n}^{\left( 2\right) }\left(
x\right) \nonumber\\
& +\frac{2}{\pi}\sin\frac{\pi\alpha}{2}c_{2,\alpha}\frac{\left( -1\right)
^{n}}{\left( 2n\right) ^{\alpha}}\frac{T_{2n+1}\left( x\right) }{\left(
2n+1\right) x}, \label{Nearbest}%
\end{align}
where $c_{1,\alpha}$ and $c_{2,\alpha}$ are numerical constants, depending
only on $\alpha$. As we see later, for good choices of $c_{1,\alpha}$ and
$c_{2,\alpha}$ the linear combination of $P_{2n}^{\left( 1\right) }$ and
$P_{2n}^{\left( 2\right) }$ results in a polynomial with almost all the same
interpolation points as its best approximation $P_{2n}^{\ast}$, while at the
same time the last term in (\ref{Nearbest}) establishes the alternation
property at $x=0$ and leaves the new interpolation points largely unchanged.

\medskip
\noindent
Since we are interested into the asymptotic behavior of the
polynomials $P_{2n}^{\left( 3 \right)}$ we directly pass to the resulting scaled limit. From
Theorem \ref{Theorem32}, formulas (\ref{Entire4}), (\ref{GanzburgEntire1}), (\ref{GanzburgEntire2}) and Lemma \ref{lemma36}, it follows that uniformly on compact subsets of
$\left[ 0,\infty\right) $ we have%
\begin{align}
\lim_{n\rightarrow\infty}\left( 2n\right) ^{\alpha}P_{2n}^{\left( 3 \right)} \left( \frac
{x}{2n}\right)  & =\left\vert x\right\vert ^{\alpha}-\frac{2}{\pi}\sin
\frac{\pi\alpha}{2}\left( c_{1,\alpha}\int_{0}^{\infty}\frac{t^{\alpha-1}%
}{\cosh t}\frac{x^{2}\cos x}{x^{2}+t^{2}}dt\right. \nonumber\\
& +\left. \left( 1-c_{1,\alpha}\right) \int_{0}^{\infty}\frac{t^{\alpha}%
}{\sinh t}\frac{x\sin x}{x^{2}+t^{2}}dt-c_{2,\alpha}\frac{\sin x}{x}\right) .
\label{BestEntire}%
\end{align}
Thus, we try to numerically minimize the quantity%
\begin{align*}
& \left\Vert c_{1,\alpha}\int_{0}^{\infty}\frac{t^{\alpha-1}}{\cosh t}%
\frac{x^{2}\cos x}{x^{2}+t^{2}}dt\right. \\
& \left. +\left( 1-c_{1,\alpha}\right) \int_{0}^{\infty}\frac{t^{\alpha}%
}{\sinh t}\frac{x\sin x}{x^{2}+t^{2}}dt-c_{2,\alpha}\frac{\sin x}%
{x}\right\Vert _{L_{\infty}\left[ 0,\infty\right) }.
\end{align*}
For the moment, we cannot present an explicit formula for the constants $c_{1,\alpha}$ and
$c_{2,\alpha}$, but based on numerical calculations, we present the following
numerical table.
\[%
\begin{tabular}
[c]{|c|c|c||c|c|c|}\hline
$\alpha$ & $c_{1,\alpha}$ & $c_{2,\alpha}$ & $\alpha$ & $c_{1,\alpha}$ &
$c_{2,\alpha}$\\\hline
0.1 & 0.43 & 4.40 & 1.1 & 0.25 & 0.44\\
0.2 & 0.39 & 2.05 & 1.2 & 0.22 & 0.42\\
0.3 & 0.36 & 1.32 & 1.3 & 0.22 & 0.41\\
0.4 & 0.34 & 0.97 & 1.4 & 0.21 & 0.41\\
0.5 & 0.33 & 0.78 & 1.5 & 0.19 & 0.41\\
0.6 & 0.31 & 0.65 & 1.6 & 0.17 & 0.42\\
0.7 & 0.30 & 0.57 & 1.7 & 0.15 & 0.44\\
0.8 & 0.28 & 0.51 & 1.8 & 0.12 & 0.46\\
0.9 & 0.27 & 0.48 & 1.9 & 0.10 & 0.49\\
1.0 & 0.26 & 0.45 & & & \\\hline
\end{tabular}
\
\]

\noindent
Using these numerical values, we present some illustrations for the
$P_{n}^{\left( 3 \right)}$ polynomials from (\ref{Nearbest}). In Figure \ref{Figure6} we present
the polynomials $P_{4}^{\left( 3 \right)}$, $P_{8}^{\left( 3 \right)}$ together with the best approximations
$P_{4}^{\ast}$, $P_{8}^{\ast}$ and $\alpha=0.5$. The same is done in Figure
\ref{Figure7} for $\alpha=1.0$.

\begin{figure}[th]
\begin{center}
\includegraphics[width=0.45\textwidth]{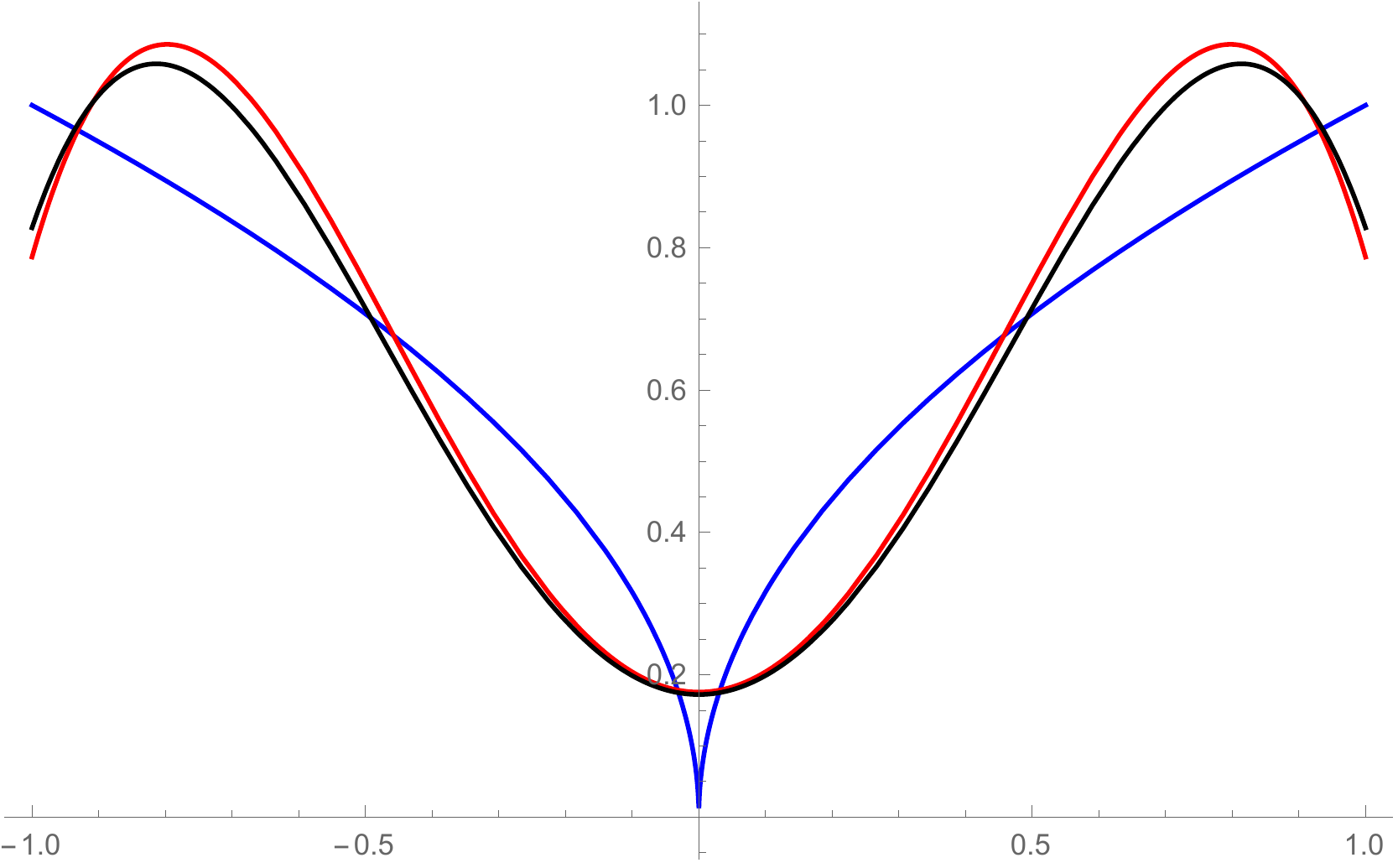} \hfill
\includegraphics[width=0.45\textwidth]{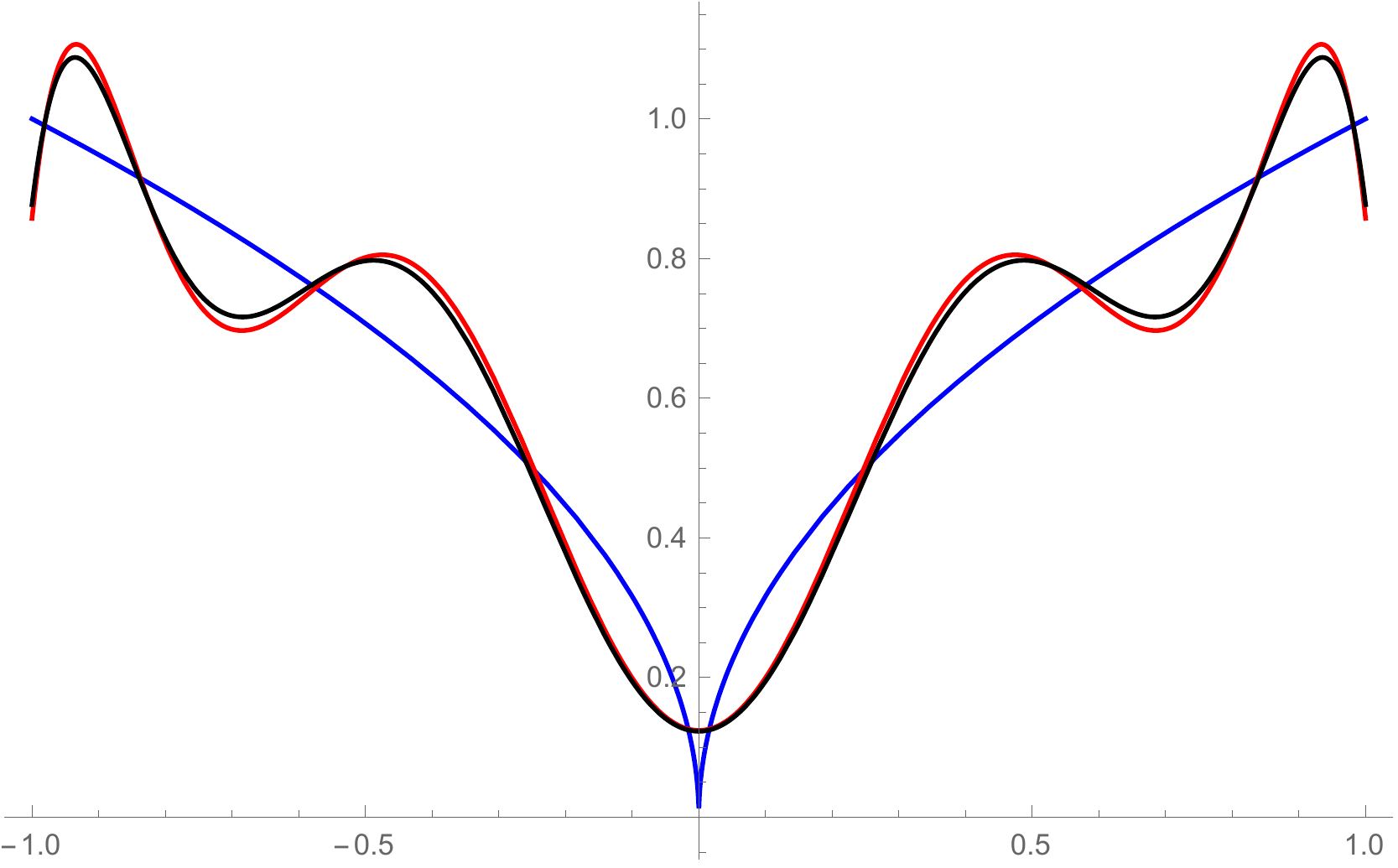}
\end{center}
\caption{$\alpha=0.5$. Polynomials $P_{4}^{\left( 3 \right)}$, $P_{4}^{\ast}$ and $P_{8}^{\left( 3 \right)}$,
$P_{8}^{\ast}$.}%
\label{Figure6}%
\end{figure}

\begin{figure}[th]
\begin{center}
\includegraphics[width=0.45\textwidth]{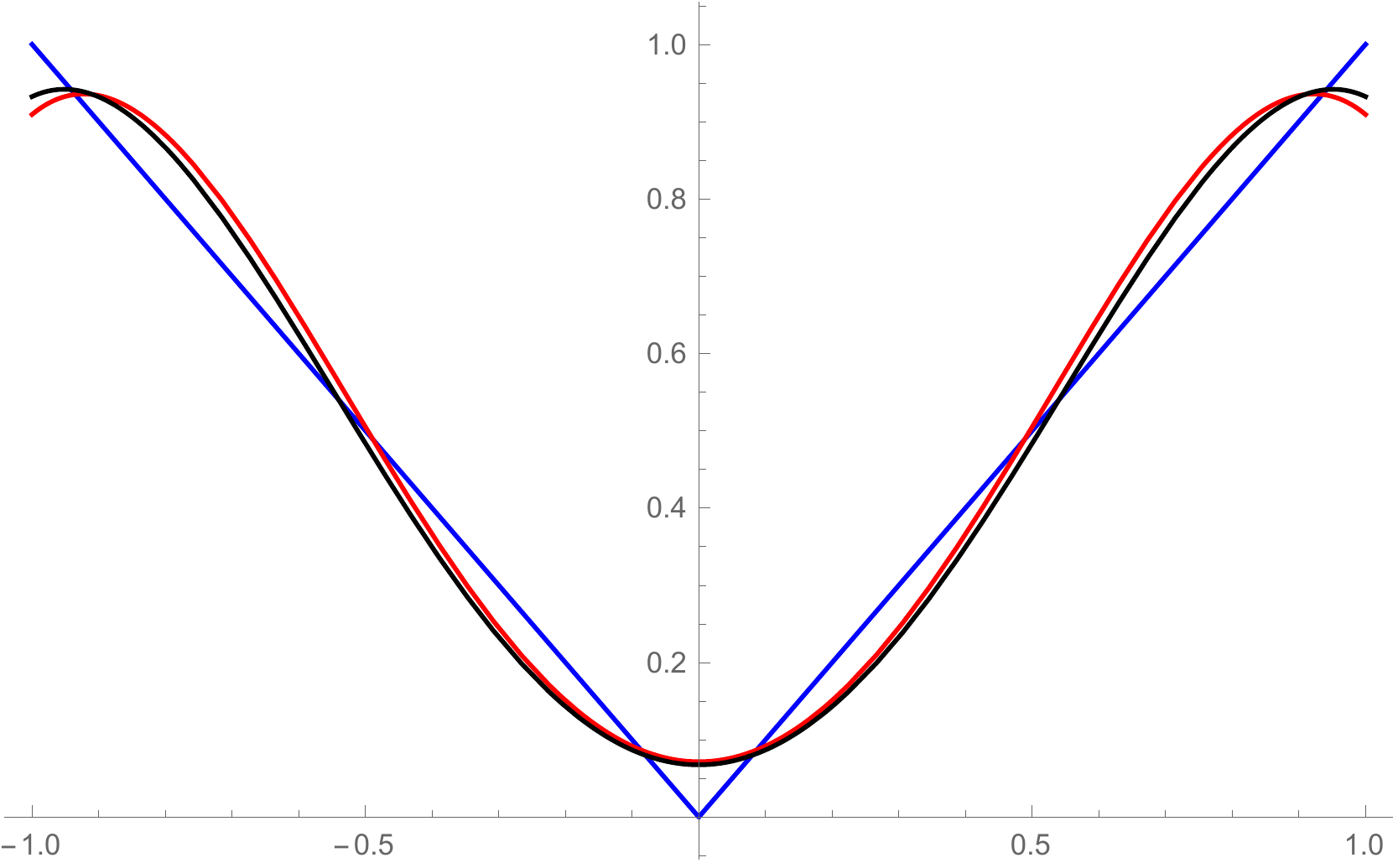} \hfill
\includegraphics[width=0.45\textwidth]{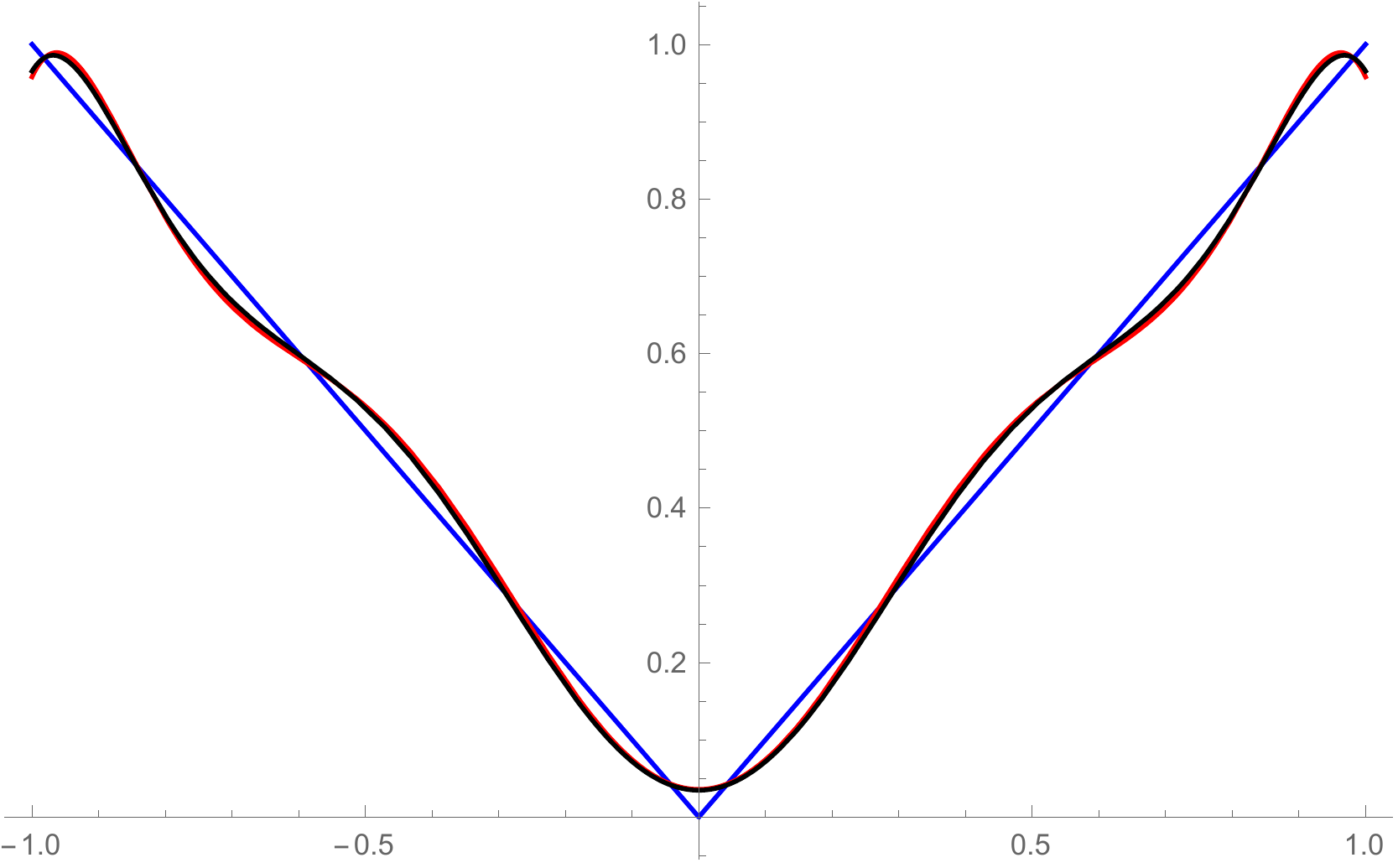}
\end{center}
\caption{$\alpha=1.0$. Polynomials $P_{4}^{\left( 3 \right)}$, $P_{4}^{\ast}$ and $P_{8}^{\left( 3 \right)}$,
$P_{8}^{\ast}$.}%
\label{Figure7}%
\end{figure}

\medskip\noindent We also tried to find some approximations for the minimizing
best entire functions $H_{\alpha}^{\ast}$ defined by%
\begin{align*}
& \Delta_{\infty,\alpha}=\left\Vert \left\vert x\right\vert ^{\alpha
}-H_{\alpha}^{\ast}\right\Vert _{L_{\infty}\left[ 0,\infty\right) }\\
& =\inf\left\{ \left\Vert \left\vert x\right\vert ^{\alpha}-H\right\Vert
_{L_{\infty}\left( \mathbb{R}\right) }:H\text{ is entire of exponential
type}\leq1\right\} .
\end{align*}
Especially we are interested into the locations of its corresponding
interpolation points. Recall, that from (\cite{Lubinsky1}, \cite{Lubinsky2}) it
follows, that uniformly on compact subsets of $\mathbb{C}$ we have%
\begin{equation}
\lim_{n\rightarrow\infty}\left( 2n\right) ^{\alpha}P_{2n}^{\ast}\left(
\frac{z}{2n}\right) =H_{\alpha}^{\ast}\left( z\right) , \label{numapprox}%
\end{equation}
There is also a representation for $H_{\alpha}^{\ast}$ as an interpolation
series with (unknown) interpolation points $0<x_{1}^{\ast}<x_{2}^{\ast}%
<x_{3}^{\ast}<\cdots$. However, it is known (\cite{Lubinsky2}, Theorem 1.1)
that%
\[
x_{j}^{\ast}\in\left[ \left( j-\frac{3}{2}\right) \pi,\left( j-\frac{1}%
{2}\right) \pi\right] ,\quad\forall j\geq2.
\]
Moreover, from (\cite{Lubinsky2}, Formulas 1.6 and 1.7) it follows that there
exists alternation points $0=y_{0}^{\ast}<y_{1}^{\ast}<y_{2}^{\ast}<\cdots$
with%
\[
\left\vert y_{j}^{\ast}\right\vert ^{\alpha}-H_{\alpha}^{\ast}\left( \pm
y_{j}^{\ast}\right) =\left( -1\right) ^{j+\overline{\alpha/2}}\left\Vert
\left\vert x\right\vert ^{\alpha}-H_{\alpha}^{\ast}\right\Vert _{L_{\infty
}\left( \mathbb{R}\right) },
\]
where $\overline{\alpha/2}$ is the least integer exceeding $\alpha/2$. For the
alternation points it is also known that%
\[
y_{j}^{\ast}\in\left[ \left( j-1\right) \pi,j\pi\right] ,\quad\forall
j\geq1.
\]
We use formula (\ref{BestEntire}) as an approximation for $H_{\alpha}^{\ast}$.
In Figure \ref{Figure8} we present some illustrations from (\ref{BestEntire})
for $\alpha=0.5$ and $\alpha=1.0$. In Figure \ref{Figure9} we illustrate the near
equioscillating behavior of the error term in (\ref{BestEntire}), again for
$\alpha=0.5$ and $\alpha=1.0$, and we compare the maximal error magnitude with the
corresponding numerical values for the Bernstein constants%
\begin{align*}
\Delta_{\infty, 0.5} & =0.348648\ldots,\\
\Delta_{\infty, 1} & =0.280169\ldots
\end{align*}
The values for the Bernstein constants are taken from (\cite{Varga2}, Table 1.1).

\begin{figure}[th]
\begin{center}
\includegraphics[width=0.45\textwidth]{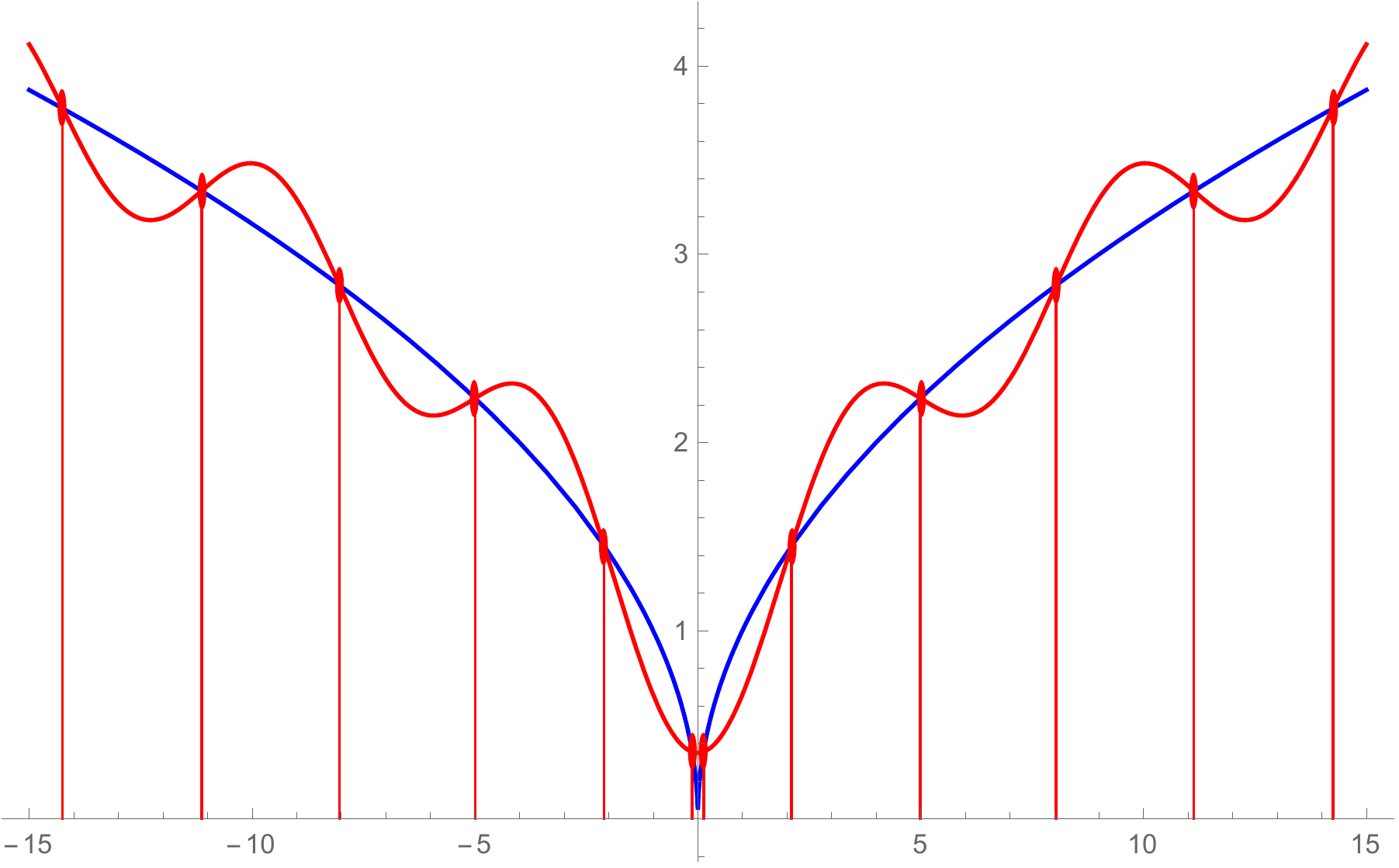} \hfill
\includegraphics[width=0.45\textwidth]{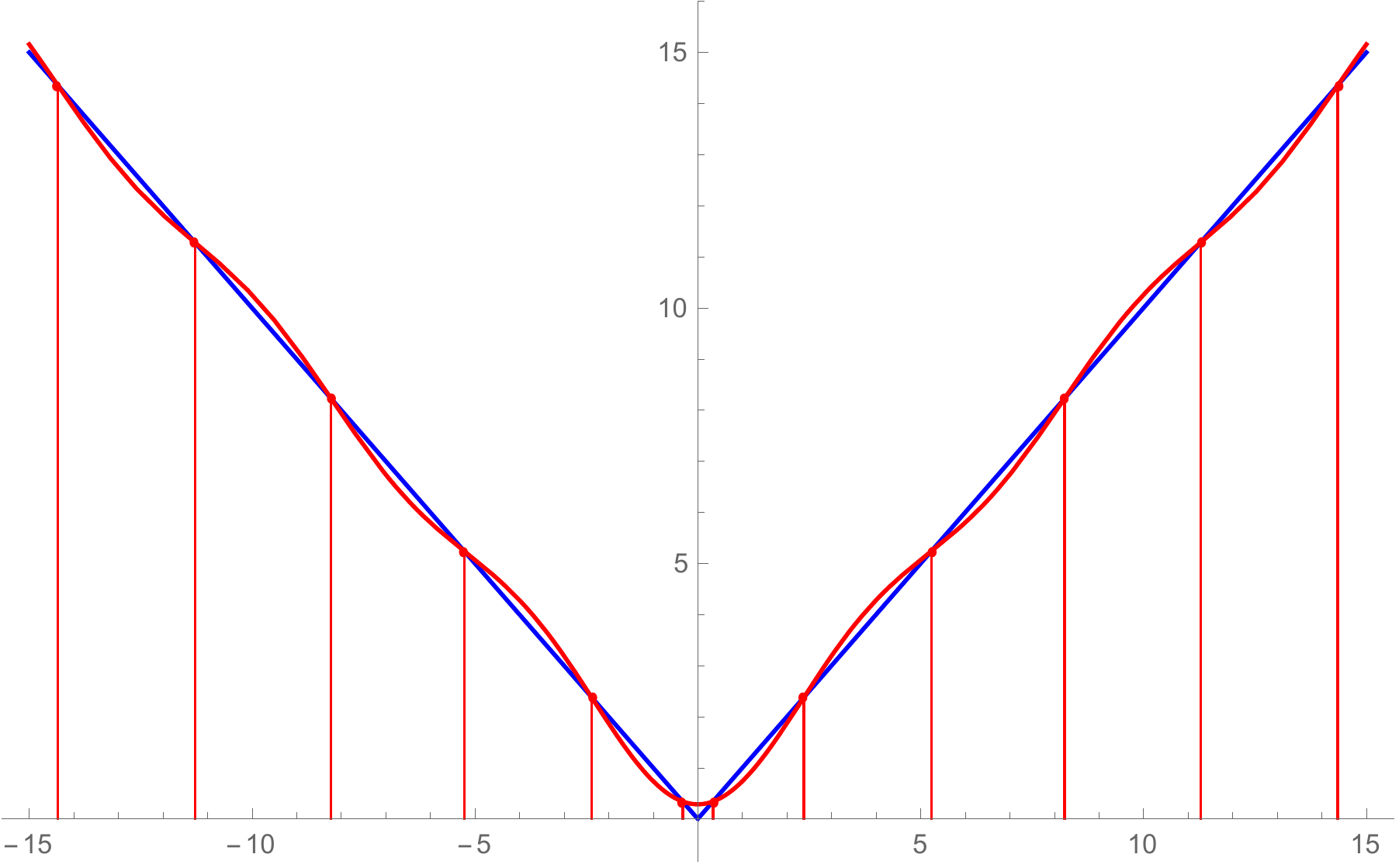}
\end{center}
\caption{Approximations for best entire functions $H_{\alpha}^{\ast}$ of
exponential type $1$.}%
\label{Figure8}%
\end{figure}

\begin{figure}[th]
\begin{center}
\includegraphics[width=0.45\textwidth]{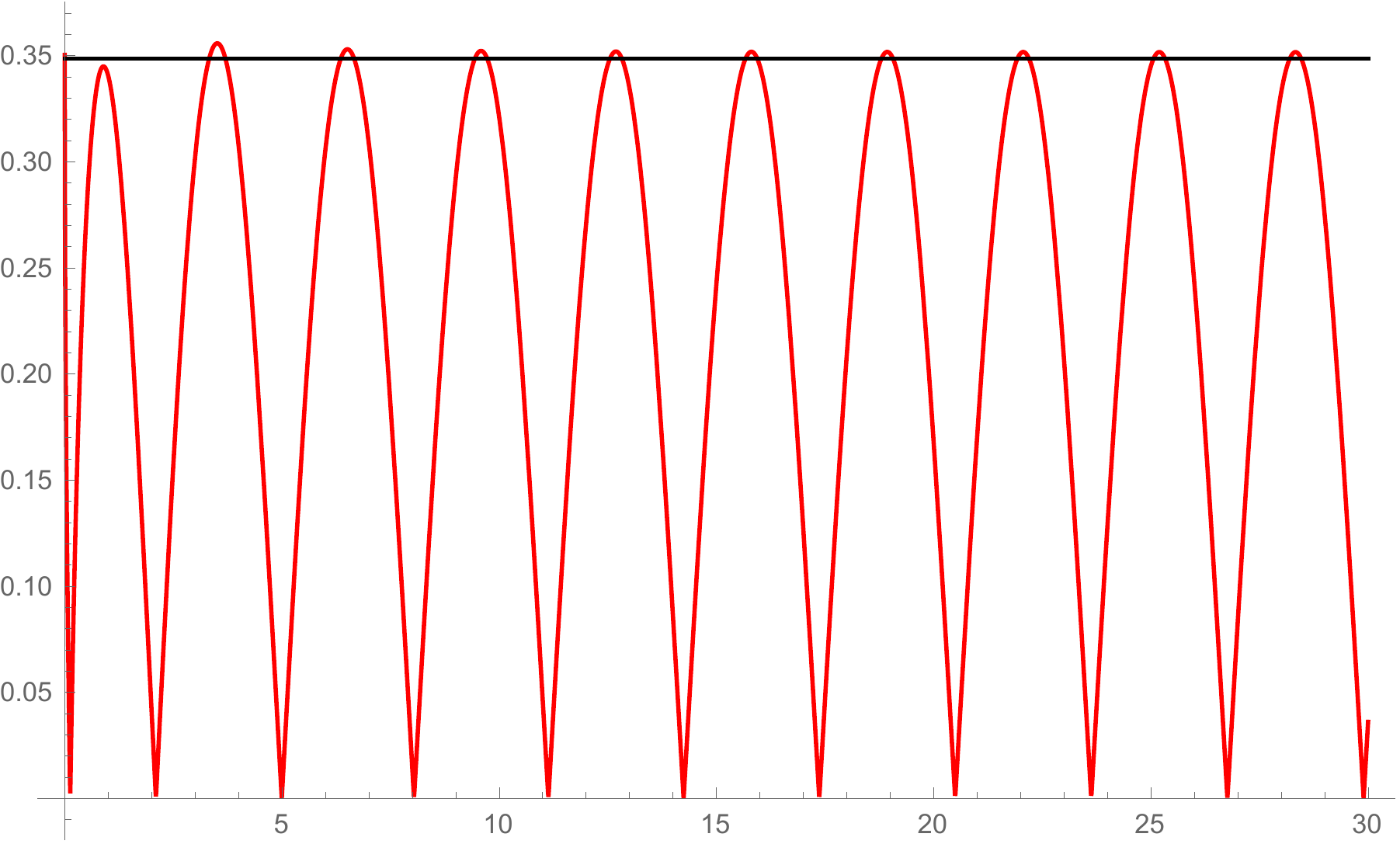} \hfill
\includegraphics[width=0.45\textwidth]{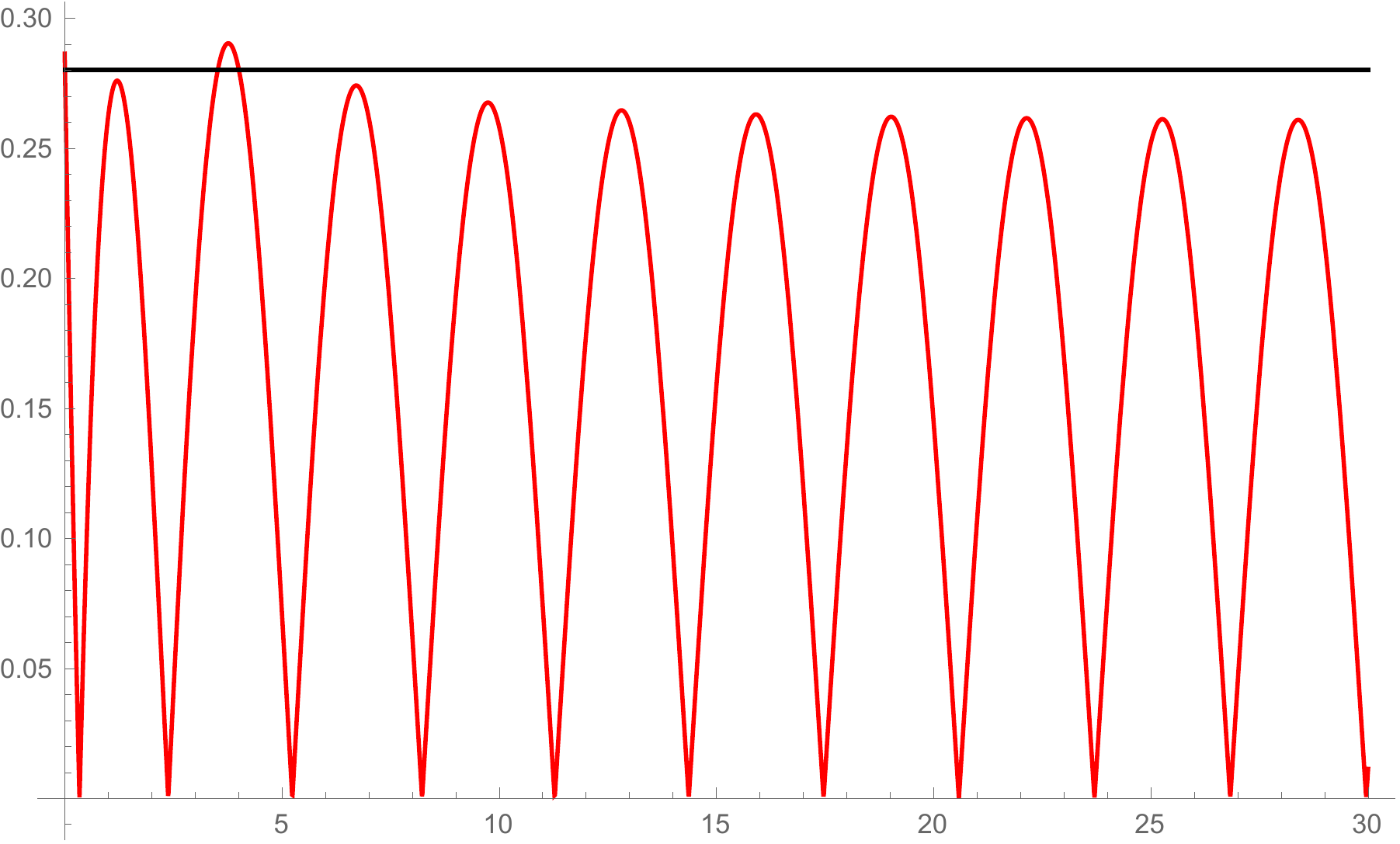}
\end{center}
\caption{Nearly equioscillation property of approximation for $H_{\alpha}^{\ast}$ together with
$\Delta_{\infty, 0.5}$ and $\Delta_{\infty, 1}$.}%
\label{Figure9}%
\end{figure}

\medskip\noindent In the following table we present the approximations for the
best interpolation points $x_{j}^{\ast}$ for $j=1,\ldots,10$ from (\ref{BestEntire}), respectively from Figure \ref{Figure8}.
\[%
\begin{tabular}
[c]{|c|c|c|c|c|c|c|c|c|c|c|}\hline
$\alpha$ & $x_{1}^{\ast}$ & $x_{2}^{\ast}$ & $x_{3}^{\ast}$ & $x_{4}^{\ast}$ &
$x_{5}^{\ast}$ & $x_{6}^{\ast}$ & $x_{7}^{\ast}$ & $x_{8}^{\ast}$ &
$x_{9}^{\ast}$ & $x_{10}^{\ast}$\\\hline
0.5 & 0.13 & 2.10 & 4.99 & 8.04 & 11.13 & 14.25 & 17.37 & 20.50 & 23.63 &
26.76\\
0.8 & 0.25 & 2.30 & 5.15 & 8.16 & 11.22 & 14.32 & 17.43 & 20.55 & 23.67 &
26.80\\
1.0 & 0.34 & 2.38 & 5.24 & 8.23 & 11.28 & 14.36 & 17.47 & 20.58 & 23.70 &
26.83\\\hline
\end{tabular}
\]

\noindent The last table suggests that, for small positive values $\alpha$, all
interpolation points are slightly shifted to the left. Apparently this effect
becomes greater for those interpolation points which are located closer to the
origin. On the other hand, the values suggest that
\[
x_{n+1}^{\ast}-x_{n}^{\ast}\rightarrow\pi,\quad n\rightarrow\infty,
\]
from below.

\medskip
\noindent
Finally, we remark that the overall quality of the
$P_{n}^{\left(  3\right)  }$ polynomials appears to be very encouraging in search for some representations of the Bernstein constants. Their approximation properties with respect to the corresponding best approximation polynomials $P_{n}^{\ast}$ are of high quality, even for small values of $n$. Thus, formula \ref{BestEntire} though it is at the present time not in its full explicit form, appears to be an important step towards a possible representation for the Bernstein constants $\Delta_{\infty,\alpha}$.

Michael Revers

Department of Mathematics

University Salzburg

Hellbrunnerstrasse 34

A-5020 Salzburg

AUSTRIA

E-Mail: michael.revers@sbg.ac.at
\end{document}